\newcommand{\R}{\mathbb{R}}
\newcommand{\C}{\mathbb{C}}
\newcommand{\Q}{\mathbb{Q}}
\newcommand{\Z}{\mathbb{Z}}
\newcommand{\supp}{\mathop{\mathrm{supp}}\nolimits}
\newcommand{\T}{\mathcal{T}}
\newcommand{\LL}{\mathcal{L}}
\newcommand{\m}{\mathbf{m}}
\newcommand{\codim}{\mathop{\mathrm{codim}}\nolimits}
\newcommand{\D}{\mathbf{D}}
\newtheorem{theorem}{Theorem}[section]
\newtheorem{cor}{Corollary}[section]
\newtheorem{lem}{Lemma}[section]
\newtheorem*{SKL}{Sabitov's Key Lemma}
\theoremstyle{definition}
\newtheorem{defin}{Definition}[section]
\newtheorem{remark}{Remark}[section]
\newtheorem{quest}{Question}[section]
\author{Alexander A. Gaifullin}
\thanks{The work was partially supported by the Russian Foundation for Basic Research (projects 10-01-92102 and 11-01-00694), by a grant of the President of the Russian Federation (project NSh-5413.2010.1), by a grant from Dmitri Zimin's ``Dynasty'' foundation and by a programme of the Branch of Mathematical Sciences of the Russian Academy of Sciences.}
\title{Sabitov polynomials for volumes of polyhedra in four dimensions}
\date{}
\address{Steklov
Mathematical Institute (Moscow), Moscow State University, \textit{and} Institute for Information Transmission Problems (Moscow)}
\email{gaifull@higeom.math.msu.su}
\begin{document}

\begin{abstract}
In 1996 I.\,Kh.~Sabitov proved that the volume of a simplicial polyhedron in a $3$-dimensional Euclidean space is a root of certain polynomial with coefficients depending on the combinatorial type  and on edge lengths of the polyhedron only. Moreover, the coefficients of this polynomial are polynomials in edge lengths of the polyhedron. This result implies that the volume of a simplicial polyhedron with fixed combinatorial type and edge lengths can take only finitely many values. In particular, this yields that the volume of a flexible polyhedron in a $3$-dimensional Euclidean space is constant. Until now it has been unknown whether these results can be obtained in dimensions greater than~$3$. In this paper we prove that all these results hold for polyhedra in a $4$-dimensional Euclidean space. 
\end{abstract}

\maketitle

\section{Introduction}\label{section_intro}

In 1996 I.\,Kh.~Sabitov~\cite{Sab96} proved that the volume of a (not necessarily convex) simplicial polyhedron in~$\R^3$ is a root of certain polynomial whose coefficients are polynomials in the squares of edge lengths of the polyhedron. To give rigorous formulation of this result we need to specify what is understood under a simplicial polyhedron. Let $K$ be a triangulation of a closed oriented surface. Then an \textit{oriented polyhedron} (or a \textit{polyhedral surface}\/) of combinatorial type~$K$ is a mapping $P:K\to \R^3$ whose restriction to every simplex of~$K$ is linear. Notice that we do not require $P$ to be an embedding, so we allow a polyhedron to be degenerate and self-intersected.

By $[uvw]$ we denote the oriented triangle of~$K$ with vertices~$u$, $v$, and~$w$ and the orientation given by the prescribed order of vertices. Then the triangles $[uvw]$, $[vwu]$, and $[wuv]$ coincide and $[vuw]$ is the same triangle with the opposite orientation. An oriented triangle~$[uvw]$ is said to be positively oriented if its orientation coincides with the given orientation of~$K$. The set of all positively oriented triangles of~$K$ will be denoted by~$K_+$. For points $A_0,A_1,A_2,A_3\in\R^3$, we denote by~$[A_0A_1A_2A_3]$ the oriented tetrahedron with vertices $A_0,A_1,A_2,A_3$ and by $V([A_0A_1A_2A_3])$ its oriented volume. (Notice that the tetrahedron~$[A_0A_1A_2A_3]$ may be degenerate.) 

Choose an arbitrary base point $O\in\R^3$. The \textit{generalized volume} of a polyhedron~$P:K\to\R^3$ is defined by 
\begin{equation}\label{eq_vol3}
V(P)=\sum_{[uvw]\in K_+}V([O\,P(u)P(v)P(w)]).
\end{equation}
Obviously, the generalized volume of~$P$ is independent of the choice of the base point $O\in\R^3$.

\begin{remark}
If $P:K\to\R^3$ is an embedding, then $V(P)$ coincides up to sign with the usual volume of the domain in~$\R^3$ bounded by the surface~$P(K)$. So in a general case the generalized volume~$V(P)$ should be understood as the volume of some ``generalized domain'' bounded by the singular surface~$P(K)$. This can be made rigorous in the following way. We have
\begin{equation}\label{eq_vol3m}
V(P)=\int_{\R^3}\lambda(x)\,dx,
\end{equation}
where $dx=dx_1dx_2dx_3$ is the standard measure in~$\R^3$ and $\lambda(x)$ is the algebraic intersection number of an arbitrary curve in~$\R^3$ going from~$x$ to the infinity and the oriented singular surface~$P(K)$. (Obviously, $\lambda$ is a piecewise constant function defined off the surface~$P(K)$.)
Thus a polyhedron $P:K\to\R^3$ should be thought about as a three-dimensional object ``bounded by the singular surface~$P(K)$'' rather than as a two-dimensional object.
\end{remark}

If $[uv]$ is an edge of~$K$, then we denote by~$l_{uv}(P)$ the square of the length of the edge~$[P(u)P(v)]$ of a polyhedron~$P:K\to\R^3$. By $l(P)$ we denote the set of all squared edge lengths~$l_{uv}(P)$, where $[uv]\in K$.

Recall that a polynomial is called \textit{monic\/} if its leading coefficient is equal to~$1$.

\begin{theorem}[I.\,Kh.~Sabitov~\cite{Sab96}]\label{theorem_main3}
Let $K$ be a triangulation of a closed oriented surface. Then there exists a monic  polynomial 
$$
Q(V,l)=V^{N}+a_1(l)V^{N-1}+a_2(l)V^{N-2}+\cdots+a_N(l)
$$
such that $a_j\in\mathbb{Q}[l]$ and for every oriented polyhedron $P:K\to\mathbb{R}^3$, the generalized volume~$V(P)$ satisfies the equation~$Q(V(P),l(P))=0$. Here $l$ denotes the set of variables~$l_{uv}=l_{vu}$ corresponding to edges~$[uv]\in K$.\end{theorem}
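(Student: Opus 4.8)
The plan is to prove Theorem~\ref{theorem_main3} by the method of \emph{place extensions} from valuation theory, which is the standard route to Sabitov-type results. The key algebraic fact I would exploit is the following: a polynomial relation $Q(V,l)=0$ with the monic form stated is equivalent to the assertion that $V(P)$ is \emph{integral} over the ring $\Q[l]$ of polynomials in the squared edge lengths. So the entire theorem reduces to showing that the generalized volume, regarded as an element of a suitable extension field of the field of rational functions in the coordinates of the vertices, is integral over $\Q[l]$. The payoff of this reformulation is that integrality can be checked \emph{locally}, place by place: an element of a field is integral over a subring if and only if it has nonnegative value under every valuation (equivalently, is finite at every place) that is nonnegative on the subring.

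First I would set up the algebraic framework. Place the vertices at indeterminate coordinates, so the squared edge lengths $l_{uv}$ become specific polynomials in these coordinates, and the generalized volume $V$ from~\eqref{eq_vol3} becomes a specific polynomial as well. Let $\LL$ denote the subfield of the field of all rational functions generated by the $l_{uv}$, and let $V$ be viewed inside the ambient field. The goal becomes: $V$ is integral over the ring generated by the $l_{uv}$ over $\Q$. By the valuative criterion, I would take an arbitrary place $\varphi$ of the ambient field that is finite (takes finite values) on every $l_{uv}$, and show that $\varphi(V)$ is finite as well. Equivalently, assuming toward a contradiction that some place sends $V$ to infinity while keeping all squared edge lengths finite, I would derive a contradiction from the rigidity of the metric data.

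The heart of the argument is the local analysis at such a place. Here I would use Sabitov's Key Lemma (available in the excerpt as \texttt{\string\newtheorem*\{SKL\}}), which controls how the volume of a single tetrahedron, or more generally of the pieces of the triangulation, behaves under a place that keeps the squared edge lengths finite. The strategy is to show that if all the $l_{uv}$ stay bounded (finite) at the place, then no vertex can ``escape to infinity'' in a way that blows up the volume: the edge-length constraints form a connected rigid system of equations over the $1$-skeleton of $K$, so finiteness of all pairwise squared distances along edges propagates, up to the overall isometry freedom, to finiteness of all the tetrahedral volumes $V([O\,P(u)P(v)P(w)])$ entering the sum~\eqref{eq_vol3}. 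Summing finitely many finite contributions gives that $\varphi(V)$ is finite, which is exactly the integrality we need.

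The main obstacle I expect is precisely this propagation step: controlling the volume when some vertices of the polyhedron degenerate or move off to infinity at the place, while all edge lengths remain finite. The subtlety is that finiteness of \emph{edge} lengths does not by itself pin down the configuration, since a valuation can still send whole subconfigurations to infinity along directions that keep each individual edge bounded only after a compensating rescaling of the place. The correct way to handle this is to normalize by the place's value group, choosing a uniformizer and tracking the leading orders of the vertex coordinates; one then verifies that the dominant terms in the expression for $V$ cannot have strictly negative order once every $l_{uv}$ has nonnegative order. Establishing this combinatorial-valuative estimate over an arbitrary triangulation $K$ is where the real work lies, and it is exactly the content that Sabitov's Key Lemma is designed to package. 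Once the local finiteness is in hand for every admissible place, the valuative criterion yields integrality, and clearing denominators produces the monic polynomial $Q(V,l)$ with coefficients in $\Q[l]$, completing the proof.
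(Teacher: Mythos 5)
Your opening reduction is correct and matches the standard route: the monic relation $Q(V,l)=0$ is equivalent to $V$ being integral over $\Q[l]$, and integrality can be tested by the valuative criterion (every place finite on the $l_{uv}$ must be finite on $V$). This is exactly the starting point of the Connelly--Sabitov--Walz proof and of the proof in this paper (Theorem~\ref{theorem_alg4} and the lemma from Lang quoted in section~\ref{section_nonconstr}). The gap is in what you do next. Your central claim --- that finiteness of all squared \emph{edge} lengths at a place ``propagates, up to the overall isometry freedom, to finiteness of all the tetrahedral volumes'' by rigidity of the $1$-skeleton --- is false. A place finite on every $l_{uv}$ can perfectly well be infinite on the squared distance between two vertices \emph{not} joined by an edge. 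Indeed, if all such diagonals were forced to be finite, every diagonal would be integral over $\Q[l]$, so a polyhedron with fixed edge lengths could realize only finitely many values of each diagonal --- contradicting the existence of flexible polyhedra (Bricard's octahedra, Connelly's sphere), whose combinatorial types the theorem must cover. Note also that the individual summands in~\eqref{eq_vol3} involve distances from the base point $O$ to the vertices, which are themselves uncontrolled diagonals; only the total sum can be controlled, never the terms separately, so no term-by-term finiteness argument can work.

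What is missing is the induction on the combinatorics of $K$, and it cannot be dispensed with. Every known proof (Sabitov's constructive one, the CSW proof, and the proof this paper obtains as the case $n=3$ of Lemma~\ref{lem_steph}) runs as follows: take a vertex $u$ of minimal degree with link $w_1,\ldots,w_r$; given a place $\varphi$ finite on all edge lengths, use Cayley--Menger determinant identities (the analogues of Lemmas~\ref{lem_place1} and~\ref{lem_place2}) to prove that $\varphi$ is finite on \emph{some} diagonal $l_{w_jw_{j+2}}$ --- not on all of them, which is generally false; then pass to a simpler cycle $Z'=Z-\partial\eta$ (a flip or vertex removal) whose edge set includes that diagonal, apply the \emph{inductive hypothesis} to conclude $\varphi(V_{Z'})$ is finite, and deduce $\varphi(V_Z)=\varphi(V_{Z'})+\varphi(V_{\partial\eta})$ is finite. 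Your appeal to Sabitov's Key Lemma to close this step is circular: as stated in the paper, the Key Lemma \emph{is} the inductive step of the theorem itself --- it presupposes the theorem for surfaces of smaller genus, fewer vertices, or smaller minimal degree --- and it is not, as you describe it, a place-theoretic bound on tetrahedron volumes. Invoking it as an available black box, without setting up the induction it belongs to and without proving the diagonal-finiteness step that drives it, assumes precisely what is to be proven.
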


The polynomial $Q$ is called a \textit{Sabitov polynomial} for polyhedra of combinatorial type~$K$. The proof of this theorem is given in~\cite{Sab96}--\cite{Sab98b}. This proof is constructive, that is, it gives an explicit procedure for finding the polynomial~$Q$.  In~\cite{CSW97} R.~Connelly, I.\,Kh.~Sabitov, and A.~Walz modified the initial proof of I.\,Kh.~Sabitov so that it became less complicated, though less elementary and non-constructive. For a survey of related results, see~\cite{Sab11}. 

One of the most important applications of  Theorem~\ref{theorem_main3} is its application to the so-called \textit{Bellows Conjecture\/}. A \textit{flex} of a polyhedron $P:K\to\R^3$ is a continuous family of polyhedra $P_t:K\to\R^3$, $0\le t\le 1$, such that $P_0=P$, the edge lengths of the polyhedra~$P_t$ are constant, and the polyhedra $P_{t_1}$ and $P_{t_2}$ are not congruent unless $t_1=t_2$. A polyhedron~$P$ is called \textit{flexible\/} if it admits a flex. The famous Cauchy Theorem asserts that no convex polyhedron is flexible. In 1897 R.~Bricard~\cite{Bri97} constructed his famous flexible octahedra and, moreover, classified all flexible octahedra and found out that all they are self-intersected. The first example of a flexible embedded polyhedron was obtained by R.~Connelly~\cite{Con77}. The Bellows Conjecture asserts that the generalized volume~$V(P_t)$ of a flexible polyhedron is constant. This conjecture was posed in~\cite{Kui78} and~\cite{Con78} in 1978.  The name ``Bellows Conjecture'' is due to R.~Connelly.  Theorem~\ref{theorem_main3} easily implies the following corollary.

\begin{cor}\label{cor_flex3}
The Bellows Conjecture holds, i.\,e., for any flex $P_t:K\to\R^3$, $0\le t\le 1$, the generalized volume~$V(P_t)$ is constant.
\end{cor}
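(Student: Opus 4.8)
The plan is to deduce the corollary directly from Theorem~\ref{theorem_main3}, using the elementary fact that a continuous function on a connected interval taking values in a finite set must be constant. The entire argument hinges on the observation that along a flex the squared edge lengths are, by definition, unchanged.

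First I would fix a flex $P_t:K\to\R^3$, $0\le t\le1$. Since a flex preserves edge lengths, the squared edge lengths $l_{uv}(P_t)$ are independent of $t$, so the set $l(P_t)$ equals a fixed collection of numbers $l^{(0)}:=l(P_0)$ for every $t\in[0,1]$. Applying Theorem~\ref{theorem_main3} to each $P_t$ yields $Q\bigl(V(P_t),l^{(0)}\bigr)=0$, so every value $V(P_t)$ is a root of the one-variable polynomial
\[
q(V):=Q\bigl(V,l^{(0)}\bigr)=V^N+a_1\bigl(l^{(0)}\bigr)V^{N-1}+\cdots+a_N\bigl(l^{(0)}\bigr).
\]
Because $Q$ is monic, $q$ is a monic polynomial of degree~$N$ with fixed numerical coefficients; in particular $q$ is not identically zero and hence has at most $N$ distinct roots. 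Therefore $t\mapsto V(P_t)$ takes values in the finite set $\{V:q(V)=0\}$.

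Next I would check that $t\mapsto V(P_t)$ is continuous. This is immediate from the defining formula~\eqref{eq_vol3}: after fixing a base point $O$, the generalized volume $V(P_t)$ is a finite sum of oriented tetrahedral volumes $V([O\,P_t(u)P_t(v)P_t(w)])$, each of which is a polynomial in the coordinates of the vertices $P_t(u),P_t(v),P_t(w)$. Since a flex is by definition a continuous family of polyhedra, these coordinates depend continuously on~$t$, and hence so does $V(P_t)$. A continuous map from the connected interval $[0,1]$ into the finite (hence discrete) root set of $q$ must be constant, which is exactly the claim that $V(P_t)$ is constant.

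I do not anticipate any genuine obstacle: all the substantive content is already supplied by Theorem~\ref{theorem_main3}, and the only points needing (routine) justification are the continuity of $V(P_t)$ and the finiteness of the root set. The latter is guaranteed precisely because the Sabitov polynomial is \emph{monic}, so that specializing the $l$-variables to fixed numerical values cannot produce the zero polynomial.
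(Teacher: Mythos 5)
Your proof is correct and is exactly the argument the paper intends when it says Theorem~\ref{theorem_main3} ``easily implies'' the corollary: constant edge lengths fix the coefficients of the monic Sabitov polynomial, so $V(P_t)$ is a continuous function into the finite root set of a fixed nonzero one-variable polynomial, hence constant. You also correctly identify monicity as the point guaranteeing the specialized polynomial is not identically zero.
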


Until now it has been unknown whether Sabitov polynomials  for arbitrary simplicial polyhedra exist in dimensions $n>3$. The only partial result is the remark of I.\,Kh.~Sabitov~\cite{Sab11} that the Sabitov polynomials exist for pyramids, where a simplicial polyhedron is called a \textit{pyramid\/} if it contains a vertex that is joined by edges with all other vertices of the polyhedron.

In this paper our goal is to obtain analogues of Theorem~\ref{theorem_main3} and Corollary~\ref{cor_flex3} for polyhedra in the $4$-dimensional Euclidean space~$\R^4$. First we need to say what is understood under a polyhedron in~$\R^n$. A natural analogue of the above notion of a polyhedron in~$\R^3$ is obtained by replacing the triangulated oriented surface~$K$ with an arbitrary oriented $(n-1)$-dimensional pseudomanifold.

\begin{defin}[Pseudomanifold polyhedron]
A finite simplicial complex~$K$ is called a \textit{$k$-dimensional pseudomanifold\/} if every simplex of~$K$ is contained in a $k$-simplex and every $(k-1)$-simplex of~$K$ is contained in exactly two $k$-simplices. A $k$-dimensional pseudomanifold~$K$ is said to be \textit{oriented\/} if all $k$-simplices of~$K$ are endowed with orientations so that, for every $(k-1)$-simplex~$\tau$, the orientations of the two $k$-simplices containing~$\tau$ induce opposite orientations of~$\tau$.  
Let $K$ be an oriented $(n-1)$-dimensional pseudomanifold. An \textit{oriented pseudomanifold polyhedron\/} of combinatorial type~$K$ is a mapping $P:K\to\R^n$ whose restriction to every simplex of~$K$ is linear. 
\end{defin}

Though this definition covers most of interesting examples, we shall need a more general notion of polyhedron, which will be referred as \textit{cycle polyhedron\/}. This definition of polyhedron  appeared in the thesis by A.\,L.~Fogelsanger~\cite{Fog88}.

Let~$\Delta^M$ be an $M$-dimensional simplex. Let $\mathscr{C}_k(\Delta^M)$ denote the $k$th simplicial chain group of~$\Delta^M$ with integral coefficients. Recall that $\mathscr{C}_k(\Delta^M)$ is the free Abelian group of rank~$\binom{M+1}{k+1}$ generated by oriented $k$-faces~$\sigma$ of~$\Delta^M$ with relations $\bar\sigma=-\sigma$, where $\bar\sigma$ is the face~$\sigma$ with the orientation reversed. Let  $\partial: \mathscr{C}_k(\Delta^M)\to \mathscr{C}_{k-1}(\Delta^M)$ be the boundary operator. It is well known that $\partial^2=0$ and the homology groups of the chain complex~$\mathscr{C}_*(\Delta^M)$ are trivial in positive dimensions and~$\Z$ in dimension~$0$. A chain~$Z\in \mathscr{C}_k(\Delta^M)$ is called a \textit{cycle} if $\partial Z=0$. 
The \textit{support\/} of a chain~$Z\in \mathscr{C}_k(\Delta^M)$ is the simplicial subcomplex $\supp(Z)\subset\Delta^M$ consisting of all $k$-simplices $\sigma\subset\Delta^M$ entering into~$Z$ with nonzero coefficients and all their subsimplices.
Notice that the support of a $k$-chain is always a \textit{pure} $k$-dimensional simplicial complex. This means that every simplex of this complex is contained in a $k$-dimensional simplex of this complex. 
In the sequel we as a rule do not mention the simplex~$\Delta^M$ and say that $Z$ is a $k$-dimensional cycle if $Z$ is an element of~$\mathscr{C}_k(\Delta^M)$  for some simplex~$\Delta^M$ and $\partial Z=0$. The number~$M$ is irrelevant since we can always embed~$\Delta^M$ as a face of~$\Delta^{M'}$ for any~$M'>M$. Actually, it is convenient to suppose that all cycles under consideration are simplicial cycles in some big simplex~$\Delta^M$ and all simplicial complexes under consideration are subcomplexes of~$\Delta^M$.  All cycles under consideration will be cycles with integral coefficients.   

\begin{defin}[Cycle polyhedron]
An \textit{oriented cycle polyhedron} in~$\R^n$ is a pair~$(Z,P)$ such that $Z$ is an $(n-1)$-dimensional cycle and $P:\supp(Z)\to \R^n$ is a mapping whose restriction to every simplex of $\supp(Z)$ is linear. We shall say that a polyhedron~$(Z,P)$ has combinatorial type~$Z$ and we shall write $P:Z\to\R^n$ to indicate that $P$ is an oriented polyhedron of combinatorial type~$Z$.
The images of vertices and edges of~$\supp(Z)$ under the mapping~$P$ are called vertices and edges of the polyhedron respectively.
\end{defin}

Every pseudomanifold polyhedron can be regarded as a cycle polyhedron. Indeed, to transform a pseudomanifold polyhedron $P:K\to \R^n$ into a cycle polyhedron we just take the fundamental cycle~$[K]$ of the oriented pseudomanifold~$K$. 
In the sequel we always work with cycle polyhedra and omit the word ``cycle''.

Any $(n-1)$-dimensional cycle $Z$ can be written as
\begin{equation}\label{eq_cycl}
Z=\sum_{i=1}^kq_i\left[v_1^{(i)}\ldots v_n^{(i)}\right],
\end{equation}
where $q_i\in\Z$ and~$\bigl[v_1^{(i)}\ldots v_n^{(i)}\bigr]$ are  oriented $(n-1)$-simplices of $\supp(Z)$. Choose an arbitrary base point $O\in\R^n$.
The \textit{generalized volume} of an oriented polyhedron~$(Z,P)$ is defined by 
\begin{equation}\label{eq_volZ}
V_Z(P)=\sum_{i=1}^kq_i\, V\!\left(\left[O\,P\bigl(v_1^{(i)}\bigr)\ldots P\bigl(v_n^{(i)}\bigr)\right]\right).
\end{equation}
The following lemma is straightforward.

\begin{lem}
If $\partial Z=0$, then the generalized volume~$V_Z(P)$ is independent of the choice of the base point~$O\in\R^n$.
\end{lem}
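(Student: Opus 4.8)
The plan is to show that passing from one base point~$O$ to another base point~$O'$ changes each summand in~\eqref{eq_volZ} by a quantity that, after summation, assembles into a linear functional evaluated on~$\partial Z$; since $\partial Z=0$ the total change vanishes. The only geometric input is the elementary identity asserting that for any $n+2$ points $B_0,\ldots,B_{n+1}\in\R^n$ the oriented volumes of the $n+2$ facets of the (necessarily degenerate) $(n+1)$-simplex $[B_0\ldots B_{n+1}]$ satisfy
\begin{equation}\label{eq_cocycle}
\sum_{j=0}^{n+1}(-1)^j V\bigl([B_0\ldots\widehat{B_j}\ldots B_{n+1}]\bigr)=0,
\end{equation}
where the hat denotes omission of a vertex. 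This is purely linear algebra: writing $n!\,V([A_0\ldots A_n])$ as the $(n+1)\times(n+1)$ determinant whose rows are $(1,A_0),\ldots,(1,A_n)$, the left-hand side of~\eqref{eq_cocycle} is, up to the factor~$n!$, the cofactor expansion along the first column of an $(n+2)\times(n+2)$ matrix having two identical columns of ones, hence equal to zero.

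First I would fix two base points $O,O'\in\R^n$, abbreviate $A_j^{(i)}=P(v_j^{(i)})$, and apply~\eqref{eq_cocycle} to the $n+2$ points $O,\,O',\,A_1^{(i)},\ldots,A_n^{(i)}$ for each simplex occurring in~\eqref{eq_cycl}. Isolating the two terms that omit $O$ and $O'$ from the remaining $n$ terms (and using $(-1)^{m+1}=(-1)^{m-1}$) turns~\eqref{eq_cocycle} into
\begin{equation}\label{eq_diff}
V\bigl([O\,A_1^{(i)}\ldots A_n^{(i)}]\bigr)-V\bigl([O'A_1^{(i)}\ldots A_n^{(i)}]\bigr)=\sum_{m=1}^{n}(-1)^{m-1}V\bigl([O\,O'A_1^{(i)}\ldots\widehat{A_m^{(i)}}\ldots A_n^{(i)}]\bigr).
\end{equation}

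Next I would introduce, for an oriented $(n-2)$-simplex $\tau=[w_1\ldots w_{n-1}]$ of $\supp(Z)$, the quantity $W(\tau)=V\bigl([O\,O'P(w_1)\ldots P(w_{n-1})]\bigr)$, and extend $W$ linearly to all $(n-2)$-chains. Writing $V_Z(P;O)$ for the right-hand side of~\eqref{eq_volZ} computed with base point~$O$, I multiply~\eqref{eq_diff} by~$q_i$ and sum over~$i$: the left-hand side becomes $V_Z(P;O)-V_Z(P;O')$, while the right-hand side is exactly $W(\partial Z)$, since the simplicial boundary operator acts by $\partial[v_1^{(i)}\ldots v_n^{(i)}]=\sum_{m=1}^n(-1)^{m-1}[v_1^{(i)}\ldots\widehat{v_m^{(i)}}\ldots v_n^{(i)}]$ and $W$ is linear. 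As $\partial Z=0$ by hypothesis, we get $W(\partial Z)=0$, whence $V_Z(P;O)=V_Z(P;O')$, which is the assertion.

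The argument is essentially routine, and the genuinely substantive step is the verification of~\eqref{eq_cocycle}, which carries all the geometry. The only point demanding care is the sign bookkeeping in passing from~\eqref{eq_cocycle} to~\eqref{eq_diff} and then to $W(\partial Z)$: one must check that the alternating signs coming from the cofactor expansion in~\eqref{eq_cocycle} match the Koszul signs of the boundary operator, so that the weighted sum of the right-hand sides of~\eqref{eq_diff} genuinely reproduces $W$ applied to $\partial Z$ and not to some other combination of the faces. Once these signs are aligned, the conclusion is immediate.
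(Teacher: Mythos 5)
Your proof is correct. There is no argument in the paper to compare against: the author simply declares the lemma straightforward and leaves it to the reader, and your write-up is exactly the natural proof being omitted. The key identity (cofactor expansion of an $(n+2)\times(n+2)$ matrix with two equal columns of ones), the resulting formula for the difference of the two cone volumes, and the reassembly of the summed right-hand sides into the functional $W$ evaluated on $\partial Z$ are all sound; your sign bookkeeping also checks out, since deleting the vertex $A_m^{(i)}$ contributes $(-1)^{m+1}=(-1)^{m-1}$, which is precisely the sign of the $m$-th face in $\partial\bigl[v_1^{(i)}\ldots v_n^{(i)}\bigr]$ with the paper's indexing. The one step you use tacitly is that $W$ is well defined as a linear functional on $(n-2)$-chains, i.e., $W(\bar\tau)=-W(\tau)$ under orientation reversal of~$\tau$; this is immediate from the antisymmetry of the determinant under row transpositions, so nothing essential is missing.
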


Now we are ready to formulate the main result of this paper.

\begin{theorem}\label{theorem_main4}
Let $Z$ be a $3$-dimensional cycle. Then there exists a monic polynomial 
\begin{equation}\label{eq_Sab}
Q(V,l)=V^{N}+a_1(l)V^{N-1}+a_2(l)V^{N-2}+\cdots+a_N(l)
\end{equation}
such that $a_j\in\mathbb{Q}[l]$ and for every polyhedron $P:Z\to\mathbb{R}^4$, the generalized volume~$V_Z(P)$ satisfies the equation~$Q(V_Z(P),l(P))=0$. Here $l$ denotes the set of variables~$l_{uv}=l_{vu}$ corresponding to edges~$[uv]\in \mathrm{supp}(Z)$.\end{theorem}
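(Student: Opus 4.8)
The plan is to deduce Theorem~\ref{theorem_main4} from an integrality statement in the style of the Connelly--Sabitov--Walz reformulation~\cite{CSW97} of Sabitov's theorem. First I would treat the coordinates of the images of the vertices of $\supp(Z)$ as independent indeterminates $x_{v,i}$, $1\le i\le 4$, and work in the field $L=\Q(x_{v,i})$. In this field the squared edge lengths $l_{uv}=\sum_{i=1}^{4}(x_{u,i}-x_{v,i})^2$ and the generalized volume $V_Z$ (a fixed polynomial of degree~$4$ in the coordinates obtained from \eqref{eq_volZ} with $O$ at the origin) are concrete elements. If I can show that $V_Z$ is integral over the subring $A=\Q[l_{uv}:[uv]\in\supp(Z)]$, then the resulting monic relation with coefficients in $A$ is an identity in $L$, and hence in $\Q[x_{v,i}]$; applying the specialization homomorphism that sends the indeterminates to the actual coordinates of $P(v)$ yields precisely the polynomial~\eqref{eq_Sab} for every polyhedron $P:Z\to\R^4$.

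To prove integrality I would invoke the place-theoretic criterion, which should be the role of Sabitov's Key Lemma: an element $x$ of a field $L$ finitely generated over $\Q$ is integral over a subring $A$ if and only if $\varphi(x)\ne\infty$ for every place $\varphi$ of $L$ that is finite on $A$. Thus the problem reduces to the purely valuation-theoretic assertion that for every place $\varphi$ of $L$ with $\varphi(l_{uv})\ne\infty$ for all edges $[uv]$, one has $\varphi(V_Z)\ne\infty$. Writing $\nu$ for an associated valuation, I must show that $\nu(l_{uv})\ge 0$ for all edges forces $\nu(V_Z)\ge 0$.

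The entire geometric content, and the main obstacle, lies in this valuation step. If all vertices stay finite under $\varphi$, then $V_Z$ is a polynomial in finite quantities and the claim is trivial; the difficulty is that a place finite on all edge lengths may nevertheless send some vertices to infinity. This happens precisely because over $\overline{\Q}$ the form $\sum_i t_i^2$ is isotropic, so vertices can escape along null directions while the squared lengths of the incident edges stay finite, and it is exactly along such isotropic degenerations that a degree-$4$ volume could a priori blow up. I would organize the argument as an induction, stratifying the places by a complexity invariant $\m$ recording which vertices escape and at what rate; the auxiliary statements $\mathbf{A}_{\m}$ and $\mathbf{B}_{\m,q}$ should serve as the inductive hypotheses, carrying respectively the bound on $V_Z$ and the finer bounds on the intermediate ``partial volumes'' needed to make the induction close.

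The inductive step is where the genuinely four-dimensional work enters, and I expect it to be the hardest part. The natural tool is Fogelsanger's analysis of simplicial cycles~\cite{Fog88}: a $3$-cycle need not be a manifold, but it decomposes into, or is assembled from, simpler cycles, and this lets me cut $\supp(Z)$ along the set of escaping vertices into pieces to which the inductive hypothesis applies and then reassemble the estimate. The surgery must be carried out at the level of cycles, so that $\partial Z=0$ is preserved and the base-point independence of $V_Z$ remains available, and the isotropic subspace spanned by the escaping directions must be tracked carefully, since that is where the cancellations keeping edge lengths finite while forcing the volume to infinity would occur. Making the cut simultaneously compatible with the cycle structure and with the valuation $\nu$, so that the contribution of the escaping part is absorbed by lower-complexity instances of $\mathbf{A}_{\m}$ and $\mathbf{B}_{\m,q}$, is the decisive and most delicate point of the whole proof.
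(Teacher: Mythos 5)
Your setup---passing to universal coordinates, reducing the theorem to integrality of $V_Z$ over $\Q[l_{uv}]$, and invoking the place-theoretic criterion---coincides exactly with the paper's reformulation (Theorem~\ref{theorem_alg4}) and is correct. The genuine gap is that everything after this point is a hope rather than an argument. You propose to induct on a stratification of the places themselves, according to which vertices escape to infinity and how fast, and to close the induction by cutting $\supp(Z)$ along the escaping set using Fogelsanger's analysis of cycles; but you give no construction of the cut, no definition of the complexity invariant, and no reason the pieces would again be cycles of strictly lower complexity to which your claims $\mathbf{A}_{\m}$ and $\mathbf{B}_{\m,q}$ apply. The paper does something materially different and more concrete: it never stratifies places and never uses any structure theory of cycles (Fogelsanger is cited only for the definition of a cycle polyhedron). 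It inducts on combinatorial invariants of the support---the number of vertices $m$ and the minimal vertex degree $p$---and, inside the key step (Lemma~\ref{lem_step}), on the complexity $q(Z_{[uv]})$ of the link of an edge $[uv]$ at a minimal-degree vertex. The only geometric operation is the elementary move $Z\mapsto Z-\partial\eta$ with $\eta=[uvw_jw_{j+1}w_{j+2}]$ a $4$-simplex, which stays inside the class of cycles and strictly reduces the complexity; places enter only through Cayley--Menger determinant computations (Lemmas~\ref{lem_place1} and~\ref{lem_place2}) showing that some diagonal $l_{w_jw_{j+2}}$ has finite value, so that $V_Z$ is integral over a ring on which the given place is finite.

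You also miss the central new phenomenon of dimension~$4$, which any version of your plan would inevitably hit. The Cayley--Menger place lemmas require $\varphi(l_{uv})\ne 0$, not merely $\varphi(l_{uv})\ne\infty$---a condition with no analogue in dimension~$3$---and as a consequence the induction only proves that $l_{uv}^{s}V_Z$ is integral over $R_K$ for some $s\ge 0$, not that $V_Z$ is. Removing this multiplier is a separate, non-formal problem: the paper's Lemma~\ref{lem_alg} handles it by algebraic geometry, showing that the subset $\{l_{uv_1}=\cdots=l_{uv_p}=0\}$ of the variety $X_K$ of edge-length vectors has codimension at least~$2$ (Lemma~\ref{lem_codim}, which itself needs an analytic perturbation argument, Lemmas~\ref{lem_estimate} and~\ref{lem_sdvig}), and then passing to the normalization (Lemma~\ref{lem_algebraic}). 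Nothing in your proposal produces or disposes of such multipliers, and it is precisely this step---not the valuation bookkeeping---that blocks the extension to dimensions $n\ge 5$. So the proposal, while starting on the right foot, is not a proof, and its intended induction is not the one that makes the paper work.
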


The polynomial~$Q$ will be called a \textit{Sabitov polynomial} for polyhedra of combinatorial type~$Z$. As in dimension~$3$, a \textit{flex\/} of an oriented polyhedron $P:Z\to\R^n$ is a continuous family of oriented polyhedra $P_t:Z\to\R^n$, $0\le t\le 1$, such that $P_0=P$, the edge lengths of the polyhedra~$P_t$ are constant, and the polyhedra $P_{t_1}$ and $P_{t_2}$ are not congruent unless $t_1=t_2$. 
An interesting class of flexible $4$-dimensional  cross-polytopes was constructed by A.~Walz in 1998. Further examples were obtained by H.~Stachel~\cite{Sta00}.
Theorem~\ref{theorem_main4} immediately implies the following

\begin{cor}\label{cor_flex4}
Let $P_t:Z\to \mathbb{R}^4$, $0\le t\le 1$, be a flex of an oriented polyhedron. Then the generalized volume $V_Z(P_t)$ is constant.
\end{cor}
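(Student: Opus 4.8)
The plan is to derive this corollary directly from Theorem~\ref{theorem_main4}, exactly as Corollary~\ref{cor_flex3} follows from Theorem~\ref{theorem_main3}. The decisive observation is that during a flex the squared edge lengths are frozen: by the definition of a flex one has $l(P_t)=l(P_0)$ for all $t\in[0,1]$, so there is a single fixed tuple $l_0$ of real numbers with $l(P_t)=l_0$ throughout the deformation.

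First I would invoke Theorem~\ref{theorem_main4} to obtain a monic polynomial $Q(V,l)=V^N+a_1(l)V^{N-1}+\cdots+a_N(l)$ with $a_j\in\Q[l]$ such that $Q(V_Z(P_t),l(P_t))=0$ for every $t$. Substituting $l=l_0$ produces a single one-variable polynomial $q(V):=Q(V,l_0)$ with fixed real coefficients. Since $Q$ is monic, $q$ is monic of degree $N$, hence it is not the zero polynomial and has at most $N$ roots. The relation above then says that $V_Z(P_t)$ is a root of $q$ for every $t$, so the function $t\mapsto V_Z(P_t)$ takes values in the fixed finite set $S=\{V\in\R:q(V)=0\}$, which satisfies $|S|\le N$.

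Next I would check that $t\mapsto V_Z(P_t)$ is continuous. This is immediate from the definition~\eqref{eq_volZ}: $V_Z(P_t)$ is a fixed polynomial expression in the coordinates of the vertices $P_t(v)$, and these coordinates depend continuously on~$t$ by the definition of a flex. Finally, a continuous function on the connected interval $[0,1]$ whose image lies in a finite set must be constant; hence $V_Z(P_t)$ is constant on $[0,1]$, as claimed.

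I do not expect a genuine obstacle here: all the mathematical content has been absorbed into Theorem~\ref{theorem_main4}, and the present argument is a soft topological deduction combining the finiteness of the root set with the connectedness of the parameter interval. The only point deserving a word of care is the continuity of the generalized volume, but this follows at once from its explicit polynomial form.
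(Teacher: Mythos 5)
Your proof is correct and is exactly the deduction the paper intends: the paper states that Corollary~\ref{cor_flex4} follows immediately from Theorem~\ref{theorem_main4}, and the implicit argument is precisely yours --- fixed edge lengths give a fixed monic polynomial, so the continuous function $t\mapsto V_Z(P_t)$ takes values in a finite set and is therefore constant on the connected interval $[0,1]$.
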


\begin{remark}\label{remark_volumeint4}
If $P:K\to\R^n$ is an oriented pseudomanifold polyhedron, then formula~\eqref{eq_volZ} turns into the standard formula
\begin{equation*}%\label{eq_volume}
V(P)=\sum_{[v_1\ldots v_n]\in K_+}V([O\,P(v_1)\ldots P(v_n)]),
\end{equation*}
which is a direct analogue of~\eqref{eq_vol3}. We can also generalize formula~\eqref{eq_vol3m} to cycle polyhedra of arbitrary dimension. We have $V_{Z}(P)=\int_{\R^n}\lambda(x)dx$, where $dx=dx_1\ldots dx_n$ is the standard measure in~$\R^n$ and $\lambda(x)$ is the algebraic intersection number of an arbitrary curve going from~$x$ to the infinity and the singular cycle~$P(Z)$.
\end{remark}
\begin{remark}
Let $K$ be a pure $(n-1)$-dimensional simplicial complex. 
It is easy to see that $(n-1)$-dimensional cycles~$Z$ whose supports are contained in~$K$ are in one-to-one correspondence with homology classes~$z\in H_{n-1}(K,\Z)$. The following point of view is useful. We shall say that a mapping $P:K\to \R^n$ whose restriction to every simplex of~$K$ is linear is a \textit{polyhedron of combinatorial type}~$K$. Different homology classes~$z\in H_{n-1}(K,\Z)$ (equivalently, different $(n-1)$-cycles $Z$ such that $\supp(Z)\subset K$) can be regarded as different \textit{orientations\/} of the polyhedron~$P:K\to\R^n$. To each such orientation is assigned the generalized volume~$V_Z(P)$ of the oriented polyhedron~$(Z,P)$. 
\end{remark}

\begin{remark}
Until now we have spoken on simplicial polyhedra only.
However, we can  generalize our results to non-simplicial polyhedra. An arbitrary polyhedron in~$\R^4$ can be defined, for instance, in the following way. First, we define a \textit{polytopal cell complex\/} as a cell complex consisting of combinatorial convex polytopes glued along combinatorial equivalences of their facets. (A \textit{combinatorial convex polytope} is a convex polytope up to combinatorial equivalence, that is, a convex polytope with forgotten Euclidean structure.) Now an \textit{oriented polytope\/} in~$\R^4$ is a triple $(K,Z,P)$ such that $K$ is a pure $3$-dimensional polytopal cell complex, $Z\in\mathscr{C}_3(K,\Z)$ is a cellular cycle, and $P:K\to\R^4$ is a mapping that realizes every cell of~$K$ as a convex polytope in~$\R^4$. The volume of this polyhedron can be defined by the formula $V_{Z}(P)=\int_{\R^n}\lambda(x)dx$ as in Remark~\ref{remark_volumeint4}.
A ``naive'' attempt to generalize Theorem~\ref{theorem_main4} and Corollary~\ref{cor_flex4} to non-simplicial polyhedra certainly fails. Indeed, one can easily find a deformation of the standard cube~$[0,1]^4$ such that the edge lengths are constant while the volume varies continuously. However, the following result holds. \textit{The volume~$V_Z(P)$ of an arbitrary polyhedron~$(K,Z,P)$ in~$\R^4$ is a root of certain monic polynomial $Q$ determined solely by the combinatorial structure of~$(K,Z)$ and the metrics of all faces of~$P(K)$. If $P_t:Z\to \mathbb{R}^4$, $0\le t\le 1$, is a flex of an oriented polyhedron $P_0=P$, that is, a deformation preserving the metrics of all faces of~$P(K)$, then the volume $V_Z(P_t)$ is constant\/}. These assertions follow immediately from Theorem~\ref{theorem_main4} and Corollary~\ref{cor_flex4}, since we can subdivide an arbitrary polyhedron so as to obtain a simplicial polyhedron. This reasoning mimics the reasoning of I.\,Kh.~Sabitov~\cite{Sab96}--\cite{Sab98b} in dimension~$3$.
\end{remark}

It is useful to give an algebraic reformulation of Theorem~\ref{theorem_main4}. In dimension~$3$ a similar reformulation has been given in~\cite{CSW97}.
Let $K$ be a finite pure $(n-1)$-dimensional simplicial complex with $m$ vertices.
For any polyhedron $P:K\to\R^n$ and any vertex $v\in K$, we denote by $x_{v,i}(P)$ the $i$th coordinate of the point $P(v)\in\R^n$. A polyhedron $P:K\to\R^n$ is uniquely determined by $m$ points $P(v)\in\R^n$, which can be chosen arbitrarily independently of each other. So it is natural to regard the $mn$ coordinates~$x_{v,i}$ of these $m$ points as independent variables. By $x_K$ we denote the set of these $mn$ independent variables and by $\Z[x_K]$ and $\Q[x_K]$ we denote the polynomial rings in these $mn$ variables with integral and rational coefficients respectively. 
The $n$-tuples $(x_{v,1},\ldots,x_{v,n})$ of independent variables are called \textit{universal vertices\/} of the \textit{universal polyhedron\/} of combinatorial type~$K$. 
For every edge~$[uv]\in K$, the \textit{square of the universal length\/} of it is defined to be the element
$l_{uv}\in\Z[x_K]$ given by
\begin{equation}\label{eq_lx}
l_{uv}=\sum_{i=1}^n(x_{u,i}-x_{v,i})^2.
\end{equation}
If two vertices~$u,v\in K$ are not connected by an edge in~$K$, the same formula~\eqref{eq_lx} gives the \textit{square of the universal length} of the diagonal~$[uv]$. 
Indeed, it is convenient to say that a \textit{universal $n$-dimensional point\/} is an $n$-tuple $x=(x_1,\ldots,x_n)$, where $x_i$ are independent variables, and the \textit{square of the universal distance\/} between two universal points $x=(x_1,\ldots,x_n)$ and $y=(y_1,\ldots,y_n)$ is given by $l_{xy}=\sum_{i=1}^n(x_i-y_i)^2$.

If $Z$ is an $(n-1)$-dimensional cycle  with support contained in~$K$, then the \textit{universal volume\/} of the \textit{universal oriented polyhedron\/} of combinatorial type~$Z$ is the element $V_Z\in\Q[x_K]$ given by
\begin{equation}
\label{eq_VZ}
V_Z=\frac{1}{n!}\sum_{i=1}^kq_i
\left|
\begin{array}{cccc}
x_{v_1^{(i)},1}&x_{v_1^{(i)},2}&\cdots & x_{v_1^{(i)},n}\\
x_{v_2^{(i)},1}&x_{v_2^{(i)},2}&\cdots & x_{v_2^{(i)},n}\\
\vdots & \vdots & \ddots & \vdots\\
x_{v_n^{(i)},1}&x_{v_n^{(i)},2}&\cdots & x_{v_n^{(i)},n}
\end{array}
\right|
\end{equation}
where $Z$ is given by~\eqref{eq_cycl}.

Now, for any polyhedron $P:K\to\R^n$, one can substitute the coordinates~$x_{v,i}(P)$ of the points~$P(v)$ for the variables~$x_{v,i}$. Then the values of the universal squared edge lengths~$l_{uv}$ will become equal to the squared edge lengths~$l_{uv}(P)$ of the polyhedron~$P$ and the value of the universal volume~$V_Z$ will become equal to the generalized volume~$V_Z(P)$. 

Let $R_K\subset\Q[x_K]$ be the $\Q$-subalgebra generated by all elements~$l_{uv}$ such that $[uv]$ is an edge of~$K$. 
The algebraic reformulation of Theorem~\ref{theorem_main4} is as follows.  

\begin{theorem}\label{theorem_alg4}
Let $Z$ be a $3$-dimensional cycle and let $K$ be its support. Then the element $V_Z\in\Q[x_K]$ is integral over the ring~$R_K$, i.\,e., there exists a monic polynomial $Q\in R_K[V]$ such that~$Q(V_Z)=0$.
\end{theorem}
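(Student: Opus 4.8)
The plan is to separate the statement into a soft part, settled by classical invariant theory, and a hard combinatorial part, settled by a valuation-theoretic argument that feeds on the already-known three-dimensional theorem. \textbf{Reduction to diagonals.} First I would enlarge $R_K$ to the $\Q$-subalgebra $\hR_K\subset\Q[x_K]$ generated by the squared universal lengths $l_{uv}$ of \emph{all} pairs of vertices $u,v\in K$, including the non-edges (the ``diagonals''), so that $R_K\subseteq\hR_K$. I claim $V_Z$ is integral over $\hR_K$ for a formal reason. Indeed, $V_Z$ is invariant under simultaneous translation of all vertices --- this is precisely the lemma following~\eqref{eq_volZ}, equivalent to $\partial Z=0$ --- so $V_Z$ is a polynomial in the difference vectors $y_v=x_v-x_{v_0}$ based at a fixed vertex $v_0$. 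Moreover, replacing each $x_v$ by $gx_v$ for $g\in O(4)$ multiplies every determinant in~\eqref{eq_VZ}, hence $V_Z$, by $\det g=\pm1$, so $V_Z^2$ is an $O(4)$-invariant polynomial in the vectors $y_v$. By the first fundamental theorem of invariant theory for the orthogonal group in characteristic zero, $V_Z^2$ is a polynomial in the inner products $\langle y_u,y_v\rangle=\tfrac12\bigl(l_{uv_0}+l_{vv_0}-l_{uv}\bigr)$, each of which lies in $\hR_K$. Hence $V_Z^2\in\hR_K$ and $V_Z$ is a root of the monic polynomial $V^2-V_Z^2\in\hR_K[V]$. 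It therefore suffices to prove that every squared diagonal $l_{uv}$ is integral over $R_K$.

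\textbf{The valuation criterion.} For the diagonals I would use the place-theoretic characterization of integral closure exploited in dimension three in~\cite{CSW97}: an element $f$ of the fraction field $L=\Q(x_K)$ is integral over $R_K$ if and only if $\varphi(f)\ne\infty$ for every place $\varphi\colon L\to\mathbb{K}\cup\{\infty\}$ that is finite on $R_K$. So I fix a diagonal $[uv]$ and, arguing by contradiction, a place $\varphi$ finite on $R_K$ with $\varphi(l_{uv})=\infty$. Finiteness on $R_K$ says exactly that all \emph{edge} squared lengths take finite values, while some vertices may run off to infinity with respect to the associated valuation $w$. The essential difficulty, and the reason the three-dimensional method does not transfer verbatim, surfaces here: over a general field the condition $l_{uv}=\sum_i(x_{u,i}-x_{v,i})^2\in\mathcal{O}_\varphi$ does \emph{not} force the individual differences $x_{u,i}-x_{v,i}$ to be finite, since the form $\sum_i t_i^2$ has nontrivial isotropic vectors. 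Thus clusters of vertices can recede to infinity along isotropic directions while all edge lengths stay bounded, and one must show that the cycle condition nevertheless keeps the diagonals finite.

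\textbf{Induction via links.} To control the isotropic degenerations I would run an induction on the number $\m$ of vertices of $\supp(Z)$, with a secondary induction on an auxiliary parameter $q$ measuring the depth of the degeneration; this is the role I would assign to the auxiliary statements ``Claim $\mathbf{A}_{\m}$'' and ``Claim $\mathbf{B}_{\m,q}$''. The key structural observation is that the link of each vertex of a $3$-cycle carries a natural $2$-cycle, to which Sabitov's theorem in dimension three, Theorem~\ref{theorem_main3}, applies. I would use this as follows: near a vertex $v$ that recedes to infinity, the star of $v$, after rescaling by the valuation, degenerates onto a configuration whose combinatorics is governed by the link $L_v$; applying the three-dimensional result to $L_v$ exhibits the leading terms of the diagonals inside $L_v$ as roots of monic polynomials in edge data, hence as finite quantities. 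An algebraic recursion of Sabitov type --- ``Sabitov's Key Lemma'' --- then propagates this finiteness from the links to the diagonals of the whole cycle, letting one contract an isotropic cluster and reduce to a cycle with fewer vertices, where the induction hypothesis applies.

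\textbf{Main obstacle.} I expect essentially all the difficulty to lie in the contraction step. One must organize the vertices into a filtration by their valuation-theoretic orders of growth, check that at each level the surviving configuration is still the support of a genuine cycle --- so that both the induction hypothesis and Theorem~\ref{theorem_main3} remain applicable --- and verify that the cancellations forced by $\partial Z=0$ suppress the potentially infinite contributions to each diagonal. Guaranteeing that collapsing an isotropic cluster of vertices yields a smaller cycle, rather than destroying the cycle structure, and that the bound survives the passage to the quotient, is the crux on which the whole argument turns.
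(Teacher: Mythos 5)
Your opening reduction is where the argument breaks, and it breaks irreparably. The invariant-theoretic step itself is fine: $V_Z$ is translation-invariant because $\partial Z=0$, the diagonal $O(4)$-action scales it by $\det g$, and the first fundamental theorem then gives $V_Z^2\in\hR_K$, where $\hR_K$ is generated by \emph{all} pairwise squared distances. But the statement you reduce to --- that every squared diagonal $l_{uv}$ is integral over $R_K$ --- is false in general, and it is false precisely for the combinatorial types that make Theorem~\ref{theorem_alg4} interesting. If $l_{uv}$ were a root of a monic polynomial with coefficients in $R_K$, then along any flex $P_t:Z\to\R^4$ the value $l_{uv}(P_t)$ would be confined to the finite root set of a fixed polynomial, hence constant by continuity. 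If this held for \emph{every} diagonal, then all pairwise distances would be constant along any flex, so all the $P_t$ would be congruent to $P_0$ --- i.e., no flexible polyhedron of combinatorial type $Z$ could exist. But flexible $4$-dimensional cross-polytopes do exist (Walz; Stachel~\cite{Sta00}), and their boundary complexes are $3$-dimensional cycles; during such a flex at least one diagonal must vary continuously, so that diagonal is \emph{not} integral over $R_K$. The entire point of the bellows phenomenon is this asymmetry: the volume is integral over the edge-length ring even though the diagonals are not. Your reduction replaces the theorem by a strictly stronger, false rigidity statement, so no amount of work on the ``hard combinatorial part'' can rescue it.

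For comparison, the paper never asserts integrality of any diagonal. It runs an induction on the number of vertices of $\supp(Z)$ and, for fixed vertex number, on the minimal vertex degree, with the induction step split into Lemma~\ref{lem_step} and Lemma~\ref{lem_alg}. In Lemma~\ref{lem_step} the cycle is simplified by elementary moves $Z\mapsto Z-\partial\eta$ over $4$-simplices $\eta$; the diagonals $l_{w_jw_{j+2}}$ that these moves create are not shown to be integral but are \emph{eliminated} --- via places and Cayley--Menger relations in the non-constructive proof, via resultants in the constructive one --- at the cost of a factor $l_{uv}^s$ attached to an \emph{edge} of $K$. Lemma~\ref{lem_alg} then removes these factors by an algebro-geometric argument (normalization of the configuration variety $X_K$ plus the codimension-$\ge 2$ statement of Lemma~\ref{lem_codim}); this, not a link-by-link appeal to Theorem~\ref{theorem_main3}, is the genuinely new $4$-dimensional ingredient. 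Indeed the paper's induction is self-contained in dimension $4$ and never invokes the $3$-dimensional theorem, whereas your proposed ``induction via links'' both relies on the false diagonal claim and leaves the contraction step --- which you yourself flag as the crux --- entirely unproved.
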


\begin{remark}
Theorem~\ref{theorem_main3} is usually formulated so that the Sabitov polynomial~$Q$ is required to be even. It is reasonable because reversing the orientation of a polyhedron we reverse the sign of its generalized volume. We also could formulate Theorem~\ref{theorem_main4} with this additional requirement. However, thus we would obtain an equivalent statement because the assertions that the elements~$V_Z$ and~$V_Z^2$ are integral over the ring~$R_K$ are equivalent to each other.
\end{remark}

As it has already been mentioned above, a polyhedron $P:K\to \R^n$  is uniquely determined by the images of vertices of~$K$. Hence it is often convenient to regard $P$ as a mapping $S\to\R^n$, where $S$ is the vertex set of~$K$. The next useful step was made in~\cite{CSW97} (in dimension~$3$).
Consider an arbitrary field~$F$ and say that an  \textit{oriented polyhedron\/} in~$F^n$ is a pair $(Z,P)$ such that $Z$ is an $(n-1)$-dimensional cycle and $P:S\to F^n$ is a mapping, where $S$ is the vertex set of the support of~$Z$. 
By definition, the squared edge lengths~$l_{uv}(P)$ and the generalized volume~$V_Z(P)$ are obtained by substituting the coordinates of the points~$P(v)$ in the right-hand sides of~\eqref{eq_lx} and~\eqref{eq_VZ} respectively. (The generalized volume~$V_Z(P)$ is well defined only if the characteristic of the field~$F$ is not $2,3,\ldots,n$.) 
It follows from Theorem~\ref{theorem_alg4} that Theorem~\ref{theorem_main4} still holds if we replace the field~$\R$ with an arbitrary field~$F$ of characteristic not equal to $2,3,\ldots,n$.

Actually, we shall need the cases $F=\R$ and $F=\C$ only. It is important to notice that in the case $F=\C$ the squared edge length~$l_{uv}(P)$ is not the square of the standard \textit{Hermitian\/} distance between the points~$P(u)$ and~$P(v)$. Instead, $l_{uv}(P)$ is defined by means of the standard \textit{bilinear\/} scalar product in~$\C^n$. So it is possible, for instance, that $l_{uv}(P)=0$, but $P(u)\ne P(v)$. 

The author is grateful to I.\,Kh.~Sabitov for attracting the author's attention to the problem of computing the volume of a polyhedron from the edge lengths and for multiple fruitful discussions. The author wishes to thank V.\,M.~Buchstaber, I.\,A.~Dynnikov, S.\,A.~Gaifullin, S.\,O.~Gorchinskiy, and S.\,Yu.~Rybakov for useful comments.

\section{Cayley--Menger determinants}

Let $p_0,p_1,\ldots,p_n$ be points in a Euclidean space and let $l_{ij}$ denote the square of the distance between~$p_i$ and~$p_j$. The \textit{Cayley--Menger determinant\/} of the points $p_0,p_1,\ldots,p_n$ is the $(n+2)\times(n+2)$-determinant given by
\begin{equation*}%\label{eq_CM}
CM(p_0,\ldots,p_n)=\left|
\begin{matrix}
0 & 1 & 1 & 1 & \cdots & 1\\
1 & 0 & l_{01} & l_{02} & \cdots & l_{0n}\\
1 & l_{01} & 0 & l_{12} & \cdots & l_{1n}\\
1 & l_{02} & l_{12} & 0 & \cdots & l_{2n}\\
\vdots & \vdots & \vdots & \vdots & \ddots & \vdots\\
1 & l_{0n} & l_{1n} & l_{2n} & \cdots & 0
\end{matrix}
\right|
\end{equation*} 
This determinant was introduced by A.~Cayley~\cite{Cay41} for $n=4$ and by K.~Menger~\cite{Men28}, \cite{Men31} for an arbitrary $n$ (see also~\cite{Blu70}). Their result is the following. \textit{The points $p_0,\ldots,p_n$ are affinely dependent, that is, are contained in an $(n-1)$-dimensional plane if and only if $CM(p_0,\ldots,p_n)=0$.
Moreover, if $T$ is an $n$-dimensional simplex  with vertices $p_0,\ldots,p_n$, then the square of its $n$-dimensional volume is given by}
\begin{equation}\label{eq_CMvol}
V^2=\frac{(-1)^{n+1}}{2^n(n!)^2}CM(p_0,\ldots,p_n).
\end{equation}

Since a polynomial with real coefficients is uniquely determined by the polynomial function associated to it, we see that the same holds for universal points. We mean the following. 

First, let $p_0,\ldots,p_n$ be universal $(n-1)$-dimensional points, that is, $(n-1)$-tuples of independent variables $(x_{i,1},\ldots,x_{i,n-1})$, $i=0,\ldots,n$, and let $l_{ij}$ be the universal squared distance between $p_i$ and~$p_j$. Then the equality $CM(p_0,\ldots,p_n)=0$ holds in the polynomial ring $\Q[x_{0,1},\ldots,x_{n,n-1}]$ in variables~$x_{i,k}$, $i=0,\ldots,n$, $k=1,\ldots,n-1$.

Second, consider the cycle $\partial\eta$ for some $n$-dimensional oriented simplex~$\eta\subset\Delta^M$. Then the universal volume~$V=V_{\partial\eta}$ of the simplex~$\eta$ satisfies~\eqref{eq_CMvol}, where $p_0,\ldots,p_n$ are the universal vertices of~$\eta$.

\section{Scheme of the proof}

Let $K$ be a finite simplicial complex.  A \textit{degree} of a vertex $u\in K$ is the number of edges $e\in K$ containing~$u$.

All known proofs of Theorem~\ref{theorem_main3} are based on the induction on the genus of the surface~$K$, then, for a given genus, on the number of vertices of~$K$, and finally, for given genus and number of vertices, on the smallest vertex degree of~$K$. The scheme of the induction is the following. If $K$ contains a vertex of degree~$3$, then one can delete this vertex and replace its star by a single triangle. Otherwise, one can decrease the degree of the vertex with the smallest degree by performing a flip, see Fig.~\ref{fig_flip}. The only obstruction to such operations is a \textit{splitting triangle} for $K$ consisting of three edges $[ij]$, $[jk]$, and $[ki]$ of~$K$ such that $[ijk]$ is not a triangle of~$K$. If $K$ contains a splitting triangle, then one can cut $K$ along this triangle and obtain either two polyhedra each with smaller number of vertices or a single polyhedron of smaller genus. The inductive step is the following lemma, which is referred later as Sabitov's Key Lemma. We formulate this lemma not to explain the Sabitov proof, but only to clarify the scheme of the induction.
\begin{figure}
\unitlength=3mm
\begin{center}
\begin{picture}(23,6)

\multiput(0,0)(15,0){2}{%
\begin{picture}(0,0)
\put(0,3){\line(2,3){2}}
\put(0,3){\line(2,-3){2}}
\put(2,0){\line(2,3){4}}
\put(2,6){\line(2,-3){4}}
\put(6,6){\line(2,-3){2}}
\put(6,0){\line(2,3){2}}
\put(2,0){\line(1,0){4}}
\put(2,6){\line(1,0){4}}
\put(4,3){\circle*{.3}}
\end{picture}}

\put(0,3){\line(1,0){8}}
\put(10,3){\vector(1,0){3}}
\put(15,3){\line(1,0){4}}
\put(21,0){\line(0,1){6}}

\end{picture}
\end{center}
\caption{Decreasing the degree of a vertex  by a flip}
\label{fig_flip}
\end{figure}
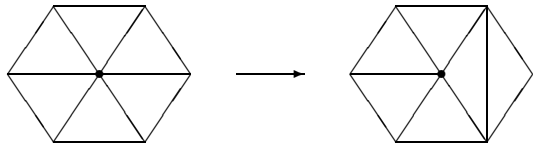

\begin{SKL}
Assume that the assertion of Theorem~\ref{theorem_main3} holds for any triangulated oriented surface that satisfies one of the following conditions:
\begin{itemize}
\item has genus less than~$g$, 
\item has genus~$g$ and has less than~$m$ vertices,
\item has genus~$g$, has exactly $m$ vertices, and has the smallest vertex degree less than~$p$.
\end{itemize}
Let  $K$ be a triangulated oriented surface of genus~$g$ that has $m$ vertices with the smallest vertex degree equal to~$p$. Then the assertion of Theorem~\ref{theorem_main3} holds for~$K$.
\end{SKL}

In dimensions greater than~$3$ we have two difficulties:

1. It seems to be difficult to find operations decreasing the number of vertices of a polyhedron and preserving the topological type of it.

2. It seems to be even more difficult to find some complexity function (like genus in dimension~$3$) of topological types of polyhedra that can be decreased by certain surgery.

The idea of our proof is to neglect the topological type of the support of a cycle and to make induction only on certain quantitative characteristics of the supports of cycles such as the number of vertices and the degrees of vertices. It is also very important that we use cycle polyhedra rather than pseudomanifold polyhedra. This approach allows us to simplify a cycle~$Z\in\mathcal{C}_3(\Delta^M)$ by means of the  \textit{elementary moves} that transform the cycle~$Z$ to cycles $Z-\partial\eta$, where $\eta$ is an oriented $4$-face of~$\Delta^M$.
We shall mostly use the universal language described in the end of section~\ref{section_intro}. This means that we consider the universal squared edge lengths~$l_{uv}$ and the universal volume~$V_Z$ for an $(n-1)$-dimensional cycle~$Z$ and we do not deal with particular polyhedra $P:Z\to\R^n$.

Let $Z$ be a $3$-dimensional cycle and let $K$ be its support. 
Let $m$ be the number of vertices of~$K$ and let $p$ be the smallest degree of a vertex in~$K$. Any vertex~$u\in K$ of degree~$p$ will be called a \textit{minimal degree vertex\/} of~$K$. We shall prove Theorem~\ref{theorem_alg4} by induction on~$m$ and, for a fixed $m$, by induction on~$p$. For the base of the induction, we can take $Z=0$. Then  $K=\emptyset$, $m=0$, $R_K=\Q$, and $V_Z=0$, hence, Theorem~\ref{theorem_alg4} obviously holds. Now assume that Theorem~\ref{theorem_alg4} holds for any $3$-cycle whose support either has less than~$m$ vertices or has exactly $m$ vertices and has the smallest vertex degree less than~$p$. The proof of the induction step will be divided into two lemmas.

\begin{lem}\label{lem_step}
Assume that the assertion of Theorem~\ref{theorem_alg4} holds for any $3$-cycle whose support either has less than~$m$ vertices or has exactly $m$ vertices and has the smallest vertex degree less than~$p$. Let $Z$ be a $3$-cycle whose support $K$ has $m$ vertices with the smallest vertex degree equal to~$p$. Let $u$ be a minimal degree vertex of~$K$, that is, a vertex of degree~$p$ and let $v\in K$ be an arbitrary vertex joined by an edge with~$u$ in~$K$. Then, for some $s\ge 0$, the element~$l_{uv}^sV_Z$ is integral over~$R_K$. 
\end{lem}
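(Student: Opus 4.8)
The plan is to reduce the statement about the cycle $Z$ to a collection of local statements supported on the link of the minimal degree vertex $u$, and then to eliminate the coordinates of $u$ using Cayley--Menger relations. Since $u$ has degree $p$, its neighbours form a finite set $w_1,\ldots,w_p$, and every $3$-simplex of $K$ containing $u$ has the form $[u w_a w_b w_c]$ for some neighbours. The universal volume $V_Z$ splits as $V_Z=V'+V''$, where $V''$ collects all tetrahedra of $Z$ containing $u$ and $V'$ collects the rest. The summand $V'$ is the universal volume of a cycle whose support does not involve $u$ in its top simplices; I expect that $V'$ is integral over $R_K$ (indeed over a subring built from edges not at $u$) by a direct application of the induction hypothesis after discarding $u$, so the whole difficulty is concentrated in controlling $V''$ and, through it, the coordinates $x_{u,1},\ldots,x_{u,4}$ of the apex.

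First I would make precise the reduction to the link. Because $\partial Z=0$, the part of $\partial$ that lands on simplices containing $u$ must cancel, which forces the $3$-simplices through $u$ to assemble into $u$ coned over a $2$-cycle $L$ supported on the link of $u$; concretely $V''=\tfrac{1}{4}\sum (\text{terms}) $ rewrites, after choosing the base point $O=P(u)$, as a weighted sum of $3\times3$ determinants in the coordinates of the neighbours $w_a$ relative to $u$. The key algebraic object is the affine span of the points $P(u),P(w_1),\ldots,P(w_p)$. The plan is to exploit that we are allowed to multiply by a power of $l_{uv}$: geometrically $l_{uv}\neq0$ lets us treat the edge $[uw_v]$ as a genuine direction and normalise, so that the coordinates of $u$ can be expressed through squared distances $l_{uw_a}$ (all of which lie in $R_K$, since these are edges at $u$) together with the coordinates of the $w_a$.

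The central mechanism is the Cayley--Menger determinant from Section~2. For any five of the points $P(u),P(w_a),P(w_b),P(w_c),P(w_d)$ we have $CM=0$ once the five points are affinely dependent, and in general $CM$ expresses the squared $4$-volume in terms of the pairwise squared distances $l_{ij}$. The idea is to view the relations $l_{uw_a}=\sum_i(x_{u,i}-x_{w_a,i})^2$ as a system that is linear in the coordinates $x_{u,i}$ up to the single quadratic term $\sum_i x_{u,i}^2$; subtracting pairs of these equations makes the system genuinely linear in the $x_{u,i}$, with coefficient matrix built from differences of neighbour coordinates. Solving this linear system by Cramer's rule expresses each $x_{u,i}$ as a ratio whose numerator lies in $\Z[x_K]$ and whose denominator is a Gram-type determinant that, after multiplication by a suitable power, is controlled by the edge-length generators of $R_K$ and in particular by $l_{uv}$. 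Substituting these expressions into $V''$ then exhibits $l_{uv}^s V''$, and hence $l_{uv}^s V_Z$, as integral over $R_K$.

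The main obstacle I anticipate is precisely the degeneracy of this linear system: the neighbours $P(w_a)$ need not affinely span a $3$-dimensional space, the bilinear form over $\C$ may be degenerate so that the relevant Gram determinant vanishes identically, and the apex $u$ may fail to be determined by the $l_{uw_a}$ at all. This is where the freedom to multiply by $l_{uv}^s$ is indispensable and also where it may be insufficient by itself: one likely has to combine the Cramer-rule elimination with a further inductive reduction that replaces the star of $u$ by fewer simplices (mirroring the vertex-deletion and flip moves of Sabitov's Key Lemma, but realised here as elementary moves $Z\mapsto Z-\partial\eta$ that do not change $V_Z$ since $V_{\partial\eta}$ satisfies the Cayley--Menger identity). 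Handling the characteristic-free and over-$\C$ setting, where $l_{uv}=0$ does not imply $P(u)=P(w_v)$, will require care so that the chosen denominators are not spuriously zero; I expect that the correct move is to pass to the cone structure of $V''$ and argue that the offending determinants, raised to a bounded power, are themselves polynomials in the $l_{uw_a}$ and in the squared lengths among the $w_a$, all of which are generators of $R_K$.
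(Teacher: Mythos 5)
Your proposal has two genuine gaps, either of which is fatal as written. First, the decomposition $V_Z=V'+V''$ does not let you invoke the induction hypothesis on~$V'$: the chain formed by the $3$-simplices of~$Z$ that do not contain~$u$ is \emph{not} a cycle (its boundary equals minus the boundary of the star part, i.e.\ minus the link cycle~$L$), so $V'$ is not the generalized volume of any cycle, and moreover $V'$ and~$V''$ separately depend on the choice of base point, so they are not even well-defined invariants. The hypothesis of the lemma only covers honest $3$-cycles with fewer vertices or smaller minimal degree. Second, and more fundamentally, the Cramer-rule elimination of $x_{u,1},\ldots,x_{u,4}$ cannot prove integrality over~$R_K$. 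Even when it applies (it needs at least five neighbours with nondegenerate Gram determinant, whereas $p=4$ is possible, e.g.\ when the link of~$u$ is the boundary of a tetrahedron), it only expresses $V_Z$ as a rational function of the $l_{uw_a}$ and of the \emph{coordinates} of the remaining vertices; those coordinates are transcendental over~$R_K$, so this says nothing about integrality over the edge-length ring. The denominators are Gram-type determinants of the neighbours, which are unrelated to~$l_{uv}$; there is no reason a power of~$l_{uv}$ clears them, and indeed the true role of the multiplier $l_{uv}^s$ is entirely different (see below). Your closing remark that the moves $Z\mapsto Z-\partial\eta$ ``do not change $V_Z$'' is also false: they change it by~$V_{\partial\eta}$; what is true is that $V_{\partial\eta}$ is integral over the ring generated by the squared edge lengths of~$\eta$, so integrality can be transferred --- at the cost of adjoining new diagonals.

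What the paper actually does, and what is missing from your plan, is the following. One works with the link $Z_{[uv]}$ of the \emph{edge} $[uv]$ (a $1$-cycle), not with the star of the vertex~$u$, and runs an inner induction on $q(Z_{[uv]})$, the sum of absolute values of its coefficients. Picking a directed simple cycle $w_1,\ldots,w_r$ in~$Z_{[uv]}$, one forms $Z_j=Z-\partial[uvw_jw_{j+1}w_{j+2}]$; each $Z_j$ either falls under the hypothesis of the lemma or has strictly smaller $q\bigl((Z_j)_{[uv]}\bigr)$, so $l_{uv}^{s_j}V_{Z_j}$ is integral over $R_{K_j}$, whence $V_Z$ is integral over $R_K\bigl[l_{w_jw_{j+2}},\frac{1}{l_{uv}}\bigr]$ --- the single new diagonal $l_{w_jw_{j+2}}$ being the obstruction. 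The crux, which your proposal never confronts, is eliminating these diagonals: the paper shows, via the vanishing of the Cayley--Menger determinant of the six points $u,v,w_1,w_{j-1},w_j,w_{j+1}$ and the theory of places (or, constructively, resultants), that every place finite on $R_K\bigl[\frac{1}{l_{uv}}\bigr]$ must be finite on at least one diagonal $l_{w_jw_{j+2}}$; this step is exactly where the hypothesis $\varphi(l_{uv})\neq 0$ enters, and it is the true source of the multiplier~$l_{uv}^s$ in the statement. Without an argument of this kind handling the diagonals created by the elementary moves, the reduction cannot close.
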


\begin{lem}\label{lem_alg}
Let $Z$ be a $3$-cycle with support~$K$. Let $u$ be a vertex of~$K$ and let $v_1,\ldots,v_p$ be all vertices joined by edges with~$u$ in~$K$. Assume that, for every~$i$, there exists a nonnegative integer~$s_i$ such that the element~$l_{uv_i}^{s_i}V_Z$ is integral over~$R_K$. Then the element~$V_Z$ is also integral over~$R_K$. 
\end{lem}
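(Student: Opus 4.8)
The plan is to use the valuative criterion for integral closure. Since $V_Z\in\Q[x_K]\subset\Q(x_K)$ and $R_K\subset\Q(x_K)$, the element $V_Z$ is integral over $R_K$ if and only if $V_Z\in\mathcal{O}_\nu$ for every valuation ring $\mathcal{O}_\nu$ of the field $\Q(x_K)$ containing $R_K$, i.e. for every valuation $\nu$ of $\Q(x_K)$ with $\nu|_{R_K}\ge 0$. So I fix such a valuation $\nu$ and aim to prove $\nu(V_Z)\ge 0$. For each neighbour $v_i$ we have $l_{uv_i}\in R_K$, hence $\nu(l_{uv_i})\ge 0$; and since $l_{uv_i}^{s_i}V_Z$ is integral over $R_K$ it lies in $\mathcal{O}_\nu$, so that $\nu(V_Z)\ge -s_i\,\nu(l_{uv_i})$ for every $i$.

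The easy case is immediate: if $\nu(l_{uv_i})=0$ for at least one $i$, the last inequality gives $\nu(V_Z)\ge 0$. Everything therefore reduces to the case in which every edge at $u$ is ``infinitesimally short'', that is $\nu(l_{uv_i})>0$ for all $i=1,\dots,p$. This is the only case that must be treated separately, and it is the heart of the matter.

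For this remaining case I would exploit two structural features of $V_Z$. First, by the lemma on base-point independence I may compute $V_Z$ with the origin placed at the vertex $u$; then every tetrahedron of $Z$ containing $u$ contributes $0$, and $V_Z$ becomes a $\Z$-linear combination of oriented volumes over the tetrahedra $[w_1w_2w_3w_4]$ of $\supp(Z)$ that do \emph{not} contain $u$, each such volume being $\tfrac1{4!}$ times the determinant of the four difference vectors $x_{w_k}-x_u$. Second, every squared edge length $l_e$, $e\in\supp(Z)$, lies in $R_K$ and so has nonnegative valuation, while the $p$ edges at $u$ even have positive valuation. The idea is then to bound $\nu$ of each such determinant by passing to the associated Gram determinant $\det\bigl(\langle x_{w_i}-x_u,\,x_{w_j}-x_u\rangle\bigr)$, rewriting each entry through the polarization identity $\langle x_{w_i}-x_u,\,x_{w_j}-x_u\rangle=\tfrac12(l_{uw_i}+l_{uw_j}-l_{w_iw_j})$ and invoking the Cayley--Menger formula~\eqref{eq_CMvol} for squared volumes. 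In the favourable situation where all the vertices $w_k$ are neighbours of $u$ and all pairs $w_iw_j$ are edges of $\supp(Z)$, every quantity entering these formulas belongs to $R_K$, and those coming from edges at $u$ have positive valuation, whence $\nu(V_Z)\ge 0$ follows at once.

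The hard part will be precisely that not all of the relevant pairwise squared distances are edge lengths of $K$: a tetrahedron of $Z$ avoiding $u$ may involve vertices that are not neighbours of $u$, and even two neighbours of $u$ need not be joined by an edge, so the corresponding $l_{ww'}$ are diagonals that lie outside $R_K$ and whose valuation is a priori uncontrolled. Here the subtlety stressed after Theorem~\ref{theorem_alg4} becomes essential: over a field with the \emph{bilinear} scalar product the inequality $\nu(l_{uv_i})>0$ does \emph{not} force the difference $x_{v_i}-x_u$ to be ``small'', since the form is isotropic, so one cannot simply argue that $u$ collapses onto its neighbours. To handle this I would replace $Z$ by the homologous cycle obtained by coning the link cycle of $u$ off a single neighbour $v_1$, an elementary move $Z\mapsto Z-\partial\eta$ of the type described in Section~3, thereby writing $V_Z$ as the volume of a cycle not involving $u$ plus a correction that is a combination of oriented volumes of $4$-simplices all of whose edges at $u$ are infinitesimally short. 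Controlling the valuation of these $4$-dimensional volumes --- in particular taming the uncontrolled diagonal lengths through a careful bookkeeping of the Cayley--Menger determinants and through simultaneous use of the hypotheses $l_{uv_i}^{s_i}V_Z$ integral over $R_K$ for \emph{all} neighbours at once --- is the main obstacle, and it is where the whole weight of the lemma lies.
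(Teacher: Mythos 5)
Your setup is sound: integrality over $R_K$ is equivalent to $\nu(V_Z)\ge 0$ for every valuation (equivalently, place) of $\Q(x_K)$ that is nonnegative on $R_K$; the inequality $\nu(V_Z)\ge -s_i\,\nu(l_{uv_i})$ is correct; and the case where $\nu(l_{uv_i})=0$ for some $i$ is indeed immediate. But what remains is not a technical loose end: the case $\nu(l_{uv_i})>0$ for \emph{all} $i$ is the entire content of the lemma, and your proposal does not prove it. Placing the base point at $u$, rewriting volumes through Gram and Cayley--Menger determinants, and coning off the link of $u$ all founder exactly where you say they do: the diagonals $l_{ww'}$ occurring in those determinants do not lie in $R_K$, so nothing controls their valuations, and over $\C$ with the bilinear form the condition $\nu(l_{uv_i})>0$ imposes no ``smallness'' on the vectors $x_{v_i}-x_u$. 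You have correctly located the obstacle but not removed it; conceding that this is ``the main obstacle\dots where the whole weight of the lemma lies'' is an acknowledgement that the proof is missing, not a proof.

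The paper removes this obstacle by a mechanism of a different nature, not by a valuation estimate on $V_Z$. It introduces the affine variety $X_K\subset\C^r$, the Zariski closure of the set of squared-edge-length vectors of actual polyhedra, so that $R_K=\Q[X_K]$, and proves (Lemma~\ref{lem_codim}) that the locus $\{l_{uv_1}=\cdots=l_{uv_p}=0\}$ has codimension at least~$2$ in~$X_K$. Lemma~\ref{lem_algebraic} then yields integrality: one first reduces to the coefficients of the minimal polynomial of $V_Z$ over the fraction field of $R_K$ (note that $V_Z$ itself does not lie in that field), and for such elements one passes to the normalization $\widetilde X_K$, where the polar set of a rational function is pure of codimension~$1$ and hence cannot be contained in a codimension-$2$ locus; it is this purity argument on a normal variety, not a direct computation with valuations, that converts the codimension bound into finiteness of $V_Z$ at all places simultaneously. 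Finally, the codimension bound itself --- the genuine counterpart of your ``hard case'' --- is established by the complex-analytic Lemmas~\ref{lem_estimate} and~\ref{lem_sdvig}: an arbitrary point of $X_K$ with all $l_{uv_i}=0$ is approximated by an honest polyhedron $P$, and then $P(u)$ is shifted along the Hermitian-conjugate vector $\bar\xi_j$ by a suitable root $\varkappa$ of $\bar\lambda t^2-2h^2t+\lambda=0$, which makes one $l_{uv_j}$ exactly zero while moving every other squared length by at most $3\varepsilon$ --- precisely the taming of isotropic vectors that your sketch lacks. Without an argument of this kind, or some substitute for Lemma~\ref{lem_codim}, your proposal does not establish the lemma.
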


Obviously, these two lemmas imply the assertion of Theorem~\ref{theorem_alg4} for any cycle~$Z$ whose support has $m$ vertex and has the smallest vertex degree equal to~$p$, which completes the inductive proof of Theorem~\ref{theorem_alg4}.

Lemma~\ref{lem_step} is a $4$-dimensional analogue of Sabitov's Key Lemma. There exist two different proofs of Sabitov's Key Lemma. The first one is constructive; it is due to I.\,Kh.~Sabitov~\cite{Sab96}--\cite{Sab98b}. The second one was obtained by R.~Connelly, I.\,Kh.~Sabitov, and A.~Walz~\cite{CSW97}. It is simpler than Sabitov's proof, though non-constructive and less elementary. We shall also give two proofs of Lemma~\ref{lem_step}. The first one will be in spirit of Connelly--Sabitov--Walz; it will be given in section~\ref{section_nonconstr}. The second proof given in section~\ref{section_constr} will be constructive; it will be similar to the initial Sabitov proof of his Key Lemma. 

Nevertheless, there are two important differences between Sabitov's Key Lemma and Lemma~\ref{lem_step}. The first is that we consider a more general definition of a polyhedron, which allows us to use a simpler induction. This simplification is crucial because it is not known how to generalize directly Sabitov's induction procedure to higher dimensions. The second difference is the appearance of the multipliers~$l_{uv}^s$. This is a new phenomenon, which does not exist in dimension~$3$. In section~\ref{section_high} we shall give an analogue of Lemma~\ref{lem_step} for all dimensions~$n\ge 3$. This general lemma will clarify the cause for the absence of the multiplier~$l_{uv}^s$ in dimension~$3$.
The multiplier~$l_{uv}^s$ makes Lemma~\ref{lem_alg} necessary for the proof of Theorem~\ref{theorem_alg4}. In dimension~$3$ we do not need any analogue of this lemma. The proof of Lemma~\ref{lem_alg} will be given in section~\ref{section_alg}. Unfortunately, our proof of Lemma~\ref{lem_alg} is not constructive. Hence the constructive proof of Lemma~\ref{lem_step} given in section~\ref{section_constr} still does not allow us to obtain a constructive proof of Theorem~\ref{theorem_main4}.
Notice that the only obstruction to spreading our proof to dimensions $n\ge 5$ is that we cannot prove a proper multidimensional analogue of Lemma~\ref{lem_alg}. All other parts of our proof can be easily generalized to any dimension $n\ge 5$ (see section~\ref{section_high}).

\section{Non-constructive proof of Lemma~\ref{lem_step}}\label{section_nonconstr}

%This proof will be similar to the proof of the induction step in~\cite{CSW97}.  %The main difference will consist in multipliers~$l_{uv}^s$. 
The main tool of the proof is theory of places. Recall some basic facts on places, see~\cite{Lang} for details. Let $F$ be a field. We add to~$F$ an element~$\infty$ and, for every~$a\in F$, put $a\pm\infty=\infty\cdot\infty=\frac10=\infty$, $\frac{a}{\infty}=0$, and if $a\ne 0$, $a\cdot\infty=\infty$. The expressions~$\frac00$, $\frac{\infty}{\infty}$, $0\cdot\infty$, $\infty\pm\infty$ are not defined.

A \textit{place\/} of a field~$L$ into a field~$F$ is a mapping~$\varphi:L\to F\cup\{\infty\}$ such that 
$\varphi(x+y)=\varphi(x)+\varphi(y)$ and $\varphi(xy)=\varphi(x)\varphi(y)$ whenever the right-hand sides are defined, and $\varphi(1)=1$. Elements of~$F$ are called \textit{finite\/} and we say that a place~$\varphi$ is finite on an element $x\in L$ if $\varphi(x)\in F$. We shall use the following standard lemma (see~\cite[page~12]{Lang}).

\begin{lem}
An element $x$ in a field~$L$ containing the ring~$R$ is integral over~$R$ if and only if every place defined on~$L$ that is finite on~$R$ is finite on~$x$.
\end{lem}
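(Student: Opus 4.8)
The plan is to prove the two implications separately, translating everything into the language of valuation rings. To a place $\varphi\colon L\to F\cup\{\infty\}$ one attaches its \emph{valuation ring} $\mathcal{O}_\varphi=\{z\in L:\varphi(z)\ne\infty\}$, on which $\varphi$ is finite, together with its maximal ideal $\mathfrak{m}_\varphi=\{z\in L:\varphi(z)=0\}$. The single auxiliary fact I would extract from the axioms is that, for $z\ne 0$, one has $\varphi(z)=\infty$ if and only if $\varphi(z^{-1})=0$: were $\varphi(z)=\infty$ and $\varphi(z^{-1})$ a nonzero element of $F\cup\{\infty\}$, the product $\varphi(z)\varphi(z^{-1})$ would be defined and equal to $\infty$, contradicting $\varphi(zz^{-1})=\varphi(1)=1$; the reverse implication follows symmetrically. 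This inversion property will be used in both directions.

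For the forward implication, suppose $x$ is integral over $R$, say $x^{n}+a_{1}x^{n-1}+\cdots+a_{n}=0$ with $a_{j}\in R$, and let $\varphi$ be a place finite on $R$. If $x=0$ there is nothing to prove, so assume $x\ne 0$ and, for contradiction, that $\varphi(x)=\infty$, whence $\varphi(x^{-1})=0$ by the remark above. Dividing the integral equation by $x^{n}$ gives
\[
1+a_{1}x^{-1}+a_{2}x^{-2}+\cdots+a_{n}x^{-n}=0 .
\]
Applying $\varphi$ term by term is legitimate here because every term on the left is now finite: for $j\ge 1$ one has $\varphi(a_{j}x^{-j})=\varphi(a_{j})\,\varphi(x^{-1})^{j}=\varphi(a_{j})\cdot 0=0$, using $\varphi(a_{j})\in F$. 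Hence $\varphi(1)=0$, contradicting $\varphi(1)=1$; therefore $\varphi(x)\ne\infty$.

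For the converse I argue by contraposition: assuming $x$ is \emph{not} integral over $R$, I would construct a place of $L$ finite on $R$ with $\varphi(x)=\infty$. Put $y=x^{-1}$ and consider $\mathcal{A}=R[y]\subseteq L$. Then $y$ is not a unit in $\mathcal{A}$, for otherwise $x=y^{-1}\in\mathcal{A}$ could be written as $x=\sum_{j=0}^{k}c_{j}y^{j}$ with $c_{j}\in R$, and multiplying by $x^{k}$ would produce a monic relation $x^{k+1}-c_{0}x^{k}-\cdots-c_{k}=0$, making $x$ integral over $R$. Being a non-unit, $y$ lies in some maximal ideal $\mathfrak{m}$ of $\mathcal{A}$. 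Now I invoke the extension theorem for places (Chevalley): given the subring $\mathcal{A}\subseteq L$ and the prime ideal $\mathfrak{m}$, there is a valuation ring $\mathcal{O}$ of $L$ with maximal ideal $\mathfrak{M}$ such that $\mathcal{A}\subseteq\mathcal{O}$ and $\mathfrak{M}\cap\mathcal{A}=\mathfrak{m}$. The associated canonical place $\varphi\colon L\to(\mathcal{O}/\mathfrak{M})\cup\{\infty\}$, sending each $z\in\mathcal{O}$ to its residue class and every other element to $\infty$, is finite on $\mathcal{A}$ and hence on $R$; since $y\in\mathfrak{m}\subseteq\mathfrak{M}$ we get $\varphi(y)=0$, so $\varphi(x)=\varphi(y^{-1})=\infty$ by the inversion property. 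This $\varphi$ is the place required.

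The main obstacle is clearly the converse, and within it the extension theorem for places. That theorem is not a computation but a Zorn's lemma argument: one orders the pairs $(\mathcal{B},\mathfrak{n})$, where $\mathcal{A}\subseteq\mathcal{B}\subseteq L$ is an intermediate ring and $\mathfrak{n}$ is a prime ideal of $\mathcal{B}$ contracting to $\mathfrak{m}$, by domination, takes unions along chains, and checks that a maximal element is forced to be a valuation ring of $L$. Everything else reduces to the elementary manipulation of the integral equation and the inversion property recorded at the outset.
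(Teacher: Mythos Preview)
Your proof is correct. In the paper itself this lemma is not proved at all: it is simply quoted as a standard fact with a reference to Lang's \emph{Introduction to Algebraic Geometry}, page~12. What you have written is essentially the classical argument that appears there --- the forward direction by dividing the integral equation by $x^{n}$ and evaluating, the converse by showing $x^{-1}$ is a non-unit in $R[x^{-1}]$, picking a maximal ideal containing it, and invoking Chevalley's extension theorem to produce a dominating valuation ring of $L$. So there is nothing to compare: you have supplied exactly the standard proof the paper defers to the literature for.
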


Recall that a \textit{universal $n$-dimensional point\/} is an $n$-tuple 
$(x_1,\ldots,x_n)$ of independent variables, which are called the \textit{coordinates\/} of the point. The coordinates of a universal point~$w$ will be denoted by $x_{w,1},\ldots,x_{w,n}$. If $w_1,\ldots,w_r$ are universal points, we denote by~$L(w_1,\ldots,w_r)$ the field of rational functions in $nr$ independent variables~$x_{w_j,i}$ with coefficients in~$\Q$.  

\begin{lem}\label{lem_place1}
Let $u$, $v$, $w_1$, $w_2$, $w_3$, $w_4$ be universal $4$-dimensional points. Let $\varphi$ be a place of the field $L(u,v,w_1,w_2,w_3,w_4)$ into a field $F$ of characteristic not equal to~$2$ such that $\varphi(l_{uv})$ is neither~$0$ nor~$\infty$,
$$
\varphi(l_{w_1w_3})=\varphi(l_{w_2w_4})=
\varphi\left(\frac{l_{w_1w_3}}{l_{w_1w_2}}\right)=
\varphi\left(\frac{l_{w_2w_4}}{l_{w_3w_4}}\right)=\infty,
$$
and $\varphi(l_{uw_j})$, $\varphi(l_{vw_j})$, $j=1,2,3,4$, and  $\varphi(l_{w_2w_3})$ are finite. Then
$$
\varphi\left(\frac{l_{w_1w_4}}{l_{w_1w_3}}\right)=
\varphi\left(\frac{l_{w_1w_4}}{l_{w_2w_4}}\right)=\infty.
$$ 
\end{lem}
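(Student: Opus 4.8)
The plan is to derive everything from the single polynomial relation that the six points must satisfy --- the vanishing of their Cayley--Menger determinant --- and to read off the conclusion by passing to the valuation attached to the place $\varphi$.

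First I would record the relation. Since $u,v,w_1,w_2,w_3,w_4$ are universal $4$-dimensional points, any specialization gives six points of $\R^4$, which are automatically affinely dependent; hence by the discussion in Section~2 (applied with $n=5$) the Cayley--Menger determinant $CM(u,v,w_1,w_2,w_3,w_4)$ vanishes identically in the field $L=L(u,v,w_1,w_2,w_3,w_4)$. Write $C$ for the corresponding symmetric $7\times7$ matrix, so that $CM=\det C=0$. Let $\nu$ be the valuation of the place $\varphi$, normalized so that $\nu(x)\ge 0$ exactly when $\varphi(x)\ne\infty$ and $\nu(x)>0$ exactly when $\varphi(x)=0$; then $\varphi(x/y)=\infty$ is equivalent to $\nu(x)<\nu(y)$. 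The hypotheses become $\nu(l_{uv})=0$; $\nu(l_{w_1w_3})=:\alpha<0$ and $\nu(l_{w_2w_4})=:\beta<0$; $\alpha<\nu(l_{w_1w_2})$ and $\beta<\nu(l_{w_3w_4})$; and $\nu(l_{uw_j}),\nu(l_{vw_j}),\nu(l_{w_2w_3})\ge 0$. Putting $\gamma:=\nu(l_{w_1w_4})$, the assertion to be proved is exactly $\gamma<\alpha$ and $\gamma<\beta$.

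The key observation is the following leading-term computation. Expanding $\det C=\sum_\sigma \mathrm{sgn}(\sigma)\prod_i C_{i,\sigma(i)}$ over the fixed-point-free permutations $\sigma$ of the index set $\{0,u,v,w_1,w_2,w_3,w_4\}$ (a fixed point forces a vanishing diagonal entry), the monomial $l_{uv}\,l_{w_1w_3}^2\,l_{w_2w_4}^2$ is produced by exactly two permutations, $\sigma_1=(0\,u\,v)(w_1w_3)(w_2w_4)$ and $\sigma_2=(0\,v\,u)(w_1w_3)(w_2w_4)$; both have sign $+1$ and pick up border $1$'s in the remaining places, so the coefficient of this monomial is $2$, and each of the two terms has valuation $2\alpha+2\beta$. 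Now suppose, for contradiction, that $\gamma>\alpha+\beta$. Using $\nu(l_{w_1w_2})>\alpha$, $\nu(l_{w_3w_4})>\beta$ and the nonnegativity of the remaining entries, a finite inspection of the nine derangements of $\{w_1,w_2,w_3,w_4\}$ and of the block-mixing permutations shows that every term other than those of $\sigma_1,\sigma_2$ has valuation strictly larger than $2\alpha+2\beta$. Hence $2\alpha+2\beta$ is the strict minimum valuation, attained only at $\sigma_1,\sigma_2$. Dividing $\det C=0$ by the nonzero element $l_{uv}l_{w_1w_3}^2l_{w_2w_4}^2$ of $L$, all summands then have nonnegative valuation, so applying $\varphi$ is legitimate and sends the nonminimal summands to $0$; what remains is $1+1=0$, that is $2=0$ in $F$, contradicting $\mathrm{char}\,F\ne 2$. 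Therefore $\gamma\le\alpha+\beta$, and since $\alpha,\beta<0$ this yields $\gamma<\alpha$ and $\gamma<\beta$, which is the desired conclusion.

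I expect the only real work to be the finite ``tropical'' bookkeeping hidden in the phrase \emph{finite inspection}: one must check that, once $\gamma>\alpha+\beta$, no permutation can tie or beat the valuation $2\alpha+2\beta$. This is precisely where the hypotheses are consumed --- the two quotient conditions keep $\nu(l_{w_1w_2})$ and $\nu(l_{w_3w_4})$ strictly above $\alpha$ and $\beta$ (ruling out the competing two-cycles on $w_1w_2$ and $w_3w_4$), the finiteness conditions force every entry linking the block $\{0,u,v\}$ to $\{w_1,w_2,w_3,w_4\}$ to have nonnegative valuation (ruling out block-mixing), and $\mathrm{char}\,F\ne 2$ together with $\varphi(l_{uv})\ne 0$ ensure that the surviving leading coefficient $2l_{uv}$ does not die under $\varphi$. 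All the other steps are formal.
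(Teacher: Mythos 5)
Your proposal is correct, and at its core it is the same proof as the paper's: both rest on the vanishing of the $7\times 7$ Cayley--Menger determinant of $u,v,w_1,\ldots,w_4$, both argue by contradiction from the same hypothesis (your $\gamma>\alpha+\beta$ is literally the paper's assumption $\varphi\bigl(l_{w_1w_4}/(l_{w_1w_3}l_{w_2w_4})\bigr)=0$), and both end with $2\varphi(l_{uv})=0$, contradicting $\mathrm{char}\,F\ne 2$ and $\varphi(l_{uv})\ne 0$. The difference is only in how the leading term is isolated. The paper never expands over permutations: it divides the $w_1$-row and column by $l_{w_1w_3}$ and the $w_4$-row and column by $l_{w_2w_4}$; after this rescaling every entry is $\varphi$-finite, so $\varphi$ can be applied entrywise, and in the residue field the rescaled rows and columns vanish except for four $1$'s, whence expanding the residue determinant immediately leaves $2\varphi(l_{uv})=0$. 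That two-line scaling trick is precisely the device that renders your ``finite tropical inspection'' unnecessary, and this is what the paper's organization buys. Your deferred inspection is in fact true --- checking the nine derangements of $\{w_1,w_2,w_3,w_4\}$ and the block-mixing terms confirms the strict inequality --- so your valuation-theoretic version can be completed into a full proof; but be aware that this inspection is the real content of your argument rather than routine bookkeeping, and that your closing summary of ``where the hypotheses are consumed'' omits the cases involving $l_{w_1w_4}$ itself: the derangement $(w_1w_4)(w_2w_3)$ yields a term of valuation $2\gamma+2\nu(l_{w_2w_3})$, and mixed $4$-cycles such as $(w_1w_3w_2w_4)$ yield $\alpha+\beta+\gamma+\nu(l_{w_2w_3})$, and these are beaten by $2\alpha+2\beta$ only thanks to the contradiction hypothesis $\gamma>\alpha+\beta$, not by the quotient conditions, the finiteness conditions, or the characteristic assumption.
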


\begin{proof}
The Cayley--Menger determinant of six $4$-dimensional points is equal to zero:
\begin{eqnarray*}
\left|
\begin{array}{ccccccc}
0 & 1 & 1 & 1 & 1 & 1 & 1 \\
1 & 0 & l_{uv} & l_{uw_1} & l_{uw_2} & l_{uw_3} & l_{uw_4} \\
1 & l_{uv} & 0 & l_{vw_1} & l_{vw_2} & l_{vw_3} & l_{vw_4} \\
1 & l_{uw_1} & l_{vw_1} & 0 & l_{w_1w_2} & l_{w_1w_3} & l_{w_1w_4} \\
1 & l_{uw_2} & l_{vw_2} & l_{w_1w_2} & 0 & l_{w_2w_3} & l_{w_2w_4} \\
1 & l_{uw_3} & l_{vw_3} & l_{w_1w_3} & l_{w_2w_3} & 0 & l_{w_3w_4} \\
1 & l_{uw_4} & l_{vw_4} & l_{w_1w_4} & l_{w_2w_4}  & l_{w_3w_4} & 0
\end{array}
\right|
=0.
\end{eqnarray*}
Dividing the $4$th row and the $4$th column by~$l_{w_1w_3}$ and dividing the $7$th row and the $7$th column by~$l_{w_2w_4}$, we obtain 
\begin{eqnarray}\label{eq_det7}
\left|
\begin{array}{ccccccc}
0 & 1 & 1 & \displaystyle\frac{1}{l_{w_1w_3}}\vphantom{\frac{\bigl(}{\bigl(}} & 1 & 1 & \displaystyle\frac{1}{l_{w_2w_4}} \\
1 & 0 & l_{uv} & \displaystyle\frac{l_{uw_1}}{l_{w_1w_3}}\vphantom{\frac{\bigl(}{\bigl(}} & l_{uw_2} & l_{uw_3} & \displaystyle\frac{l_{uw_4}}{l_{w_2w_4}} \\
1 & l_{uv} & 0 & \displaystyle\frac{l_{vw_1}}{l_{w_1w_3}} & l_{vw_2} & l_{vw_3} & \displaystyle\frac{l_{vw_4}}{l_{w_2w_4}} \\
\displaystyle\frac{1}{l_{w_1w_3}} & \displaystyle\frac{l_{uw_1}}{l_{w_1w_3}} & \displaystyle\frac{l_{vw_1}}{l_{w_1w_3}} & 0\vphantom{\frac{\bigl(}{\bigl(}} & \displaystyle\frac{l_{w_1w_2}}{l_{w_1w_3}} & 1 & \displaystyle\frac{l_{w_1w_4}}{l_{w_1w_3}l_{w_2w_4}} \\
1 & l_{uw_2} & l_{vw_2} & \displaystyle\frac{l_{w_1w_2}}{l_{w_1w_3}}\vphantom{\frac{\bigl(}{\bigl(}} & 0 & l_{w_2w_3} & 1 \\
1 & l_{uw_3} & l_{vw_3} & 1\vphantom{\frac{\bigl(}{\bigl(}} & l_{w_2w_3} & 0 & \displaystyle\frac{l_{w_3w_4}}{l_{w_2w_4}} \\
\displaystyle\frac{1}{l_{w_2w_4}} & \displaystyle\frac{l_{uw_4}}{l_{w_2w_4}} & \displaystyle\frac{l_{vw_4}}{l_{w_2w_4}} & \displaystyle\frac{l_{w_1w_4}}{l_{w_1w_3}l_{w_2w_4}}\vphantom{\frac{\bigl(}{\bigl(}} & 1  & \displaystyle\frac{l_{w_3w_4}}{l_{w_2w_4}} & 0
\end{array}
\right|
=0.
\end{eqnarray}
Suppose, $\varphi\left(\frac{l_{w_1w_4}}{l_{w_1w_3}l_{w_2w_4}}\right)=0$. Then all entries of the matrix in the left-hand side of~\eqref{eq_det7} would have finite $\varphi$ values and all entries in the $4$th and in the $7$th rows and in the $4$th and in the $7$th columns would have zero $\varphi$ values except the four entries equal to~$1$. Expanding the determinant in the $4$th and the $7$th rows and in the $4$th and the $7$th columns we obtain that the determinant
\begin{eqnarray*}
\left|
\begin{array}{ccc}
0 & 1 & 1 \\
1 & 0 & \varphi(l_{uv}) \\
1 & \varphi(l_{uv}) & 0
\end{array}
\right|=2\varphi(l_{uv})
\end{eqnarray*}
is equal to~$0$. Since the characteristic of the field~$F$ is not~$2$, this contradicts the condition~$\varphi(l_{uv})\ne 0$. Hence, $\varphi\left(\frac{l_{w_1w_4}}{l_{w_1w_3}l_{w_2w_4}}\right)\ne 0$. But $\varphi(l_{w_1w_3})=\varphi(l_{w_2w_4})=\infty$. Therefore, $\varphi\left(\frac{l_{w_1w_4}}{l_{w_1w_3}}\right)=\varphi\left(\frac{l_{w_1w_4}}{l_{w_2w_4}}\right)=\infty$.
\end{proof}

\begin{lem}\label{lem_place2}
Let $u$, $v$, $w_1, \ldots,w_r$ be universal $4$-dimensional points, $r\ge 4$. Let $\varphi$ be a place of the field $L(u,v,w_1,\ldots,w_r)$ into a field $F$ of characteristic not equal to~$2$ such that $\varphi(l_{uv})$ is neither~$0$ nor~$\infty$ and
 $\varphi(l_{uw_j})$, $\varphi(l_{vw_j})$, $\varphi(l_{w_jw_{j+1}})$ are finite for $j=1,\ldots,r$ $(j+1\mod r)$. Then for some $j$,
$
\varphi(l_{w_{j}w_{j+2}})$ is finite, $j=1,2,\ldots,r-2$. 
\end{lem}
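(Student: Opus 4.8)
The plan is to argue by contradiction and to propagate the ``largeness'' of the short diagonals around the cycle $w_1,\dots,w_r$ by iterating Lemma~\ref{lem_place1}, working with the valuation attached to the place~$\varphi$.

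First I would replace $\varphi$ by its valuation. Let $\mathcal O=\varphi^{-1}(F)$ be the ring of finite elements and let $\nu$ be an associated valuation, with totally ordered value group~$\Gamma$; thus $\varphi$ is finite on~$x$ if and only if $\nu(x)\ge 0$, and $\varphi(x)=\infty$ if and only if $\nu(x)<0$. In these terms the hypotheses become $\nu(l_{uv})=0$ and $\nu(l_{uw_j}),\nu(l_{vw_j}),\nu(l_{w_jw_{j+1}})\ge 0$ for all~$j$ (indices modulo~$r$), while the assertion to be proved is that $\nu(l_{w_jw_{j+2}})\ge 0$ for at least one $j\in\{1,\dots,r-2\}$. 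Lemma~\ref{lem_place1}, rewritten, then says: given four of the points labelled $a,b,c,d$ with $l_{bc}$ finite, $\nu(l_{ac})<0$, $\nu(l_{ac})<\nu(l_{ab})$, $\nu(l_{bd})<0$ and $\nu(l_{bd})<\nu(l_{cd})$, one has $\nu(l_{ad})<\nu(l_{ac})$ and $\nu(l_{ad})<\nu(l_{bd})$; here the hypotheses on $u,v$ are automatic, since all $l_{uw_i},l_{vw_i}$ are finite and $\nu(l_{uv})=0$.

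Assuming for contradiction that $\nu(l_{w_jw_{j+2}})<0$ for every $j=1,\dots,r-2$, I would prove by induction on $k$, for $2\le k\le r-1$, the statement
\[
\nu(l_{w_jw_{j+k}})<\nu(l_{w_jw_{j+k-1}})\quad\text{and}\quad\nu(l_{w_jw_{j+k}})<0\qquad(1\le j\le r-k).
\]
For $k=2$ the second inequality is the contradiction hypothesis and the first follows from it together with $\nu(l_{w_jw_{j+1}})\ge 0$. For the step from $k-1$ to $k$ (with $k\ge 3$) I would apply the rewritten Lemma~\ref{lem_place1} to the four points $(a,b,c,d)=(w_j,w_{j+k-2},w_{j+k-1},w_{j+k})$: the diagonal $l_{ac}=l_{w_jw_{j+k-1}}$ is negative and satisfies $\nu(l_{ac})<\nu(l_{ab})$ with $l_{ab}=l_{w_jw_{j+k-2}}$ by the inductive hypothesis at level $k-1$; the diagonal $l_{bd}=l_{w_{j+k-2}w_{j+k}}$ is one of the assumed-negative distance-$2$ diagonals and satisfies $\nu(l_{bd})<\nu(l_{cd})$ because $l_{cd}=l_{w_{j+k-1}w_{j+k}}$ is an edge; and $l_{bc}=l_{w_{j+k-2}w_{j+k-1}}$ is an edge, hence finite. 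The conclusion then reads $\nu(l_{w_jw_{j+k}})<\nu(l_{w_jw_{j+k-1}})<0$, which is the statement at level~$k$. Taking $k=r-1$ and $j=1$ forces $\nu(l_{w_1w_r})<0$, whereas $[w_rw_1]$ is the wraparound edge of the cycle (the case $j=r$, $j+1\equiv 1$ of the edge hypothesis), so $\nu(l_{w_1w_r})\ge 0$ --- a contradiction completing the proof.

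The main point to get right will be the bookkeeping that makes each inductive step legitimate: one must choose the four points so that the two shorter diagonals feeding Lemma~\ref{lem_place1} are already known to be negative, so that the two ratio conditions reduce either to the inductive hypothesis or to ``negative diagonal versus nonnegative edge'', and so that the middle distance $l_{bc}$ is an honest edge. In particular I would verify that every distance-$2$ diagonal invoked has index in the admissible range $\{1,\dots,r-2\}$ and that no diagonal wrapping around the cycle is ever needed, which is exactly what guarantees that the only contradiction is forced at the wraparound edge $[w_rw_1]$. I do not expect the non-Archimedean nature of $\Gamma$ to cause any trouble, since the whole argument uses only order comparisons in $\Gamma$ and the translation of ``finiteness of a ratio'' into a strict inequality $\nu(x)<\nu(y)$, with no appeal to any metric or rank-one structure.
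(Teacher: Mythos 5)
Your proof is correct and follows essentially the same route as the paper: assuming every distance-two diagonal is infinite, you iterate Lemma~\ref{lem_place1} along the cycle until the wraparound edge $[w_rw_1]$ forces a contradiction, which is exactly the paper's induction (the paper's chain is your case $j=1$, phrased with places rather than valuations). The translation into valuation language and the extra bookkeeping over all starting indices $j$ are harmless cosmetic differences.
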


\begin{proof}
Suppose $\varphi(l_{w_jw_{j+2}})=\infty$ for $j=1,2,\ldots,r-2$. We shall prove inductively for $j=3,\ldots,r$, that $\varphi(l_{w_1w_j})=\varphi\left(\frac{l_{w_1w_j}}{l_{w_1w_{j-1}}}\right)=\infty$. This is clear for $j=3$. Now assume it for $j$ and prove it for~$j+1$. The $6$ points $u$, $v$, $w_1$, $w_{j-1}$, $w_j$, $w_{j+1}$ satisfy the conditions of Lemma~\ref{lem_place1}. The conclusion of Lemma~\ref{lem_place1} yields~$\varphi\left(\frac{l_{w_1w_{j+1}}}{l_{w_1w_j}}\right)=\infty$. Since $\varphi(l_{w_1w_j})=\infty$, we also obtain that~$\varphi(l_{w_1w_{j+1}})=\infty$. This completes the induction step. For $j=r$, we obtain $\varphi(l_{w_1w_r})=\infty$, which contradicts the assumptions of Lemma~\ref{lem_place2}. This contradiction yields that some~$\varphi(l_{w_jw_{j+2}})$ is finite.
\end{proof}

\begin{remark}
Lemmas~\ref{lem_place1} and~\ref{lem_place2} are $4$-dimensional analogues of Lemmas~3 and~4 in~\cite{CSW97}. However, compairing to the $3$-dimensional case we have to add  an essentially new condition~$\varphi(l_{uv})\ne 0$. This condition will lead us to the multiplier~$l_{uv}^s$ in Lemma~\ref{lem_step}.
\end{remark}

Now let $Z$ be a nonzero $3$-cycle and let $K$ be its support. 
For an oriented edge~$[uv]$ of~$K$, we define a $1$-cycle~$Z_{[uv]}$, which will be called the \textit{link\/} of~$[uv]$ in~$Z$. Suppose, 
\begin{equation}\label{eq_Z}
Z=\sum_{i=1}^kq_i[uvw_it_i]+A,
\end{equation}
where $q_i\in\Z$ and $A$ is a sum of $3$-simplices that do not contain the edge~$[uv]$. 
Then we put 
\begin{equation*}
Z_{[uv]}=\sum_{i=1}^kq_i[w_it_i].
\end{equation*}
Obviously, the chain~$Z_{[uv]}$ is well defined. This means that it is independent of the choice of decomposition of~$Z$ into a sum of the form~\eqref{eq_Z}. If $Z_{[uv]}$ were~$0$, the edge~$[uv]$ would not belong to the support of~$Z$. Hence, $Z_{[uv]}\ne 0$.
We have 
$$
0=\partial Z=\sum_{i=1}^kq_i([uvt_i]-[uvw_i])+B,
$$
where $B$ is a sum of $2$-simplices that do not contain the edge~$[uv]$. Hence,
$$
\sum_{i=1}^kq_i([uvt_i]-[uvw_i])=0.
$$
Therefore,
$$
\partial Z_{[uv]}=\sum_{i=1}^kq_i([t_i]-[w_i])=0.
$$
Thus $Z_{[uv]}$ is a $1$-cycle.

Now it is convenient to assume that the simplices~$[uvw_it_i]$ are pairwise distinct. We mean that for any $i\ne j$ neither $[w_it_i]=[w_jt_j]$ nor  $[w_it_i]=[t_jw_j]$. A decomposition~\eqref{eq_Z} with this property is unique up to permuting the addends and replacing each addend~$q_i[uvw_it_i]$ by $-q_i[uvt_iw_i]$. Consider a well-defined positive number $q(Z_{[uv]})=\sum_{i=1}^k|q_i|$.  Equivalently, $q(Z_{[uv]})$ is the sum of absolute values of the coefficients in~$Z$ of all simplices containing the edge~$[uv]$. Since $Z_{[uv]}$ is a nonzero $1$-cycle, we see that  $q(Z_{[uv]})\ge 3$. 

Let $C$ be a $1$-chain. We shall assign to~$C$ a weighted oriented graph~$\Gamma_C$. This graph consists of all oriented edges~$[w_1w_2]$ that enter the chain~$C$ with positive coefficients, which are called the \textit{weights\/} of the corresponding edges. Obviously, $C$ is a cycle if and only if, for every vertex $w$, the sum of weights
of incoming edges is equal to the sum of weights of outgoing edges. This construction motivates the following definition.
A sequence of pairwise distinct vertices $w_1,\ldots,w_r$ is called a \textit{directed simple cycle\/} in~$C$ if the edges $[w_1w_2]$, $[w_2w_3],\ldots,[w_{r-1}w_r]$, and $[w_rw_1]$ enter the $1$-chain~$C$ with positive coefficients. The following lemma is straightforward.

\begin{lem}\label{lem_dircycl}
Any $1$-cycle contains a directed simple cycle.
\end{lem}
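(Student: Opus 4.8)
The plan is to prove Lemma~\ref{lem_dircycl} directly from the characterization of cycles in terms of the weighted oriented graph~$\Gamma_C$ introduced just above the statement. Recall that $C$ being a $1$-cycle means that at every vertex the total weight of incoming edges equals the total weight of outgoing edges. The key observation I would exploit is that a $1$-cycle, being nonzero, must have at least one edge of positive weight, hence $\Gamma_C$ is nonempty, and the balance condition forces the graph to have no ``sinks'' or ``sources.''

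First I would argue that every vertex of~$\Gamma_C$ that has at least one incoming edge also has at least one outgoing edge, and vice versa. Indeed, since all weights are positive, if a vertex~$w$ had an incoming edge but no outgoing edge, the sum of incoming weights would be strictly positive while the sum of outgoing weights would be~$0$, contradicting the balance condition. Symmetrically for outgoing edges. In particular, starting from any vertex incident to an edge of~$\Gamma_C$, we can always continue along an outgoing edge.

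Next I would construct a directed simple cycle by a standard greedy walk argument. Pick any edge of~$\Gamma_C$, say $[w_1 w_2]$, which exists because $C\neq 0$. By the previous step, from~$w_2$ there is an outgoing edge $[w_2 w_3]$, and we may continue this way, producing a directed walk $w_1, w_2, w_3, \ldots$ that never gets stuck. Since $\Gamma_C$ has only finitely many vertices, this walk must eventually revisit a vertex; let $w_j = w_i$ be the first repetition, with $i<j$. Then $w_i, w_{i+1}, \ldots, w_{j-1}$ are pairwise distinct and the edges $[w_i w_{i+1}], \ldots, [w_{j-2}w_{j-1}], [w_{j-1}w_i]$ all enter~$C$ with positive coefficients, which is exactly a directed simple cycle in~$C$ in the sense of the definition above.

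I do not expect any serious obstacle here: the statement is flagged as ``straightforward,'' and the only point requiring a little care is verifying that the walk can always be extended (the no-sink property), which follows immediately from positivity of the weights together with the balance condition. The finiteness of the vertex set then guarantees termination of the walk and the existence of the first repeated vertex, completing the argument.
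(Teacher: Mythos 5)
Your proposal is correct and matches the intended argument: the paper states Lemma~\ref{lem_dircycl} without proof (calling it straightforward), and the justification it has in mind is precisely your combination of the weight-balance condition on~$\Gamma_C$ (no sinks, since all weights are positive) with the greedy walk and first-repeated-vertex argument. There are no gaps: positivity of the weights plus the balance condition guarantees the walk never gets stuck, and finiteness of the vertex set forces a repetition, whose first occurrence yields the required directed simple cycle.
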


\begin{proof}[Proof of Lemma~\ref{lem_step}]
The proof is by induction on~$q(Z_{[uv]})$. To get the base of the induction we first consider the case~$q(Z_{[uv]})=3$. Then 
$$
Z_{[uv]}=[w_1w_2]+[w_2w_3]+[w_3w_1]
$$
for some pairwise distinct vertices~$w_1$, $w_2$, $w_3$. Hence,
$$
Z=[uvw_1w_2]+[uvw_2w_3]+[uvw_3w_1]+A,
$$
where $A$ is a sum of $3$-simplices that do not contain the edge~$[uv]$. Consider the $4$-simplex $\eta=[uvw_1w_2w_3]$ and the $3$-cycle
$$
Z'=Z-\partial\eta=[uw_1w_2w_3]-[vw_1w_2w_3]+A.
$$
Let $K'$ be the support of~$Z'$. Every vertex of~$\eta$ is a vertex of~$K$ and every edge of~$\eta$ is an edge of~$K$. Hence every vertex of~$K'$ is a vertex of~$K$ and every edge of~$K'$ is an edge of~$K$. On the other hand, we see that the cycle~$Z$ is a sum of simplices that do not contain the edge~$[uv]$, hence, $[uv]\notin K'$. Therefore, either $u\notin K'$ and then the number of vertices of~$K'$ is less than~$m$, or $u\in K'$ and then the the number of vertices of~$K'$ is less or equal to~$m$ and the degree of~$u$ is less than~$p$. Consequently, we know that the assertion of Theorem~\ref{theorem_alg4} holds for the cycle~$Z'$, that is, the element~$V_{Z'}$ is integral over the ring~$R_{K'}$. The element~$V_{\partial\eta}$ is the universal volume of the simplex~$\eta$. The Cayley--Menger formula~\eqref{eq_CMvol} implies that $V_{\partial\eta}$ is integral over the $\Q$-algebra~$R_{\eta}$ generated by the squares of the universal lengths of edges of~$\eta$. Since all edges of~$K'$ and all edges of~$\eta$ belong to~$K$, we have $R_{K'}\subset R_K$ and $R_{\eta}\subset R_K$. Therefore, the elements~$V_{Z'}$ and~$V_{\partial\eta}$ are integral over~$R_K$, hence, the element~$V_Z=V_{Z'}+V_{\partial\eta}$ is also integral over~$R_K$. This is the base of the induction.

Now we assume that Lemma~\ref{lem_step} is proved for $q(Z_{[uv]})<q$ and we
prove it for $q(Z_{[uv]})=q$. By Lemma~\ref{lem_dircycl} there is a directed simple cycle $w_1,\ldots,w_r$ in~$Z_{[uv]}$. 
For $j=1,\ldots,r-2$, we put $\eta_j=[uvw_jw_{j+1}w_{j+2}]$, $Z_j=Z-\partial\eta_j$, and $K_j=\supp(Z_j)$. Every vertex of~$\eta_j$ is a vertex of~$K$. Hence, every vertex of~$K_j$ is a vertex of~$K$. Therefore, the number of vertices of~$K_j$ is not greater than~$m$. Besides, the edges $[uv]$, $[uw_j]$, $[uw_{j+1}]$, and~$[uw_{j+2}]$ belong to~$K$. Hence, every edge that belongs to~$K_j$ and contains~$u$ belongs to~$K$. Therefore, the degree of the vertex~$u$ in~$K_j$ is not greater than~$p$. We consider two cases:

1. Either the number of vertices of~$K_j$ is smaller than~$m$, or the number of vertices of~$K_j$ is equal to~$m$ and the smallest vertex degree in~$K_j$ is less than~$p$. Then by the assumption of Lemma~\ref{lem_step}, Theorem~\ref{theorem_alg4} holds for~$Z_j$. Therefore, the element~$V_{Z_j}$ is integral over~$R_{K_j}$.

2. The number of vertices of~$K_j$ is equal to~$m$ and the smallest vertex degree in~$K_j$ is equal to~$p$. Then $u$ is a minimal degree vertex of~$K_j$. We have
$$
(Z_j)_{[uv]}=Z_{[uv]}-[w_jw_{j+1}]-[w_{j+1}w_{j+2}]+[w_jw_{j+2}].
$$
The edges~$[w_jw_{j+1}]$ and~$[w_{j+1}w_{j+2}]$ enter the $1$-cycle~$Z_{[uv]}$ with positive coefficients. Hence the absolute value of either of their coefficients in~$(Z_j)_{[uv]}$ becomes $1$ smaller than in~$Z_{[uv]}$. The absolute value of the coefficient of the edge~$[w_jw_{j+2}]$ in~$(Z_j)_{[uv]}$ becomes either $1$ smaller than in~$Z_{[uv]}$, or $1$ larger than in~$Z_{[uv]}$. Therefore, $q\bigl((Z_j)_{[uv]}\bigr)$ is either $q(Z_{[uv]})-1$ or $q(Z_{[uv]})-3$, hence, it is smaller than~$q(Z_{[uv]})$. By the induction assumption, for some~$s_j\ge 0$, the element~$l_{uv}^{s_j}V_{Z_j}$ is integral over~$R_{K_j}$.

We see that in both cases for some~$s_j\ge 0$, the element~$l_{uv}^{s_j}V_{Z_j}$ is integral over~$R_{K_j}$. 
 All edges of~$K_j$ are edges of~$K$ possibly except the edge~$[w_{j}w_{j+2}]$. Hence $R_{K_j}\subset R_K[l_{w_jw_{j+2}}]$. Similarly, all edges of the simplex~$\eta_j$ are edges of~$K$ possibly except the edge~$[w_{j}w_{j+2}]$. Hence $R_{\eta_j}\subset R_K[l_{w_jw_{j+2}}]$ Therefore, the elements~$l_{uv}^{s_j}V_{Z_j}$ and $V_{\partial\eta_{j}}$ are integral over~$R_K[l_{w_jw_{j+2}}]$. We have $V_Z=V_{Z_j}+V_{\partial\eta_{j}}$, hence, the element $l_{uv}^{s_j}V_Z$ is integral over~$R_K[l_{w_jw_{j+2}}]$. Consequently, the element~$V_Z$ is integral over the ring~$R_K\left[l_{w_jw_{j+2}},\frac{1}{l_{uv}}\right]$.
If $r=3$, then $[w_jw_{j+2}]$ is an edge of~$K$ and the assertion of Lemma~\ref{lem_step} follows.

Suppose, $r\ge 4$. We need to prove that, for some~$s\ge 0$, the element~$l_{uv}^sV_Z$ is integral over~$R_K$. Equivalently, we need to prove that the element~$V_Z$ is integral over the ring~$R_K\bigl[\frac{1}{l_{uv}}\bigr]$. We shall prove that every place~$\varphi:\Q(x_K)\to F\cup\{\infty\}$ that is finite on~$R_K\bigl[\frac{1}{l_{uv}}\bigr]$ is finite on~$V_Z$. Since the place~$\varphi$ is finite on~$\Q\subset R_K$, we see that the field~$F$ has zero characteristic. The edges~$[uw_j]$, $[vw_j]$, $[w_jw_{j+1}]$, $j=1,\ldots,r$ $(j+1 \mod r)$, and $[uv]$ belong to~$K$. Hence the place~$\varphi$ is finite on their squares of lengths. Since~$\varphi\bigl(\frac{1}{l_{uv}}\bigr)\ne\infty$, we also have $\varphi(l_{uv})\ne 0$. Therefore, we can apply Lemma~\ref{lem_place2} to the points~$u$, $v$, $w_1,\ldots,w_r$. We obtain that for some~$j$, $\varphi(l_{w_jw_{j+2}})$ is finite, $j=1,\ldots,r-2$. Then the place~$\varphi$ is finite on the ring~$R_K\left[l_{w_jw_{j+2}},\frac{1}{l_{uv}}\right]$. Since the element~$V_Z$ is integral over this ring, we obtain that the place~$\varphi$ is finite on~$V_Z$. This completes the proof of the induction step.
\end{proof}

\section{Constructive proof of  Lemma~\ref{lem_step}}\label{section_constr}

The element~$l_{uv}^sV_Z$ is integral over~$R_K$ if and only if there exists a polynomial relation
of the form
\begin{equation}\label{eq_Sabuv}
Q(V_Z,l_K)=l_{uv}^sV^N_Z+a_1(l_K)V^{N-1}_Z+\ldots + a_N(l_K)=0,
\end{equation}
where $l_K$ denotes the set of all variables $l_{ww'}=l_{w'w}$ corresponding to edges $[ww']\in K$ and $a_j$ are polynomials in the variables~$l_{ww'}$ with rational coefficients.
In this section we shall give a constructive proof of Lemma~\ref{lem_step}. This means that
we shall explicitly reduce the problem of finding the polynomial relation~\eqref{eq_Sabuv} to the problem of finding Sabitov polynomials~\eqref{eq_Sab} for cycles whose supports either have less than $m$ vertices or have $m$ vertices with the smallest vertex degree less than~$p$.
As in the previous section, we proceed by induction on~$q(Z_{[uv]})$.
 Let $w_1,w_2,\ldots,w_r$ be a directed simple cycle in~$Z_{[uv]}$. 
For $j=1,\ldots,r-2$, we put $\eta_j=[uvw_jw_{j+1}w_{j+2}]$, $Z_j=Z-\partial\eta_j$, and $K_j=\supp(Z_j)$. In the previous section we have shown that for every~$j$, one of the following three statements holds:
\begin{itemize}
\item $K_j$ has less than~$m$ vertices; \item
 $K_j$ has $m$ vertices and the smallest  degree of a vertex in~$K_j$ is less than~$p$; \item $K_j$ has $m$ vertices, the smallest degree of a vertex in~$K_j$ is equal to~$p$, and $q\left((Z_j)_{[uv]}\right)<q(Z_{[uv]})$.   
\end{itemize}
In the first two cases the assumption of Lemma~\ref{lem_step} yields that we have a Sabitov polynomial for~$Z_j$, that is, a polynomial relation~$Q_j(V_{Z_j},l_{K_j})=0$ that is monic with respect to~$V_{Z_j}$. In the third case the inductive assumption yields that we have a polynomial relation~$Q_j(V_{Z_j},l_{K_j})=0$ with the coefficient of the largest power of~$V_{Z_j}$ equal to~$l_{uv}^{s_j}$ for some $s_j\ge 0$. Putting~$s_j=0$ in the first two cases, we obtain that in all cases we have the polynomial relation of the form
\begin{equation}\label{eq_Zj(1)}
l_{uv}^{s_j}V_{Z_j}^{N_j}+a_{j,1}(l_{K_j})V_{Z_j}^{N_j-1}+\cdots+a_{j,N_j}(l_{K_j})=0,
\end{equation}
where $a_{j,i}$ are polynomials in~$l_{K_j}$ with rational coefficients.
Following~\cite{Sab96} we introduce notation~$d_j=l_{w_jw_{j+2}}$ and $D_j=l_{w_1w_j}$. All edges of~$K_j$ are edges of~$K$ except possibly the edge~$[w_jw_{j+2}]$. Hence relation~\eqref{eq_Zj(1)} can be rewritten in the form
\begin{equation}\label{eq_Zj(2)}
l_{uv}^{s_j}V_{Z_j}^{N_j}+a_{j,1}(l_{K},d_j)V_{Z_j}^{N_j-1}+\cdots+a_{j,N_j}(l_{K},d_j)=0.
\end{equation}
We have $V_Z=V_{Z_j}+V_{\partial\eta_j}$. Hence we can rewrite~\eqref{eq_Zj(2)} in the form
\begin{equation}\label{eq_Zj(3)}
l_{uv}^{s_j}V_{Z}^{N_j}+\tilde a_{j,1}(l_{K},d_j,V_{\partial\eta_j})V_{Z}^{N_j-1}+\cdots+\tilde a_{j,N_j}(l_{K},d_j,V_{\partial\eta_j})=0,
\end{equation}
where $\tilde a_{j,i}$ are polynomials in~$l_K$, $d_j$, and~$V_{\partial\eta_j}$ with rational coefficients. 
On the other hand, the Cayley--Menger formula~\eqref{eq_CMvol} yields
\begin{eqnarray}\label{eq_CM_etaj}
V_{\partial\eta_j}^2+\frac{1}{9216}\left|
\begin{array}{cccccc}
0 & 1 & 1 & 1 & 1 & 1\\
1 & 0 & l_{uv} & l_{uw_j} & l_{uw_{j+1}} & l_{uw_{j+2}}\\
1 & l_{uv} & 0  & l_{vw_j} & l_{vw_{j+1}} & l_{vw_{j+2}}\\
1 & l_{uw_j}  & l_{vw_j} & 0 & l_{w_jw_{j+1}} & d_j\\
1 & l_{uw_{j+1}}  & l_{vw_{j+1}} & l_{w_jw_{j+1}} & 0 & l_{w_{j+1}w_{j+2}}\\
1 & l_{uw_{j+2}}  & l_{vw_{j+2}} & d_j & l_{w_{j+1}w_{j+2}} & 0
\end{array}
\right|=0.
\end{eqnarray}
Exclude the variable~$V_{\partial\eta_j}$ from~\eqref{eq_Zj(3)} and~\eqref{eq_CM_etaj} by means of the resultant. This means that we take the resultant of the left-hand sides of~\eqref{eq_Zj(3)} and~\eqref{eq_CM_etaj} with respect to the variable~$V_{\partial\eta_j}$. It can be easily seen that we obtain a relation of the form
\begin{equation*}
\refstepcounter{equation}\label{eq_Z(j)}
l_{uv}^{2s_j}V_{Z}^{2N_j}+b_{j,1}(l_{K},d_j)V_{Z}^{2N_j-1}+\cdots + b_{j,2N_j}(l_{K},d_j)=0,\eqno{(\theequation_j)}
\end{equation*}
where $b_{j,i}$ are polynomials in~$l_K$ and~$d_j$ with rational coefficients.

Further we shall work with various polynomial relations among elements of the set~$l_K$ and the elements~$d_j$, $D_j$, and~$V_Z$. It is convenient to understand any such relation as a polynomial relation among the elements~$d_j$, $D_j$, and~$V_Z$ with coefficients being polynomials in~$l_K$. So speaking about a coefficient of some monomial, say~$d_2^2D_3^2V_Z^2$, we always understand that this coefficient is a polynomial in~$l_K$. To avoid confusion, further in this section we use notation~$l_{ww'}$ for the squares of lengths of edges of~$K$ only and use~$d_j$, $D_j$ for the squares of lengths of diagonals of~$K$.

Suppose, $r=3$. Then put $j=1$ and notice that $[w_1w_3]$ is an edge of~$K$, hence,  the variable $d_1=l_{w_1w_3}$ is contained in~$l_K$. Therefore, (\ref{eq_Z(j)}${}_1$) is the required polynomial relation. 

Suppose, $r=4$. Then we have two relations~(\ref{eq_Z(j)}${}_1$) and~(\ref{eq_Z(j)}${}_2$).
Let $k_1$ be the degree of~(\ref{eq_Z(j)}${}_1$) with respect to~$d_1$ and let $k_2$ be the degree of~(\ref{eq_Z(j)}${}_2$) with respect to~$d_2$. Besides, we have the Cayley--Menger relation for the six $4$-dimensional points~$u$, $v$, $w_1$, $w_2$, $w_3$, and~$w_4$:
\begin{eqnarray}\label{eq_CM_4d1d2}
\left|
\begin{array}{ccccccc}
0 & 1 & 1 & 1 & 1 & 1 & 1\\
1 & 0 & l_{uv} & l_{uw_1} & l_{uw_2} & l_{uw_3} & l_{uw_4}\\
1 & l_{uv} & 0  & l_{vw_1} & l_{vw_2} & l_{vw_3} & l_{vw_4}\\
1 & l_{uw_1}  & l_{vw_1} & 0 & l_{w_1w_2} & d_1 & l_{w_1w_4}\\
1 & l_{uw_2}  & l_{vw_2} & l_{w_1w_2} & 0 & l_{w_2w_3} & d_2\\
1 & l_{uw_3}  & l_{vw_3} & d_1 & l_{w_2w_3} & 0 & l_{w_3w_4}\\
1 & l_{uw_4}  & l_{vw_4}  & l_{w_1w_4} & d_2  & l_{w_3w_4} & 0
\end{array}
\right|=2l_{uv}d_1^2d_2^2+\ldots=0.
\end{eqnarray}
The degree of the Cayley--Menger polynomial~\eqref{eq_CM_4d1d2} with respect to either of the variables~$d_1$ and~$d_2$ is equal to~$2$ and the coefficient of the monomial~$d_1^2d_2^2$ is equal to~$2l_{uv}$. The resultant of~\eqref{eq_CM_4d1d2} and~(\ref{eq_Z(j)}${}_2$) with respect to the variable~$d_2$ is the $(k_2+2)\times (k_2+2)$-determinant
{\begin{small}
$$
\left|
\begin{array}{ccccccc}
2l_{uv}d_1^2+\ldots & *  & \cdots & 0 & 0 & 0\\ 
0& 2l_{uv}d_1^2+\ldots &  \cdots & 0 & 0 & 0\\
\vdots &  \vdots & \ddots &\vdots&\vdots&\vdots\\
0 & 0 &  \cdots & 2l_{uv}d_1^2+\ldots & * & *\\
* & * &  \cdots & * & l_{uv}^{2s_2}V_Z^{2N_2}+\ldots & 0\\
0 & * &  \cdots & * & * & l_{uv}^{2s_2}V_Z^{2N_2}+\ldots 
\end{array}
\right|
$$
\end{small}}\\
The dots in the first $k_2$ diagonal entries stand for terms whose degrees with respect to~$d_1$ is less than~$2$, the  dots in the last $2$ diagonal entries stand for terms whose degrees with respect to~$V_Z$ is less than~$2N_2$.
The variable~$V_Z$ does not enter the first $k_2$ rows. For all entries in the last two rows except the diagonal ones, their  degrees with respect to~$V_Z$ are strictly less than~$2N_2$.
On the other hand, the variable~$d_1$ does not enter the last two rows and for every entry in the first $k_2$ rows, its degree  with respect to~$d_1$ is not greater than~$2$. Therefore, we obtain the polynomial relation of the form 
\begin{equation}\label{eq_VZd_1}
c_0(l_K,d_1)V_Z^{4N_2}+c_1(l_K,d_1)V_Z^{4N_2-1}+\ldots+c_{4N_2}(l_K,d_1)=0,
\end{equation}
where~$c_i$ are the polynomials in~$l_K$ and~$d_1$ such that their degrees with respect to~$d_1$ are not greater than~$2k_2$ and
$$
c_0(l_K,d_1)=2^{k_2}l_{uv}^{4s_2+k_2}d_1^{2k_2}+\ldots,
$$
where dots stand for terms whose degrees with respect to~$d_1$ are smaller than~$2k_2$. Hence rewriting the left-hand side of~\eqref{eq_VZd_1} as a polynomial in~$d_1$, we shall obtain that its leading coefficient is $2^{k_2}l_{uv}^{4s_2+k_2}V_Z^{4N_2}$ plus terms of smaller degrees with respect to~$V_Z$. On the other hand, rewriting the left-hand side of~(\ref{eq_Z(j)}${}_1$) as a polynomial in~$d_1$, we shall obtain that the free term is the polynomial in~$V_Z$ with leading term~$l_{uv}^{2s_1}V_Z^{2N_1}$, while the coefficient of every positive power of~$d_1$ is a polynomial in~$V_Z$ of degree strictly less than~$2N_1$. Therefore, the resultant of the left-hand sides of~\eqref{eq_VZd_1} and~(\ref{eq_Z(j)}${}_1$) is the polynomial in~$V_Z$ with leading term $2^{k_1k_2}l_{uv}^{4k_2s_1+4k_1s_2+k_1k_2}V_Z^{4k_2N_1+4k_1N_2}$. Thus we obtain the required polynomial relation
$$
l_{uv}^{s}V_Z^{N}+a_1(l_K)V_Z^{N-1}+\ldots+a_{N}(l_K)=0,
$$
where $s=4k_2s_1+4k_1s_2+k_1k_2$ and $N=4k_2N_1+4k_1N_2$.

Now suppose, $r\ge 5$. For $j=2,\ldots,r-2$, consider the Cayley--Menger relation for the $6$ points~$u$, $v$, $w_1$, $w_j$, $w_{j+1}$, and $w_{j+2}$:
\begin{equation*}\refstepcounter{equation}\label{eq_CMj}
\left|
\begin{array}{ccccccc}
0 & 1 & 1 & 1 & 1 & 1 & 1\\
1 & 0 & l_{uv} & l_{uw_1} & l_{uw_j} & l_{uw_{j+1}} & l_{uw_{j+2}}\\
1 & l_{uv} & 0  & l_{vw_1} & l_{vw_j} & l_{vw_{j+1}} & l_{vw_{j+2}}\\
1 & l_{uw_1}  & l_{vw_1} & 0 & D_j & D_{j+1} & D_{j+2}\\
1 & l_{uw_j}  & l_{vw_j} & D_j & 0 & l_{w_jw_{j+1}} & d_j\\
1 & l_{uw_{j+1}}  & l_{vw_{j+1}} & D_{j+1} & l_{w_jw_{j+1}} & 0 & l_{w_{j+1}w_{j+2}}\\
1 & l_{uw_{j+2}}  & l_{vw_{j+2}}  & D_{j+2} & d_j  & l_{w_{j+1}w_{j+2}} & 0
\end{array}
\right|=0.
\eqno(\theequation_j)
\end{equation*}
We shall need the following properties of the left-hand sides of this relations: 
\begin{description}
\item[\normalfont(CM${}_j$-1)] The degree with respect to~$d_j$ is equal to~$2$;
\item[\normalfont(CM${}_j$-2)] The total degree with respect to~$D_{j}$, $D_{j+1}$, and~$D_{j+2}$ is equal to~$2$;
\item[\normalfont(CM${}_j$-3)] The coefficient of the monomial~$d_j^2D_{j+1}^2$ is equal to~$2l_{uv}$;
\item[\normalfont(CM${}_j$-4)] No monomial is divisible by~$d_j^2D_j$.
\end{description}
Using resultants we shall exclude successively the variables~$D_4,\ldots,D_{r-2}$ from equations~(\ref{eq_CMj}${}_3$)--(\ref{eq_CMj}${}_{r-2}$). First, we shall exclude~$D_4$ from equations~(\ref{eq_CMj}${}_3$) and~(\ref{eq_CMj}${}_4$); second, we shall exclude~$D_5$ from the obtained equation and equation~(\ref{eq_CMj}${}_5$), and so on. The equation obtained after excluding~$D_{j}$ will have the form
\begin{equation*}\refstepcounter{equation}\label{eq_Frel}
F_j(l_K,D_3,D_{j+1},D_{j+2},d_3,\ldots ,d_j)=0.\eqno{(\theequation_j)}
\end{equation*}
We start from equation~(\ref{eq_Frel}${}_3$)=(\ref{eq_CMj}${}_3$). The equation~(\ref{eq_Frel}${}_j$) is the resultant of the equations~(\ref{eq_Frel}${}_{j-1}$) and~(\ref{eq_CMj}${}_j$) with respect to the variable~$D_j$, $j=4,\ldots,r-2$. We need the following properties of the polynomials~$F_j$:
\begin{description}
\item[\normalfont($F_j$-1)] The degree of~$F_j$ with respect to each~$d_i$ is equal to~$2^{j-2}$, $i=3,\ldots,j$; 
\item[\normalfont($F_j$-2)] The total degree of~$F_j$ with respect to~$D_3$, $D_{j+1}$, and~$D_{j+2}$ is equal to~$2^{j-2}$;
\item[\normalfont($F_j$-3)] The coefficient of the monomial~$(d_3\ldots d_jD_{j+1})^{2^{j-2}}$ in~$F_j$ is equal to~$(2l_{uv})^{2^{j-3}(j-2)}$.
\item[\normalfont($F_j$-4)] No monomial of~$F_j$ is divisible by~$d_3^{2^{j-2}}D_3$.
\end{description}
We shall prove inductively these four properties of~$F_j$. 
For $j=3$, they coincide with properties (CM${}_3$-1)--(CM${}_3$-4).
Assume ($F_{j-1}$-1)--($F_{j-1}$-4) and prove \mbox{($F_{j}$-1)}--($F_{j}$-4), $j\ge 4$. The resultant of~(\ref{eq_Frel}${}_{j-1}$) and~(\ref{eq_CMj}${}_j$) with respect to the variable~$D_j$ is a $(2^{j-3}+2)\times (2^{j-3}+2)$-determinant
\begin{eqnarray}\label{eq_detAB}
\left|
\begin{array}{cccccc}
A & * & * & * & \cdots & 0\\
0 & A & * & * & \cdots & *\\
* & * & B & 0 &\cdots & 0\\
0 & * & * & B &\cdots & 0\\
\vdots &  \vdots & \vdots &  \vdots & \ddots & \vdots\\
0 & 0 & 0 & 0 & \cdots & B
\end{array}
\right|
\end{eqnarray}
where 
\begin{gather*}
A=(2l_{uv})^{2^{j-4}(j-3)}(d_3\ldots d_{j-1})^{2^{j-3}}+\ldots;\\
B=2l_{uv}d_j^2D_{j+1}^2+\ldots.
\end{gather*}
The dots stand for terms that have smaller total degree with respect to $d_3,\ldots,d_{j-1}$ for~$A$ and with respect to~$d_j$ and~$D_{j+1}$ for~$B$.  Let us prove  property~($F_j$-3). The variable~$d_{j}$ does not enter the first two rows of determinant~\eqref{eq_detAB}. It follows from (CM${}_j$-4) that  the degree  with respect to~$d_j$ of every non-diagonal entry is not greater than~$1$. Hence the power~$d_{j}^{2^{j-2}}$ can appear in the product~$A^2B^{2^{j-3}}$ of the diagonal entries of determinant~\eqref{eq_detAB} only. Besides, it follows from \mbox{($F_{j-1}$-2)} that $D_{j+1}$ does not enter into~$A$. Therefore, the coefficient of the monomial~$(d_3\ldots d_jD_{j+1})^{2^{j-2}}$ in~$F_j$ is equal to
$$
\left((2l_{uv})^{2^{j-4}(j-3)}\right)^2(2l_{uv})^{2^{j-3}}=(2l_{uv})^{2^{j-3}(j-2)}.
$$
Properties~($F_j$-1) and~($F_j$-2) can be obtained from properties ($F_{j-1}$-1), ($F_{j-1}$-2), (CM${}_j$-1), and~(CM${}_j$-2)  by a standard computation of degrees of the resultant.
Now we shall prove property~($F_j$-4). The variables~$d_3$ and~$D_3$ enter the first two rows of determinant~\eqref{eq_detAB} only. If $F_j$ contained a monomial divisible by~$d_3^{2^{j-2}}D_3$, then the product of some two entries of the first two rows of~\eqref{eq_detAB} would be divisible by~$d_3^{2^{j-2}}D_3$. Then one of these two entries would be divisible either by~$d_3^{2^{j-3}}D_3$, or by~$d_3^{2^{j-3}+1}$. This is impossible. Hence $F_j$ contains no monomial divisible by~$d_3^{2^{j-2}}D_3$.

Since~$D_r=l_{w_1w_r}$ is a squared edge length of~$K$, we can omit~$D_r$ in the list of arguments of~$F_{r-2}$. 
Next, we shall exclude successively the variables $d_3,\ldots,d_{r-2}$ from equation~(\ref{eq_Frel}${}_{r-2}$) and equations~(\ref{eq_Z(j)}${}_3$)--(\ref{eq_Z(j)}${}_{r-2}$). 
The equation obtained after excluding~$d_j$ will have the form
\begin{equation*}\refstepcounter{equation}\label{eq_Grel}
G_j(V_Z,l_K,D_3,D_{r-1},d_{j+1},\ldots ,d_{r-2})=0.\eqno{(\theequation_j)}
\end{equation*}
Equation~(\ref{eq_Grel}${}_2$) is equation~(\ref{eq_Frel}${}_{r-2}$) divided by~$2^{r-5}(r-4)$. Equation~(\ref{eq_Grel}${}_j$) is the resultant of  equations~(\ref{eq_Grel}${}_{j-1}$) and~(\ref{eq_Z(j)}${}_j$) with respect to the variable~$d_j$, $j=3,\ldots,r-2$. We need the following properties of the polynomials~$G_j$, $j\ge 3$. There exists a positive integer~$T_j$ such that
\begin{description}
\item[\normalfont($G_j$-1)] The degree of~$G_j$ with respect to each variable~$d_i$ is equal to~$T_j$, $i=j+1,\ldots,r-2$; 
\item[\normalfont($G_j$-2)] The total degree of~$G_j$ with respect to~$D_3$ and $D_{r-1}$ is equal to~$T_j$;
\item[\normalfont($G_j$-3)] If~$M_j$ is the degree of~$G_j$ with respect to~$V_Z$, then the coefficient of the monomial~$(d_{j+1}\ldots d_{r-2}D_{r-1})^{T_j}V_Z^{M_j}$ in~$G_j$ is equal to~$l_{uv}^{S_j}$ for some $S_j>0$;
\item[\normalfont($G_j$-4)] $G_j$ contains no monomial divisible by~$D_3V_Z^{M_j}$.
\end{description}
Notice that $G_2=\frac{1}{2^{r-5}(r-4)}F_{r-2}$ and  properties~($G_2$-1)--($G_2$-3) hold for $M_2=0$, $S_2=2^{r-5}(r-4)$, and $T_2=2^{r-4}$, nevertheless, property~($G_2$-4) does not hold. We shall prove inductively properties~($G_j$-1)--($G_j$-4) for~$j\ge 3$. Assume them for~$j-1$ (except property~($G_2$-4) if~$j=3$) and prove them for~$j$.
The resultant of~(\ref{eq_Grel}${}_{j-1}$) and~(\ref{eq_Z(j)}${}_j$) with respect to the variable~$d_j$ is a $(k_j+T_{j-1})\times (k_j+T_{j-1})$-determinant
\begin{eqnarray}\label{eq_detAB2}
\begin{vmatrix}
A & * & \hdotsfor{5} & 0\\
0 & A & \hdotsfor{5} & 0\\
\vdots & \vdots & \ddots & \vdots & \vdots & \vdots &\ddots &\vdots\\
0 & 0 & \cdots & A & * & * & \cdots & *\\
* & * & \cdots & * & B & 0 &\cdots & 0\\
0 & * & \cdots & * & * & B &\cdots & 0\\
\vdots &  \vdots & \vdots &  \vdots & \vdots & \vdots & \ddots & \vdots\\
0 & 0 & \hdotsfor{5} & B
\end{vmatrix}
\begin{array}{l}
\left.
\vphantom{\begin{vmatrix}
A\\ A \\ \vdots \\ A
\end{vmatrix}}
\right\}k_j\\
\left.
\vphantom{\begin{vmatrix}
A\\ A \\ \vdots \\ A
\end{vmatrix}}
\right\}T_{j-1}
\end{array}
\end{eqnarray}
where $k_j$ is the degree of the left-hand side of~(\ref{eq_Z(j)}${}_j$) with respect to~$d_j$,
\begin{gather*}%\label{eq_A}
A=l_{uv}^{S_{j-1}}(d_{j+1}\ldots d_{r-2}D_{r-1})^{T_{j-1}}V_Z^{M_{j-1}}+\ldots,\\
B=l_{uv}^{2s_j}V_Z^{2N_j}+\ldots
\end{gather*}
The dots stand for terms of smaller total degrees with respect to $d_{j+1},\ldots,d_{r-2}$, $D_{r-1}$, and~$V_Z$ for~$A$ and for terms of smaller degrees with respect to~$V_Z$ for~$B$.  The degree with respect to~$V_Z$ of every entry in the first $k_j$ rows of~\eqref{eq_detAB2}  is not greater than~$M_{j-1}$. The degree with respect to~$V_Z$ of every entry in the last $T_{j-1}$ rows of~\eqref{eq_detAB2}  is not greater than~$2N_j$. Hence, the degree of the resultant~$G_j$ with respect to~$V_Z$ is not greater than~$M_j=k_jM_{j-1}+2T_{j-1}N_j$. Besides, the degree with respect to~$V_Z$ of every non-diagonal entry in the last $T_{j-1}$ rows is strictly smaller than~$2N_j$. Therefore, the power~$V_Z^{M_j}$ can appear in the product~$A^{k_j}B^{T_{j-1}}$ of the diagonal entries of~\eqref{eq_detAB2} only. Consequently, the coefficient of the monomial~$(d_{j+1}\ldots d_{r-2}D_{r-1})^{k_jT_{j-1}}V_Z^{M_j}$ in~$G_j$ is equal to
$l_{uv}^{k_jS_{j-1}+2T_{j-1}s_j}$. In particular, we see that the degree of~$G_j$ with respect to~$V_Z$ is indeed equal to~$M_j$. Obviously, the degree of~$G_j$ with respect to each~$d_i$ is equal to~$k_jT_{j-1}$, $i=j+1,\ldots,r-2$, and the total degree of~$G_j$ with respect to~$D_3$ and~$D_{r-1}$ is equal to~$k_jT_{j-1}$. Properties \mbox{($G_j$-1)}--\mbox{($G_j$-3)}  follow.

Now we shall prove properties~($G_j$-4), $j\ge 3$. Since property~($G_2$-4) does not hold, we first consider the case $j=3$ to obtain a base for the induction. Property~\mbox{($F_{r-2}$-4)} yields that the polynomial~$G_2$ contains no monomial divisible by~$d_3^{T_2}D_3$. Hence the variable~$D_3$ does not enter into the polynomial~$A$. Obviously, $D_3$ does not enter into~$B$ too. Since we have already proved that the power $V_Z^{M_3}$ can appear only in the product of the diagonal entries of~\eqref{eq_detAB2}, we obtain that no monomial of~$G_3$ is divisible by~$D_3V_Z^{M_3}$. Now suppose that $j\ge 4$ and property~\mbox{($G_{j-1}$-4)} is proved. The variable~$D_3$ enters the first $k_j$ rows of~\eqref{eq_detAB2} only. The variable~$V_Z$ enters the first $k_j$ rows with degrees not exceeding~$M_{j-1}$ and  the last $T_{j-1}$ rows with degrees not exceeding~$2N_j$. Recall that $M_j=k_jM_{j-1}+2T_{j-1}N_j$. Hence if $G_j$ contained a monomial divisible by~$D_3V_Z^{M_j}$, then some monomial in the  first $k_j$ rows would be divisible by~$D_3V_Z^{M_{j-1}}$, which is impossible. Thus  $G_j$ contains no monomial divisible by~$D_3V_Z^{M_j}$.

For brevity we denote the numbers~$M_{r-2}$, $S_{r-2}$, and~$T_{r-2}$ by~$M$, $S$, and~$T$ respectively. 
Polynomial relation~(\ref{eq_Grel}${}_{r-2}$) can be rewritten as
\begin{equation}\label{eq_SMT}
l_{uv}^SV_Z^MD_{r-1}^T+f(V_Z,l_K,D_{r-1})+g(V_Z,l_K,D_3,D_{r-1})=0,
\end{equation}
where the polynomial~$f$ has degree  with respect to~$V_Z$ not greater than $M$ and degree with respect to~$D_{r-1}$ strictly less than~$T$  and the polynomial~$g$ has degree with respect to~$V_Z$ strictly less than~$M$, and the total degree with respect to~$D_3$ and~$D_{r-1}$ not greater than~$T$. 

Now we notice that the initial situation is symmetric with respect to renumbering the vertices of the simple cycle $w_1,\ldots,w_r$ in the opposite direction. This means that we can repeat the above reasoning exchanging everywhere vertices~$w_j$ with vertices~$w_{r-j+2}$, $j=2,\ldots,r$. First, we successively exclude the variables~$D_{r-2},\ldots,D_4$ from equations~(\ref{eq_CMj}${}_{r-3}$)--(\ref{eq_CMj}${}_{2}$). Second, we successively exclude the variables~$d_{r-3},\ldots,d_{2}$ from the obtained equation and equations~(\ref{eq_Z(j)}${}_{r-3}$)--(\ref{eq_Z(j)}${}_{2}$).
Finally, for some positive integers~$\widetilde M$, $\widetilde S$, and~$\widetilde T$, we obtain a relation
\begin{equation}\label{eq_SMT_tilde}
l_{uv}^{\widetilde S}V_Z^{\widetilde M}D_{3}^{\widetilde T}+\tilde{f}(V_Z,l_K,D_{3})+\tilde g(V_Z,l_K,D_3,D_{r-1})=0,
\end{equation}
where the polynomial~$\tilde f$ has degree  with respect to~$V_Z$ not greater than $\widetilde M$ and degree with respect to~$D_3$ strictly less than~$\widetilde T$  and the polynomial~$\tilde g$ has degree with respect to~$V_Z$ strictly less than~$\widetilde M$, and the total degree with respect to~$D_3$ and~$D_{r-1}$ not greater than~$\widetilde T$.

Consider the resultant of the left-hand sides of equations~\eqref{eq_SMT} and~\eqref{eq_SMT_tilde} with respect to~$D_{r-1}$. It can be easily seen that the obtained relation has form
\begin{equation}\label{eq_ABC}
l_{uv}^AV_Z^BD_3^C+\ldots=0,
\end{equation} 
where $B$ and $C$ are the degrees of the left-hand side with respect to~$V_Z$ and~$D_3$ respectively and dots stand for terms of total degrees with respect to~$V_Z$ and~$D_3$ strictly smaller than~$B+C$. Now notice that $D_3=d_1$. Excluding the variable~$D_3=d_1$ from equations~\eqref{eq_ABC} and~(\ref{eq_Z(j)}${}_1$) we obtain the required relation
$$
l_{uv}^{s}V_Z^{N}+a_1(l_K)V_Z^{N-1}+\ldots+a_{N}(l_K)=0.
$$

\section{Proof of Lemma~\ref{lem_alg}}\label{section_alg}

Let $X\subset\C^r$ be an irreducible affine variety. We say that $X$ is \textit{defined over\/}~$\Q$ if it can be given by polynomial equations with rational coefficients. 
For $F=\Q$ or~$\C$, we denote by $F[X]$ the ring of regular functions on~$X$ with coefficients in~$F$ and by~$F(X)$ the field of rational functions on~$X$ with coefficients in~$F$. We start with the following algebraic lemma. It seems to be standard, however, for the convenience of the reader, we give its proof.

\begin{lem}\label{lem_algebraic}
Let $X\subset\C^r$ be an irreducible affine variety defined over~$\Q$. Let $f_1,\ldots,f_p$ be elements of~$\Q[X]$ such that the subset $\{f_1=\cdots=f_p=0\}\subset X$ has codimension at least~$2$. Let $y$ be an element of an extension~$L$ of the field~$\Q(X)$ such that the elements $f_1^{s_1}y,\ldots,f_p^{s_p}y$ are integral over~$\Q[X]$ for some $s_1,\ldots,s_p\ge 0$. Then the element~$y$ is integral over~$\Q[X]$.
\end{lem}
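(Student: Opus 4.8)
The plan is to prove the lemma through the valuative characterization of integrality combined with the normality ($S_2$, or ``algebraic Hartogs'') property of an integral closure. First I would clear away degenerate cases. If some $s_i=0$, then $y=f_i^{0}y$ is already integral over $A:=\Q[X]$ and we are done; and if some $f_i$ vanishes identically on $X$, then $\{f_i=0\}=X$ and deleting this $f_i$ from the list leaves the intersection $\{f_1=\cdots=f_p=0\}$ unchanged. Hence I may assume all $s_i\ge 1$ and all $f_i\ne 0$ in $A$. Since $f_1^{s_1}y$ is integral over $A$, it is algebraic over $K:=\Q(X)$; dividing by $f_1^{s_1}\in K$ shows that $y$ itself is algebraic over $K$, so I may replace $L$ by the finite extension $K':=K(y)$.

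Next I would pass to the normalization. Let $B$ be the integral closure of $A$ in $K'$. Because $A$ is a finitely generated domain over the perfect field $\Q$ and $K'/K$ is finite, $B$ is a module-finite $A$-algebra, hence a Noetherian normal domain with fraction field $K'$; and each element $f_i^{s_i}y$, being integral over $A$, lies in $B$. The goal becomes $y\in B$. For a Noetherian normal domain one has $B=\bigcap_{\operatorname{ht}\mathfrak p=1}B_{\mathfrak p}$, the intersection over all height-one primes, i.e. over the discrete valuation rings $B_{\mathfrak p}$ with their valuations $v_{\mathfrak p}$. Thus it suffices to prove $v_{\mathfrak p}(y)\ge 0$ for every height-one prime $\mathfrak p\subset B$.

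Fix such a $\mathfrak p$ and set $\mathfrak q=\mathfrak p\cap A$. Since $B$ is integral over $A$ and both are equidimensional catenary affine domains over $\Q$, the equality $\dim(B/\mathfrak p)=\dim(A/\mathfrak q)$ together with $\dim B=\dim A$ forces $\operatorname{ht}\mathfrak q=\operatorname{ht}\mathfrak p=1$, so the center $W=V(\mathfrak q)\subset X$ has codimension $1$. Here the hypothesis enters decisively: as $\{f_1=\cdots=f_p=0\}$ has codimension at least $2$, the codimension-one subvariety $W$ cannot be contained in it, so some $f_i$ does not vanish identically on $W$, that is, $f_i\notin\mathfrak q$. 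Then $f_i\in A\setminus\mathfrak q\subset B\setminus\mathfrak p$ is a unit in $B_{\mathfrak p}$, whence $v_{\mathfrak p}(f_i)=0$; since $f_i^{s_i}y\in B\subset B_{\mathfrak p}$ we obtain $v_{\mathfrak p}(y)=v_{\mathfrak p}(f_i^{s_i}y)-s_iv_{\mathfrak p}(f_i)=v_{\mathfrak p}(f_i^{s_i}y)\ge 0$. As $\mathfrak p$ was arbitrary, $y\in B$, so $y$ is integral over $\Q[X]$.

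The conceptual crux, which I expect to be the main point rather than a computation, is the interplay between normality and the codimension hypothesis: the statement is genuinely false without codimension $\ge 2$, since a valuation centered on a codimension-one component of some $\{f_i=0\}$ could let $y$ blow up, and the role of $B$ being normal is precisely to reduce integrality to the height-one valuations, each of whose centers is a codimension-one subvariety that must escape the bad locus $\{f_1=\cdots=f_p=0\}$. The only technical items to verify carefully are the finiteness of the integral closure (where I use $\operatorname{char}\Q=0$) and the height comparison $\operatorname{ht}\mathfrak q=\operatorname{ht}\mathfrak p$, which follows from the equidimensionality and catenarity of affine domains over a field.
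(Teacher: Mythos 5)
Your proof is correct, and it takes a genuinely different (and arguably cleaner) route than the paper's. The paper proceeds in two stages: first, for $y\in\Q(X)$, it passes to the normalization $\widetilde X\to X$ of the \emph{complex} variety, observes that the support of the divisor of poles $(y)_\infty$ lies in $\{f_1=\cdots=f_p=0\}$, so has codimension $\ge 2$ and hence is empty, making $y$ regular on $\widetilde X$ and integral over $\C[X]$; it then descends integrality from $\C[X]$ to $\Q[X]$ by extending places. Second, for general $y$, it reduces to the rational case by applying the first case to the coefficients of the minimal polynomial of $y$ over $\Q(X)$ (each $f_j^{is_j}c_i$ is integral because $f_j^{s_j}y$ is). You instead handle general $y$ in one stroke: you normalize $\Q[X]$ inside the finite extension $K(y)$, invoke $B=\bigcap_{\operatorname{ht}\mathfrak p=1}B_{\mathfrak p}$ for the Noetherian normal domain $B$, and check nonnegativity of each height-one valuation using the fact that its center in $X$ has codimension one and therefore escapes the bad locus. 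The conceptual core is identical in both proofs --- normality localizes the question of poles to codimension one, where the hypothesis guarantees some $f_i$ is invertible --- but your version trades the paper's two auxiliary devices (the reduction to $y\in\Q(X)$ via the minimal polynomial, and the places argument for descending from $\C$ to $\Q$) for two standard commutative-algebra inputs (Noether's finiteness of the integral closure in a finite extension, and the height comparison $\operatorname{ht}\mathfrak q=\operatorname{ht}\mathfrak p$ via catenarity of affine domains). One point you pass over silently, which you should at least note: the codimension hypothesis is stated for the complex variety $X\subset\C^r$, while your primes $\mathfrak q$ live in $\Q[X]$; you need that a height-one prime of $\Q[X]$ cuts out a subset of $X(\C)$ of codimension one, which follows from invariance of dimension of affine algebras under the base change $\Q\subset\C$ --- routine, and it is exactly the $\Q$-versus-$\C$ bookkeeping that the paper disposes of with its place-extension remark.
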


\begin{proof}
First, suppose that $y\in \Q(X)$.
Let  $\pi:\widetilde{X}\to X$ be the normalization of~$X$. Then the ring $\C[\widetilde{X}]$ is the integral closure of~$\C[X]$ in~$\C(X)$. Hence $y$ is a rational function on~$\widetilde{X}$ such that $f_j^{s_j}y$ are regular functions on~$\widetilde{X}$, $j=1,\ldots,p$. Since~$\widetilde{X}$ is normal, the divisor of poles~$(y)_{\infty}$ of~$y$ on~$\widetilde{X}$ is well defined. Let $Y\subset \widetilde{X}$ be the support of~$(y)_{\infty}$. For every~$j$, the function~$f_j^{s_j}y$ is regular on~$\widetilde{X}$, hence, $Y$ is contained in the subset $\{f_j=0\}\subset \widetilde{X}$.  Since $\pi$ is a finite regular morphism, we obtain that the codimension of the subset $\{f_1=\cdots=f_p=0\}\subset\widetilde X$ is the same as the codimension of the subset $\{f_1=\cdots=f_p=0\}\subset X$, that is, is at least~$2$. Therefore, $\codim(Y)\ge 2$. Since $Y$ is the support of the divisor~$(y)_{\infty}$, it follows that $(y)_{\infty}=0$. Hence $y$ is a regular function on~$\widetilde{X}$. Therefore $y$  is integral over~$\C[X]$. 
Since $\C[X]=\Q[X]\otimes\C$ and $\C(X)\supset\Q(X)\otimes\C$, it can be easily seen that every place $\varphi:\Q(X)\to F$ that is finite on~$\Q[X]$ extends to a place $\Phi:\C(X)\to F'$ that is finite on~$\C[X]$, where $F'$ is an extension of~$F$. Hence an element of $\Q(X)$ is integral over~$\Q[X]$ whenever it is integral over~$\C[X]$. 
Thus $y$ is integral over~$\Q$.

Now let $y$ be an element of an arbitrary extension $L\supset\Q(X)$. At least one of the elements $f_j$ is nonzero. Hence the element~$y$ is algebraic over~$\Q(X)$. Then there is a unique irreducible monic polynomial $H\in \Q(X)[t]$ such that $H(y)=0$. Suppose,
$$
H(t)=t^N+c_1t^{N-1}+\ldots+c_N
,\qquad c_i\in \Q(X).$$ 
It is well known that $y$ is integral over~$\Q[X]$ if and only if all coefficients $c_1,\ldots,c_k$ are integral over~$\Q[X]$. The element~$f_j^{s_j}y$ is a root of the irreducible  polynomial
\begin{equation*}
H_j(t)=t^N+f_j^{s_j}c_1t^{N-1}+f_j^{2s_j}c_2t^{N-2}+\ldots +f_j^{Ns_j}c_N.
\end{equation*}
Since the elements $f_j^{s_j}y$ are integral over~$\Q[X]$, $j=1,\ldots,p$, we obtain that the elements $f_j^{ir}c_i$ are integral over~$\Q[X]$, $i=1,\ldots,N$, $j=1,\ldots,p$. But we have already proved the statement of the lemma for the elements~$c_i\in \Q(X)$. Thus the elements $c_1,\ldots,c_N$ are integral over~$\Q[X]$. Hence $y$ is integral over~$\Q[X]$.
\end{proof}

Now let~$K$ be a pure $(n-1)$-dimensional simplicial complex with $m$ vertices and $r$ edges,  $n\ge 2$.
We consider the $mn$-dimensional affine space~$\C^{mn}$ with coordinates~$x_{u,i}$, where $u$ runs over all vertices of~$K$ and $i$ runs from~$1$ to~$n$, and the $r$-dimensional affine space~$\C^r$ with coordinates~$\ell_{uv}=\ell_{vu}$, where $[uv]$ runs over all edges of~$K$. (We denote the independent coordinates in~$\C^r$ by~$\ell_{uv}$ to avoid the confusion with elements~$l_{uv}\in\Q[x_K]$.) Let $\ell_K$ be the set of all variables~$\ell_{uv}$. Let  $h:\C^{mn}\to\C^{r}$ be the regular morphism given by
$$
\ell_{uv}=\sum_{i=1}^n(x_{u,i}-x_{v,i})^2.
$$
The image of~$h$ is the set $\LL_{K}$ of  vectors~$l(P)\in\C^r$ for all polyhedra $P:K\to\C^n$. Let $X_K$ be the Zariski closure of~$\LL_K$ in~$\C^r$. Then $X_K$ is an irreducible affine variety and $h$ is the composition
of the dominant morphism~$h_1:\C^{mn}\to X_K$ and the injective morphism~$h_2:X_K\to\C^r$. Since the morphism~$h$ is defined over~$\Q$, we see that the variety~$X_K$ is defined over~$\Q$ and $\Q[X_K]$ is the $\Q$-subalgebra $R_K\subset\Q[x_K]$ generated by the elements $h^*(\ell_{uv})=l_{uv}$. 

\begin{lem}\label{lem_codim}
Let $u$ be a vertex of~$K$ and let $v_1,\ldots,v_p$ be all vertices of~$K$ joined by edges with~$u$. Suppose, $p\ge 2$. Then the subset $Y\subset X_K$ given by the equations $l_{uv_1}=\cdots=l_{uv_p}=0$ has codimension at least~$2$.  
\end{lem}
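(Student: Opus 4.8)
The plan is to reduce the statement to a transcendence-degree count by deleting the vertex $u$. Let $K'$ be obtained from the $1$-skeleton of $K$ by removing $u$ together with the $p$ edges $[uv_1],\dots,[uv_p]$ issuing from it, and let $X_{K'}\subset\C^{r-p}$ be the variety built from the same construction (vertices and edges only) applied to $K'$. Writing $\C^r=\C^{r-p}\times\C^p$, where the last $p$ coordinates are the squared lengths of the edges $[uv_1],\dots,[uv_p]$ at $u$, I would use the projection $\pi:\C^r\to\C^{r-p}$ forgetting these $p$ coordinates. The first thing to check is that $\pi$ maps $X_K$ dominantly onto $X_{K'}$: restricting a polyhedron $P:K\to\C^n$ to the vertices of $K'$ gives $\pi(l(P))=l(P|_{K'})$, so $\pi(\LL_K)=\LL_{K'}$, and passing to closures yields $\overline{\pi(X_K)}=X_{K'}$. (Conversely every configuration of $K'$ extends to one of $K$ by placing $P(u)$ anywhere, which is what makes $\pi$ dominant.)

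The argument then rests on two estimates. The first is that $Y$ is contained in the coordinate subspace $\C^{r-p}\times\{0\}$, on which $\pi$ restricts to an isomorphism onto $\C^{r-p}$; hence $\pi|_Y$ is an isomorphism of $Y$ onto the subvariety $\pi(Y)\subseteq X_{K'}$, its inverse on the image being $\xi\mapsto(\xi,0)$, and in particular $\dim Y\le\dim X_{K'}$. The second is the bound $\dim X_K-\dim X_{K'}\ge 2$. Since $\Q(X_K)=\Q(X_{K'})(l_{uv_1},\dots,l_{uv_p})$, this difference is the transcendence degree of the $l_{uv_j}$ over $\Q(X_{K'})$, and it would suffice to show that already $l_{uv_1}$ and $l_{uv_2}$ are algebraically independent over $\Q(X_{K'})$. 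Combining the two estimates would give
\[
\codim_{X_K}Y=\dim X_K-\dim Y\ge\dim X_K-\dim X_{K'}\ge 2.
\]

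The algebraic independence in the second estimate is the step carrying the geometric content, so it is the one to watch. The field $\Q(X_{K'})$ is generated by squared lengths not involving the coordinates of $P(u)$, whereas $l_{uv_1}=\sum_i(x_{u,i}-a_{1,i})^2$ and $l_{uv_2}=\sum_i(x_{u,i}-a_{2,i})^2$ do involve them, where $a_1,a_2$ are the images of $v_1,v_2$. A polynomial relation over $\Q(X_{K'})$ would have to persist for all $x_u\in\C^n$ after specializing the remaining coordinates generically; for a generic configuration $a_1\ne a_2$, so the map $x_u\mapsto(\|x_u-a_1\|^2,\|x_u-a_2\|^2)$ has Jacobian of rank $2$ (its rows being $x_u-a_1$ and $x_u-a_2$, linearly independent for generic $x_u$ as soon as $n\ge 2$) and is therefore dominant onto $\C^2$, forcing the relation to be trivial. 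This is precisely where both hypotheses $p\ge 2$ and $n\ge 2$ are used. I would remark that the more naive route—pulling back to $\C^{mn}$, where $\{l_{uv_1}=\dots=l_{uv_p}=0\}$ is a complete intersection of $p$ isotropic quadrics of codimension $p$—runs into the difficulty that $h_1$ is only dominant, not surjective, so one would have to rule out components of $Y$ lying in the non-closed boundary $X_K\setminus\LL_K$; the projection argument avoids this entirely, since $Y$ is literally a section of $\pi$.
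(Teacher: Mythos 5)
Your proof is correct, and it takes a genuinely different route from the paper's. The paper treats the difficulty it flags in Remark~\ref{remark_codim} (controlling the points added by the Zariski closure) analytically: Lemma~\ref{lem_estimate} shows, via the explicit complex shift $P'(u)=P(u)+\varkappa\bar\xi_j$, that a polyhedron with all $|l_{uv_j}(P)|\le\varepsilon$ can be corrected to one where some $l_{uv_j}(P')$ vanishes exactly while every squared edge length moves by at most $3\varepsilon$; Lemma~\ref{lem_sdvig} then approximates, in the analytic topology, an arbitrary point of $Y$ by points of~$\LL_K$ lying on one hypersurface $\{l_{uv_j}=0\}$ but off another; and the proof concludes by showing that a codimension-$1$ component $C$ of~$Y$ would have to be an irreducible component of every $\{l_{uv_i}=0\}$, which the approximating points contradict. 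You replace all of this by two purely algebraic observations: $Y$ lies in the coordinate subspace $\C^{r-p}\times\{0\}$, hence injects into~$X_{K'}$ under the forgetful projection, giving $\dim Y\le\dim X_{K'}$; and $\dim X_K-\dim X_{K'}=\operatorname{trdeg}_{\Q(X_{K'})}\Q(X_K)\ge 2$, since a nonzero polynomial relation between $l_{uv_1}$ and~$l_{uv_2}$ over~$\Q(X_{K'})$ is an identity in~$\Q[x_K]$, survives a generic specialization of the variables $x_{w,i}$, $w\ne u$, and then contradicts the dominance of $x_u\mapsto(l_{uv_1},l_{uv_2})$, whose Jacobian has rank~$2$ generically once $a_{v_1}\ne a_{v_2}$ and $n\ge2$. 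This is sound, and it is exactly where $p\ge 2$ and $n\ge 2$ enter. What your route buys: it avoids the analytic topology and the perturbation lemma entirely; it handles the closure difficulty structurally rather than by approximation, because the inclusion $Y\subseteq\C^{r-p}\times\{0\}$ holds for every point of~$X_K$, realizable or not; and, pushed slightly further, the same Jacobian computation gives the stronger estimate $\codim Y\ge\min(p,n)$, whereas the paper only gets~$2$. The comparison also exposes a common limitation: your argument uses crucially that the functions cutting out~$Y$ are ambient coordinates of~$\C^r$, which is special to $n=4$, where $\D_\sigma=2l_{uv}$; for the Question in section~\ref{section_high} the relevant sets are cut out by Cayley--Menger determinants~$\D_{\sigma_j}$, which are not coordinates, so $Y$ is no longer a coordinate section and the projection trick breaks down --- just as Lemma~\ref{lem_estimate} admits no known analogue there.
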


\begin{remark}\label{remark_codim}
Lemma~\ref{lem_codim} is not as trivial as it may seem to be. Actually, it is quite obvious that the intersection~$Y\cap\LL_K$ has codimension~$p$, since we can move each of the points~$P(v_j)$ so that to change arbitrarily the square of length~$l_{uv_j}$ without changing other squares of lengths~$l_{uv_i}$, $i\ne j$. However, it is much harder to deal with points added to~$\LL_K$ under the Zariski closure. For example, the author does not know whether the subset of~$K$ given by $2$ equations~$l_{uv_i}=l_{uv_j}=0$ always has codimension~$2$ or not.
\end{remark}

For the proof of Lemma~\ref{lem_alg} we need the case $n=4$ only. Since $K$ is pure, it must contain at least one $3$-dimensional simplex containing the vertex~$u$. Hence we always have $p\ge 3$. 
Thus
Lemma~\ref{lem_alg} follows immediately from Lemmas~\ref{lem_algebraic} and~\ref{lem_codim}.
The proof of Lemma~\ref{lem_codim} is based on the following two lemmas.

\begin{lem}\label{lem_estimate}
Suppose $P:K\to\C^n$ is a polyhedron and $\varepsilon=\max_{1\le j\le p}|l_{uv_j}(P)|$. Then there exist a number $j$, $1\le j\le p$, and a polyhedron $P':K\to\C^n$ such that $l_{uv_j}(P')=0$ and $|l_{w_1w_2}(P')-l_{w_1w_2}(P)|\le3\varepsilon$ for every edge $[w_1w_2]$ of~$K$.  
\end{lem}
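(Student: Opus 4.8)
The plan is to construct $P'$ by leaving every vertex of $K$ fixed except $u$, moving only the point $P(u)$. Since $v_1,\dots,v_p$ are exactly the vertices joined to $u$ by an edge, moving $P(u)$ alone changes only the squared lengths $l_{uv_1},\dots,l_{uv_p}$; for every edge $[w_1w_2]$ not containing $u$ we automatically have $l_{w_1w_2}(P')=l_{w_1w_2}(P)$, so the desired inequality holds with difference $0$. Thus the whole problem reduces to controlling the $p$ numbers $l_{uv_j}$ while forcing one of them to vanish.

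Write $A=P(u)$, $B_j=P(v_j)$, $\lambda_j=l_{uv_j}(P)$, and let $\langle x,y\rangle=\sum_{i=1}^n x_iy_i$ be the bilinear form, so that $\lambda_j=\langle A-B_j,A-B_j\rangle$ and $|\lambda_j|\le\varepsilon$. I would look for $P'$ with $P'(u)=A+tn$ for a fixed vector $n$ and a scalar $t\in\C$ to be chosen. The key idea is to take $n$ \emph{isotropic}, that is $\langle n,n\rangle=0$. Then
\begin{equation*}
l_{uv_j}(P')=\langle A-B_j+tn,\,A-B_j+tn\rangle=\lambda_j+c_jt,\qquad c_j:=2\langle A-B_j,n\rangle,
\end{equation*}
so that each squared length is an \emph{affine} function of $t$ of degree at most $1$. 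This linearity is exactly what makes the estimate elementary.

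Choosing the index $j_0$ with $|c_{j_0}|$ maximal among $j=1,\dots,p$, setting $t=-\lambda_{j_0}/c_{j_0}$ (which is legitimate provided $c_{j_0}\ne0$), and letting $P'$ be the resulting polyhedron, I get $l_{uv_{j_0}}(P')=0$, while for every $j$
\begin{equation*}
|l_{uv_j}(P')-l_{uv_j}(P)|=|c_jt|=\frac{|c_j|}{|c_{j_0}|}\,|\lambda_{j_0}|\le|\lambda_{j_0}|\le\varepsilon\le 3\varepsilon .
\end{equation*}
So once a good isotropic direction is found, the required bound (in fact $\varepsilon$, better than $3\varepsilon$) comes for free.

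The main obstacle is therefore purely the existence of a suitable $n$: an isotropic vector for which not all $c_j$ vanish, so that $c_{j_0}\ne0$. If $\varepsilon=0$ I simply take $P'=P$, so I may assume $\varepsilon>0$; then some $\lambda_j\ne0$, hence $A\ne B_j$ for that $j$ and the subspace $W=\mathrm{span}\{A-B_j:1\le j\le p\}$ is nonzero. The condition ``all $c_j=0$'' says precisely $n\in W^{\perp}$, and since the form is non-degenerate $\dim W^{\perp}=n-\dim W\le n-1$, so $W^{\perp}$ is a proper subspace. It then remains to observe that over $\C$ (with $n\ge 2$) the isotropic cone $\{n:\langle n,n\rangle=0\}$ is contained in no proper linear subspace: it contains the vectors $e_1+ie_l$ and $e_l+ie_1$ for $l=2,\dots,n$, whose linear combinations already yield all the $e_k$, so the cone spans $\C^n$. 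This is the one place where working over $\C$ rather than $\R$ is essential. Consequently there is an isotropic $n\notin W^{\perp}$, giving $c_{j_0}\ne0$ and completing the construction.
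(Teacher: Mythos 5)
Your proof is correct, but it takes a genuinely different route from the paper. The paper also moves only the point $P(u)$, but it displaces it along the \emph{conjugate} direction $\bar\xi_j$ of the edge vector $\xi_j=P(v_j)-P(u)$ of maximal Hermitian length $h$, and kills $l_{uv_j}$ by solving the quadratic equation $\bar\lambda t^2-2h^2t+\lambda=0$; the estimate then rests on Vieta's relations (the roots satisfy $|\varkappa\varkappa'|=1$ and $|\varkappa|+|\varkappa'|\ge 2h^2/\varepsilon$, so the small root obeys $|\varkappa|\le\varepsilon/h^2$), together with the inequality $\varepsilon\le h^2$, which yields the bound $2\varepsilon+\varepsilon^2/h^2\le3\varepsilon$. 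You instead displace $P(u)$ along an \emph{isotropic} direction, which makes every $l_{uv_j}$ an affine function of the parameter $t$; choosing the index $j_0$ with the largest linear coefficient $|c_{j_0}|$ and solving a linear equation then gives the bound $|c_jt|\le|\lambda_{j_0}|\le\varepsilon$ immediately. Your argument is more elementary (no root estimates), yields the sharper constant $\varepsilon$ in place of $3\varepsilon$ (which suffices, and would in fact streamline the $\varepsilon/5$ bookkeeping in the proof of the next lemma), and isolates the role of $\C$ in a single transparent fact: the isotropic cone of the bilinear form spans $\C^n$, so it misses no proper subspace such as $W^{\perp}$; the non-degeneracy count $\dim W^{\perp}=n-\dim W$ and the degenerate case $\varepsilon=0$ are handled correctly. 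The paper's proof, by contrast, needs no choice of auxiliary direction (its displacement $\bar\xi_j$ is canonical) but pays for this with the quadratic analysis. One purely cosmetic point: you use the symbol $n$ both for the ambient dimension and for the isotropic vector; rename the vector to avoid the clash.
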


\begin{proof}
If for some~$j$, $l_{uv_j}(P)=0$, then $P$ itself is the required polyhedron. So we may assume that $l_{uv_j}(P)\ne 0$, $j=1,\ldots,p$.

The squares of lengths~$l_{w_1w_2}(P)$ are defined by means of the standard bilinear scalar product in~$\C^n$ given by
$$
(\xi,\eta)=\xi^1\eta^1+\cdots+\xi^n\eta^n,
$$
where $\xi=(\xi^1,\ldots,\xi^n)$ and $\eta=(\eta^1,\ldots,\eta^n)$.
Alongside with this bilinear scalar product, we consider the standard Hermitian scalar product
$$
\langle\xi,\eta\rangle=(\bar\xi,\eta)=\bar\xi^1\eta^1+\cdots+\bar\xi^n\eta^n.
$$ 
By $|\xi|$ we define the Hermitian length~$\sqrt{\langle\xi,\xi\rangle}$ of a vector~$\xi$.

We put $\xi_i=P(v_i)-P(u)$, $i=1,\ldots,p$. Let $h=|\xi_j|$ be the maximum of the Hermitian lengths $|\xi_1|,\ldots,|\xi_p|$ and put $\lambda=l_{uv_j}(P)=(\xi_j,\xi_j)$. Let $\varkappa$ and $\varkappa'$ be the roots of the equation
$$
\bar\lambda t^2-2h^2t+\lambda=0
$$
such that $|\varkappa|\le |\varkappa'|$. Then $|\varkappa\varkappa'|=1$ and
$$
|\varkappa|+|\varkappa'|\ge \frac{2h^2}{|\lambda|}\ge \frac{2h^2}{\varepsilon}
$$
It easily follows that $|\varkappa|\le\frac{\varepsilon}{h^2}$. Consider the polyhedron $P':K\to\C^n$ such that $P'(u)=P(u)+\varkappa\bar\xi_j$ and $P'(w)=P(w)$ for every vertex~$w\ne v$. First, we have
$$
l_{uv_j}(P')=(\xi_j-\varkappa\bar\xi_j,\xi_j-\varkappa\bar\xi_j)=\lambda-2h^2\varkappa+\bar\lambda \varkappa^2=0.
$$
Second, for every $i\ne j$, we have
$$
l_{uv_i}(P')-l_{uv_i}(P)=(\xi_i-\varkappa\bar\xi_j,\xi_i-\varkappa\bar\xi_j)-(\xi_i,\xi_i)=-2\varkappa(\bar\xi_j,\xi_i)+\varkappa^2(\bar\xi_j,\bar\xi_j).
$$
Hence,
$$
|l_{uv_i}(P')-l_{uv_i}(P)|\le 2|\varkappa| |\xi_j||\xi_i|+|\varkappa|^2|\xi_j|^2\le 3\varepsilon.
$$
Finally, for every edge~$[w_1w_2]$ such that $w_1\ne u$ and $w_2\ne u$, we have $l_{w_1w_2}(P')=l_{w_1w_2}(P)$.
\end{proof}

\begin{lem}\label{lem_sdvig}
Let $l=(l_{w_1w_2})$ be a point in~$X_K$ such that $l_{uv_i}=0$ for $i=1,\ldots,p$. Let $\varepsilon$ be a positive number. Then there exists a polyhedron~$P:K\to\C^n$ such that
\begin{itemize}
\item $|l_{w_1w_2}(P) -l_{w_1w_2}|\le\varepsilon$ for every edge $[w_1w_2]\in K$;
\item there exists $j$, $1\le j\le p$, such that $l_{uv_j}(P)=0$; 
\item there exists $i$, $1\le i\le p$, such that $l_{uv_i}(P)\ne 0$.
\end{itemize} 
\end{lem}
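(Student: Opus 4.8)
The plan is to approximate $l$ by the length data of an honest polyhedron and then to arrange, by controlled local moves, one edge at $u$ of length exactly zero together with another of nonzero length. Throughout I use $p\ge 2$, as in the setting of Lemma~\ref{lem_codim}. To pass from the point $l\in X_K$ to genuine polyhedra, recall that $X_K$ is the Zariski closure of the image $\LL_K=h(\C^{mn})$ and that $\LL_K$, being the image of a morphism, is constructible; a constructible subset of a complex affine variety is dense in its Zariski closure in the analytic topology as well. Hence $\LL_K$ is analytically dense in $X_K$, so for every $\delta_1>0$ there is a polyhedron $P_0:K\to\C^n$ with $|l_{w_1w_2}(P_0)-l_{w_1w_2}|\le\delta_1$ for all edges $[w_1w_2]\in K$. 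Since the $l_{uv_i}$ all vanish at $l$ by hypothesis, this yields $\max_{1\le i\le p}|l_{uv_i}(P_0)|\le\delta_1$.

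Next I would apply Lemma~\ref{lem_estimate} to $P_0$ with its $\varepsilon$ taken to be $\delta_1$. This produces an index $j$ and a polyhedron $P'$ with $l_{uv_j}(P')=0$ and $|l_{w_1w_2}(P')-l_{w_1w_2}(P_0)|\le 3\delta_1$ for every edge, whence $|l_{w_1w_2}(P')-l_{w_1w_2}|\le 4\delta_1$. Thus, once $\delta_1\le\varepsilon/4$, the polyhedron $P'$ is close to $l$ and carries the exactly vanishing edge $[uv_j]$. If in addition $l_{uv_i}(P')\ne 0$ for some $i$, then $P=P'$ already satisfies all three conditions and we are done.

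The remaining, and only delicate, case is that every edge $[uv_i]$ of $P'$ is isotropic. Here I would choose an index $i_0\ne j$, which exists because $p\ge 2$, and move only the vertex $v_{i_0}$: set $P(v_{i_0})=P'(v_{i_0})+\delta$ and $P(w)=P'(w)$ for $w\ne v_{i_0}$. Writing $\xi=P'(v_{i_0})-P'(u)$ and using $l_{uv_{i_0}}(P')=0$, one computes
\[
l_{uv_{i_0}}(P)=2(\delta,\xi)+(\delta,\delta),
\]
a nonzero polynomial in $\delta\in\C^n$ on account of the term $(\delta,\delta)$; hence arbitrarily small $\delta$ exist for which $l_{uv_{i_0}}(P)\ne 0$. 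Crucially, moving $v_{i_0}$ leaves $[uv_j]$ untouched, since $v_j\ne v_{i_0}$ and $v_j\ne u$, so $l_{uv_j}(P)=0$ is preserved; only the edges incident to $v_{i_0}$ change, each by $O(|\delta|)$. Choosing $|\delta|$ small enough and $\delta_1\le\varepsilon/8$ keeps every edge within $\varepsilon$ of its value at $l$, and this $P$ fulfils all three conditions.

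The main obstacle is precisely the simultaneity demanded by the last two requirements: one must exhibit an edge at $u$ that is \emph{exactly} zero together with another that is \emph{nonzero}, all while staying analytically close to $l$. The resolving observation is that displacing a single neighbour $v_{i_0}$ with $i_0\ne j$ decouples the lengths $l_{uv_{i_0}}$ and $l_{uv_j}$, so that one can open up the former without disturbing the one already forced to zero. The only nonelementary input is the analytic density of the constructible set $\LL_K$ in its Zariski closure $X_K$, on which the initial approximation of $l$ rests.
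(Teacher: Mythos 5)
Your proposal is correct and follows essentially the same route as the paper: approximate $l$ by an actual polyhedron using the analytic density of $\LL_K$ in $X_K$ (the paper gets this from dominance of $h_1$, you from constructibility — the same underlying fact), apply Lemma~\ref{lem_estimate} to force one edge $[uv_j]$ to length exactly zero, and, in the degenerate case where all $l_{uv_i}$ vanish, perturb a single neighbour $v_{i_0}$ with $i_0\ne j$ to open up $l_{uv_{i_0}}$ while leaving $l_{uv_j}=0$ intact. Your explicit computation $l_{uv_{i_0}}(P)=2(\delta,\xi)+(\delta,\delta)$ just makes concrete the paper's appeal to a ``sufficiently small vector in general position.''
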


\begin{proof}
Since $h_1:\C^{mn}\to X_K$ is a dominant morphism of irreducible affine varieties, we see that the set $\LL_K=h_1(\C^{mn})$ contains a Zariski open subset $U\subset X_K$. Hence~$\LL_K$ is dense in the analytic topology on~$X_K$. (The analytic topology is the topology induced by the Hermitian metric in~$\C^r$.) Therefore there exists a point $l'=(l'_{w_1w_2})\in \LL_K$ such that $|l'_{w_1w_2}-l_{w_1w_2}|\le \frac{\varepsilon}{5}$ for every edge~$[w_1w_2]$. Let $P':K\to\C^n$ be a polyhedron such that $l(P')=l'$. Since $l_{uv_i}=0$, we see that $|l_{uv_i}(P')|\le \frac{\varepsilon}{5}$, $i=1,\ldots,p$. By Lemma~\ref{lem_estimate}, there exists a polyhedron $P'':K\to\C^n$ such that $l_{uv_j}(P'')=0$ and $|l_{w_1w_2}(P'')-l_{w_1w_2}(P')|\le\frac{3\varepsilon}{5}$ for every edge $[w_1w_2]$ of~$K$. Then $|l_{w_1w_2}(P'')-l_{w_1w_2}|\le\frac{4\varepsilon}{5}$. If there exists $i$, $1\le i\le p$, such that $l_{uv_i}(P'')\ne 0$, then $P''$ is the required polyhedron. Now assume that $l_{uv_i}(P'')=0$ for every $i$. Choose an arbitrary~$i$ such that $1\le i\le p$ and~$i\ne j$. Consider a new polyhedron $P:K\to\C^n$ such that $P(w)=P''(w)$ for every~$w\ne v_i$ and the point $P(v_i)$ is obtained from the point~$P''(v_i)$ by the shift along a vector~$\eta\in\C^n$. For a sufficiently small vector $\eta$ in general position, we shall obtain that  $l_{uv_i}(P)\ne 0$ and $|l_{v_iw}(P)-l_{v_iw}(P'')|\le\frac{\varepsilon}{5}$ for every vertex~$w$ connected by an edge with~$v_i$. Then $|l_{w_1w_2}(P)-l_{w_1w_2}|\le\varepsilon$ for every edge~$[w_1w_2]$. Besides, $l_{uv_j}(P)=l_{uv_j}(P'')=0$. Hence $P$ is the required polyhedron. 
\end{proof}

\begin{proof}[Proof of Lemma~\ref{lem_codim}]
By $Y_i$ denote the subset of~$X_K$ given by the equation~$l_{uv_i}=0$. Let $Y_i=Y_{i,1}\cup\ldots\cup Y_{i,q_i}$ be the decomposition into irreducible components. Assume that the set $Y=Y_1\cap\ldots\cap Y_p$ contains an irreducible component~$C$ whose codimension in~$X_K$ is equal to~$1$. Then, for every~$i$, $C$ is an irreducible component of~$Y_i$.
Without loss of generality we may assume that $Y_{i,1}=C$, $i=1,\ldots,p$. Then there exists a point $l=(l_{w_1w_2})\in C$ such that $l$ does not belong to the union of all components~$Y_{i,k}$ such that $k>0$. We have $l_{uv_i}=0$, $i=1,\ldots,p$. For $\varepsilon>0$, we denote by~$U_{\varepsilon}(l)$ the set of all points $l'=(l'_{w_1w_2})\in X_K$ such that $|l'_{w_1w_2}-l_{w_1w_2}|\le\varepsilon$ for every edge~$[w_1w_2]\in K$. Since the sets~$Y_{i,k}$ are closed, we see that, for a sufficiently small~$\varepsilon$, the set $U_{\varepsilon}(l)$ does not intersect the union of all~$Y_{i,k}$ such that $k>0$. Therefore, $U_{\varepsilon}(l)\cap (Y_1\cup\ldots\cup Y_p)=U_{\varepsilon}(l)\cap C$. However, by Lemma~\ref{lem_sdvig}, there exists a point $l'\in U_{\varepsilon}(l)$ such that $l'\in Y_j$ and $l'\notin Y_i$ for some~$j$ and~$i$. Hence $l'\in Y_1\cup\ldots\cup Y_p$, but $l'\notin C$. This yields a contradiction, which completes the proof of the lemma.
\end{proof}

\section{Towards higher dimensions}\label{section_high}

We do not know whether the analogues of Theorems~\ref{theorem_main3} and~\ref{theorem_main4} (or, equivalently, the analogues of Theorem~\ref{theorem_alg4}) hold in dimensions greater than~$4$. However, in this section we shall make some progress towards the generalization of our proof to higher dimensions and we shall show which problems arise on this way.

Consider a finite pure $(n-1)$-dimensional simplicial complex~$K$. By $m(K)$ we denote the number of vertices of~$K$. For any $i$-simplex $\tau\in K$ we denote by~$m(\tau,K)$ the number of $(i+1)$-simplices of~$K$ containing~$\tau$. An $(n-4)$-flag of~$K$ is the sequence~$\T$ of simplexes $\tau^0\subset\tau^1\subset\cdots\subset\tau^{n-4}$ of the complex~$K$ such that $\dim\tau^i=i$. By~$\mathcal{F}(K)$ we denote the set of all $(n-4)$-flags of~$K$. We  assign the integral vector 
$$
\m(\T,K)=(m(K),m(\tau^0,K),m(\tau^1,K),\ldots,m(\tau^{n-4},K))
$$
to any $(n-4)$-flag~$\T\in\mathcal{F}(K)$.
Let~$\prec$ be the lexicographic ordering on the set of all vectors $(m_{-1},m_0,\ldots,m_{n-4})\in\Z^{n-2}_{\ge 0}$. This means that 
$$
(m_{-1},m_0,\ldots,m_{n-4})\prec (m_{-1}',m_0',\ldots,m_{n-4}')
$$
whenever there exists $j$ such that   $m_j<m_j'$ and $m_i=m_i'$ for all $i<j$. 
We put 
$$
\m(K)=\min_{\T\in\mathcal{F}(K)}\m(\T,K),
$$
where the minimum is understood with respect to~$\prec$. The vector $\m(K)$ is well defined for any nonempty pure $(n-1)$-dimensional complex~$K$ because any such complex necessarily contains at least one $(n-4)$-flag. An $(n-4)$-flag~$\T\in\mathcal F(K)$ is called \textit{minimal} if $\m(\T,K)=\m(K)$. 
It is convenient to allow the complex~$K$ to be empty and to suppose that $\m(\emptyset)=(0,\ldots,0)$.
It is well known that the ordering~$\prec$ on~$\Z_{\ge 0}^{n-2}$ is complete. Hence we can try to prove the analogue of Theorem~\ref{theorem_alg4} by the induction on~$\m(K)$. 

For every simplex $\sigma\in K$, let $\D_{\sigma}\in\Q[x_K]$ be the Cayley--Menger determinant of the vertices of~$\sigma$.
The following lemma is an analogue of Lemma~\ref{lem_step} for an arbitrary dimension.

\begin{lem}
\label{lem_steph}
Suppose, $n\ge 3$ and $\m\in\Z_{\ge 0}^{n-2}$. Assume that, for any $(n-1)$-cycle~$\underline{Z}$ such that $\m(\supp(\underline{Z}))\prec\m$, the element~$V_{\underline{Z}}$ is integral over the ring~$R_{\supp(\underline{Z})}$. Let $Z$ be an $(n-1)$-cycle with support~$K$ such that $\m(K)=\m$. Let $\T=\{\tau^0\subset\cdots\subset\tau^{n-4}\}$ be a minimal $(n-4)$-flag of~$K$ and let $\sigma$ be an $(n-3)$-dimensional simplex containing~$\tau^{n-4}$. Then for some $s\ge 0$, the element~$\D_{\sigma}^sV_Z$ is integral over~$R_K$. 
\end{lem}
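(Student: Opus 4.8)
The plan is to mimic the non-constructive proof of Lemma~\ref{lem_step} from section~\ref{section_nonconstr}, with the $(n-3)$-simplex~$\sigma$ playing the role of the edge~$[uv]$ and the Cayley--Menger determinant~$\D_\sigma$ playing the role of the multiplier~$l_{uv}$. To~$Z$ and~$\sigma$ I would attach a \emph{link} $Z_\sigma$ as follows: writing $Z=\sum_i q_i[\sigma w_it_i]+A$, where $A$ collects the $(n-1)$-simplices that do not contain~$\sigma$, I set $Z_\sigma=\sum_i q_i[w_it_i]$. Since $\dim\sigma=n-3$, the chain $Z_\sigma$ is one-dimensional, the relation $\partial Z=0$ forces $\partial Z_\sigma=0$ exactly as for $Z_{[uv]}$, and $Z_\sigma\ne 0$ because $\sigma\in K$; thus $q(Z_\sigma)\ge 3$, where $q(Z_\sigma)$ is the sum of the absolute values of the coefficients of the $\sigma$-simplices of~$Z$. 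The outer induction is the one built into the hypothesis of the lemma (on~$\m$ with respect to~$\prec$); for fixed~$\m$ I would run an inner induction on~$q(Z_\sigma)$.

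The combinatorial core is the elementary move. By Lemma~\ref{lem_dircycl} pick a directed simple cycle $w_1,\dots,w_r$ in~$Z_\sigma$; for $j=1,\dots,r-2$ set $\eta_j=\pm[\sigma w_jw_{j+1}w_{j+2}]$, with the orientation chosen so that $(Z_j)_\sigma=Z_\sigma-\partial[w_jw_{j+1}w_{j+2}]$, and put $Z_j=Z-\partial\eta_j$, $K_j=\supp(Z_j)$. (The sign of~$\eta_j$ is immaterial for the volume, since $V_{\partial\eta_j}$ depends only on the squared edge lengths of~$\eta_j$.) Every vertex of~$\eta_j$ lies in~$K$, and for every face $\tau^i$ of the flag each $(i+1)$-simplex of~$\eta_j$ containing~$\tau^i$ is again a simplex of~$K$, being a face either of~$\sigma$ or of some $\sigma*w_k\in K$. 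Hence the move creates no new vertices and increases none of the flag counts: $m(K_j)\le m(K)$ and $m(\tau^i,K_j)\le m(\tau^i,K)$ for all~$i$. If $\tau^{n-4}\in K_j$ then $\T$ is still an $(n-4)$-flag of~$K_j$, and minimality of~$\T$ in~$K$ gives $\m(K_j)\preceq\m(\T,K_j)\preceq\m(\T,K)=\m$; if $\tau^{n-4}\notin K_j$, I take the largest~$i$ with $\tau^i\in K_j$, observe that the absence of~$\tau^{i+1}$ forces $m(\tau^i,K_j)<m(\tau^i,K)$, and extend~$\tau^i$ to an $(n-4)$-flag of~$K_j$ to conclude $\m(K_j)\prec\m$. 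So $\m(K_j)\preceq\m$ always. When $\m(K_j)\prec\m$ the outer induction yields $V_{Z_j}$ integral over~$R_{K_j}$. When $\m(K_j)=\m$ the comparison above forces $\T$ to be minimal in~$K_j$ and $\sigma\in K_j$, while the link identity shows $q\bigl((Z_j)_\sigma\bigr)\in\{q(Z_\sigma)-1,\,q(Z_\sigma)-3\}$, so the inner induction yields $\D_\sigma^{s_j}V_{Z_j}$ integral over~$R_{K_j}$ for some $s_j\ge 0$. In both cases $V_{\partial\eta_j}$ is integral over~$R_{\eta_j}$ by the Cayley--Menger formula~\eqref{eq_CMvol}, and since $\D_\sigma\in R_K$ and all edges of~$K_j$ and of~$\eta_j$ are edges of~$K$ except possibly the diagonal~$[w_jw_{j+2}]$, the element $\D_\sigma^{s_j}V_Z=\D_\sigma^{s_j}(V_{Z_j}+V_{\partial\eta_j})$ is integral over~$R_K[l_{w_jw_{j+2}}]$; equivalently, $V_Z$ is integral over $R_K\bigl[l_{w_jw_{j+2}},\D_\sigma^{-1}\bigr]$. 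For the base case $q(Z_\sigma)=3$ one instead forms $\eta=\pm[\sigma w_1w_2w_3]$, checks directly that $\sigma\notin\supp(Z-\partial\eta)$ and hence $\m(\supp(Z-\partial\eta))\prec\m$, and concludes $V_Z$ integral over~$R_K$ with $s=0$; the subcase $r=3$ is likewise immediate because then $[w_jw_{j+2}]$ is already an edge of~$K$.

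It remains to eliminate the diagonal when $r\ge 4$; this is where $\D_\sigma$ enters and where the place-theoretic Lemmas~\ref{lem_place1} and~\ref{lem_place2} must be generalized. Recall that $\D_\sigma^sV_Z$ is integral over~$R_K$ if and only if $V_Z$ is integral over~$R_K[\D_\sigma^{-1}]$, and by the place criterion it suffices to show that every place~$\varphi$ finite on~$R_K[\D_\sigma^{-1}]$ is finite on~$V_Z$. Such a~$\varphi$ has $\varphi(\D_\sigma)\ne 0,\infty$ and is finite on the squared lengths of all edges of~$K$, in particular on $l_{w_jw_{j+1}}$, on the edges joining the vertices of~$\sigma$ to the~$w_j$ (each such edge lies in $\sigma*w_j\in K$), and on the edges of~$\sigma$. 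The $(n+3)\times(n+3)$ Cayley--Menger determinant of the $n+2$ universal $n$-dimensional points consisting of the $n-2$ vertices of~$\sigma$ together with $w_1,w_2,w_3,w_4$ vanishes identically, since $n+2$ points in~$\R^n$ are always affinely dependent. Dividing two rows and the two matching columns by the relevant squared lengths and passing to the $\varphi$-limit exactly as in the proof of Lemma~\ref{lem_place1}, the four points $w_1,\dots,w_4$ cancel through the four surviving unit entries, and the core left by the cofactor expansion is precisely the Cayley--Menger determinant~$\D_\sigma$; the assumption $\varphi(\D_\sigma)\ne 0$ produces the contradiction. The induction of Lemma~\ref{lem_place2} then gives, for some~$j$, that $\varphi(l_{w_jw_{j+2}})$ is finite, so $\varphi$ is finite on $R_K[l_{w_jw_{j+2}},\D_\sigma^{-1}]$, and therefore on~$V_Z$, which completes the step.

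I expect the principal obstacle to be the last paragraph: one must verify that, after the row and column divisions, the $\varphi$-limit of the $(n+3)\times(n+3)$ Cayley--Menger determinant collapses exactly onto~$\D_\sigma$ and not onto some larger minor, i.e.\ that the four unit entries remove all of $w_1,w_2,w_3,w_4$ while retaining every vertex of~$\sigma$. A secondary but genuinely new point, absent when $n=4$, is the flag bookkeeping of the second paragraph: establishing $\m(K_j)\preceq\m$ and, crucially, the strict drop $\m(K_j)\prec\m$ in the case $\tau^{n-4}\notin K_j$, which is the device that in higher dimensions replaces the elementary ``minimal-degree vertex'' argument used in the proof of Lemma~\ref{lem_step}.
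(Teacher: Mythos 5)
Your proposal is correct and takes essentially the same route as the paper: the paper proves Lemma~\ref{lem_steph} by asserting that the argument is ``completely similar'' to the proof of Lemma~\ref{lem_step}, with the simplex~$\sigma$ in place of the edge~$[uv]$ and with the condition $\varphi(\D_{\sigma})\ne 0$ replacing $\varphi(l_{uv})\ne 0$ in Lemmas~\ref{lem_place1} and~\ref{lem_place2} --- exactly the substitutions you carry out, down to the cofactor expansion collapsing onto~$\D_{\sigma}$. Your flag bookkeeping (the verification that $\m(K_j)\preceq\m$, with a strict drop whenever $\tau^{n-4}$ or~$\sigma$ disappears from~$K_j$) fills in details the paper leaves implicit, but it is the intended argument rather than a different one.
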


The proof of Lemma~\ref{lem_steph} for an arbitrary~$n\ge 3$ is completely similar to the proof of Lemma~\ref{lem_step}. Again, we can give non-constructive and constructive proofs. In the non-constructive proof the only difference is that we should replace the condition $\varphi(l_{uv})\ne 0$ in Lemmas~\ref{lem_place1} and~\ref{lem_place2} by the condition~$\varphi(\D_{\sigma})\ne 0$. In the constructive proof we should replace everywhere $2l_{uv}$ by~$\D_{\sigma}$. The matter is that the coefficients of~$d_1^2d_2^2$ in~\eqref{eq_CM_4d1d2} and of~$d_j^2D_{j+1}^2$ in~(\ref{eq_CMj}${}_j$) become equal to~$\D_{\sigma}$.

If $n=3$, then $\D_{\sigma}=1$, hence, the assertion of Lemma~\ref{lem_steph} is that $V_Z$ is integral over~$R_K$. Therefore Theorem~\ref{theorem_main3} follows inductively and no analogue of Lemma~\ref{lem_alg} is needed. Notice that the obtained proof of Theorem~\ref{theorem_main3} is simpler than the proofs known before because it does not require the induction on the genus of~$K$.
If $n=4$, then $\sigma$ is an edge~$[uv]$ and $\D_{\sigma}=2l_{uv}$. Therefore, Lemma~\ref{lem_steph} turns into Lemma~\ref{lem_step}. 
The multidimensional analogue of Theorems~\ref{theorem_main3} and~\ref{theorem_main4} would follow inductively if we proved the following analogue of Lemma~\ref{lem_alg}: 

\textit{Let $Z$ be an $(n-1)$-cycle with support~$K$. Let $\tau$ be an $(n-4)$-dimensional simplex of~$K$ and let $\sigma_1,\ldots,\sigma_p$ be all $(n-3)$-dimensional simplices of~$K$ containing~$\tau$. Assume that, for every~$j$, there exists a nonnegative integer~$s_j$ such that the element~$\D_{\sigma_j}^{s_j}V_Z$ is integral over~$R_K$. Then the element~$V_Z$ is also integral over~$R_K$.} 

Nevertheless, we do not know whether this assertion is true or not. By Lemma~\ref{lem_algebraic}, this assertion would follow if the answer to the following question were positive.

\begin{quest}
Let $K$ be a finite pure $(n-1)$-dimensional simplicial complex with $r$ edges such that there exists an $(n-1)$-dimensional cycle with support~$K$. Let $\tau$ be an $(n-4)$-dimensional simplex of~$K$ and let $\sigma_1,\ldots,\sigma_p$ be all $(n-3)$-dimensional simplices of~$K$ containing~$\tau$. Does the subset $\{\D_{\sigma_1}=\cdots=\D_{\sigma_p}=0\}\subset X_K$ necessarily has codimension at least~$2$?
\end{quest}

The positive answer to this question for some~$n\ge 5$ would imply the analogue of Theorems~\ref{theorem_main3} and~\ref{theorem_main4} for this dimension~$n$.
As it was already mentioned in section~\ref{section_alg} (see Remark~\ref{remark_codim}) it is easy to show that the codimension of the subset of~$\LL_K$ given by $\D_{\sigma_1}=\cdots=\D_{\sigma_p}=0$ is at least $p$, which is greater than~$1$. However, we cannot prove the same in the Zariski closure of~$\LL_K$. The matter is that we cannot obtain a proper analogue of Lemma~\ref{lem_estimate}.

\end{document}